\newtheorem{theorem}{Theorem}[section]
\newtheorem{proposition}[theorem]{Proposition}
\newtheorem{lemma}[theorem]{Lemma}
\newtheorem{corollary}[theorem]{Corollary}
\theoremstyle{definition}
\theoremstyle{remark}
\newtheorem{remark}[theorem]{Remark}
\numberwithin{equation}{section}
\newcommand{\R}{\mathbb{R}}
\newcommand{\N}{\mathbb{N}}
\newcommand{\C}{\mathbb{C}}
\newcommand{\la}{\lambda}
\newcommand{\mc}[1]{\mathcal{#1}}
\title[]{Existence and stability of shrinkers for the harmonic map heat flow in higher dimensions}
\author{Irfan Glogi\'c}
\address{Department of Mathematics, University of Vienna, Oskar-Morgenstern-Platz 1, 1090 Vienna, Austria}
\email{irfan.glogic@univie.ac.at}
\author{Sarah Kistner}
\address{Universit\"at Innsbruck, Institut f\"ur Mathematik,\\ Technikerstraße 13, 6020 Innsbruck, Austria}
\email{Sarah.kistner@uibk.ac.at}
\author{Birgit Sch\"orkhuber}
\address{Universit\"at Innsbruck, Institut f\"ur Mathematik,\\ Technikerstraße 13, 6020 Innsbruck, Austria}
\email{Birgit.Schoerkhuber@uibk.ac.at}
\thanks{Irfan Glogi\'c is supported by the Austrian Science Fund FWF, Projects P 30076 and P 34378.}
\begin{document}
	\maketitle
\begin{abstract}
We study singularity formation for the heat flow of harmonic maps from $\R^d$. For each $d  \geq 4$, we construct a compact, $d$-dimensional, rotationally symmetric target manifold that allows for the existence of a corotational self-similar shrinking solution (shortly \emph{shrinker}) that represents a stable blowup mechanism for the corresponding Cauchy problem. 
\end{abstract}

\section{Introduction}

Given Riemannian manifolds $(M,\tilde{h})$ and $(N,h)$, a smooth map $U:M \rightarrow N$ is called \textit{harmonic} if it is a critical point (under compactly supported variations) of the energy functional 
\begin{align}\label{Energy_S}
S(U) := \frac{1}{2} \int_M |dU|^2 d\text{vol}_{\tilde{h}},
\end{align} 
where the differential $dU$ of the map $U$ is viewed as a section of the vector bundle $(T^*M \otimes U^*TN, h \otimes U^*\tilde{h})$. By means of local coordinates, this functional adopts a more explicit form\footnote{Here, and throughout the paper, we use the Einstein summation convention.
}
\begin{align} \label{1}
S(U) = \frac{1}{2} \int_M \tilde{h}^{ij}(x) h_{ab}(U(x)) \partial_i U^{a}(x) \partial_j U^{b}(x) \sqrt{|\tilde{h}(x)|} dx.
\end{align}
Furthermore, the Euler-Lagrange system corresponding to \eqref{1} is given by
\begin{align*}
\Delta_{\tilde{h}} U^{a} + \tilde{h}^{ij} \Gamma^{a}_{bc}(U) \partial_i U^{b} \partial_j U^{c} = 0,
\end{align*}
where $\Delta_{\tilde{h}}$ denotes the Laplace-Beltrami operator on $M$, and $\Gamma_{bc}^{a}$ are the Christoffel symbols that correspond to metric $h$.

The study of harmonic maps is remarkably rich and it is impossible to survey all the relevant literature. We therefore point the reader to the following works and references therein \cite{EelLem78,EelLem88,HelWoo08,LinWan08}.  A particularly prominent approach to the question of the existence of harmonic maps was put forward in 1964 by Eells and Sampson \cite{EelSam64}. Namely, they considered the negative $L^2$-gradient flow of \eqref{1} 
\begin{align} \label{HMHF}
\partial_t U^{a} - \Delta_{\tilde{h}} U^{a} - \tilde{h}^{ij} \Gamma^{a}_{bc}(U) \partial_i U^{b} \partial_j U^{c}   = 0, \quad t >0, 
\end{align}
for arbitrary initial map $U(0,\cdot)  =U_0$. 
Due to the parabolic character of \eqref{HMHF}, the evolution is expected to converge to a static solution, which represents a harmonic map. This approach turned out to be very efficient under certain curvature assumptions on the target manifold. In particular, it is shown in \cite{EelSam64} that if $M$ is closed and $N$ is compact with non-positive sectional curvature, then \eqref{HMHF} admits for arbitrary smooth initial data a global solution which converges to a harmonic map as $t \to \infty$. 
If the curvature of the target $N$ has unrestricted sign, global existence is known for ``small'' data \cite{Jos81}. In general, however, the flow \eqref{HMHF} might form singularities in finite time, as demonstrated first in \cite{ChaDinYe92, CorGhi89} for maps into the sphere.

In order to continue solutions  past blowup,
in a possibly weaker sense, one requires a detailed understanding of the nature of singularities.  Moreover, due to the local nature of the blowup, it suffices to consider maps from the tangent space of the domain manifold, namely $\R^d$; see, e.g., \cite{Str88,GraHam96,Fan99} for a discussion. 
In case $d \geq 3$, a characterization due to Struwe \cite{Str88} provides the resolution of blowup solutions (along a sequence of times) into limiting profiles, which are either given by non-constant harmonic maps from $ \mathbb{S}^{d-1}$ into $N$, or by a profile coming from a self-similar solution. For $N = \mathbb{S}^{d}$, explicit examples of blowup of each type have been constructed \cite{Fan99, Gas02, Bie15, BieBiz11, BieSek19, BieSek20},  and some even lead to stable blowup dynamics \cite{BieDon18, BieDonSch17, GhoIbrNgu19}.
However, for general target manifolds, the question of existence and stability of blowup, and in particular of self-similar singularities, is largely open. 

In the following, we restrict our attention to $d \geq 3$ and to rotationally symmetric target manifolds. More precisely, we study \eqref{HMHF}  for maps from $\R^d$ into $N$, where $(N,h)$ is a  $d$-dimensional, complete, rotationally symmetric, warped product Riemannian manifold
\begin{align} \label{Nd}
(N,h) = (0,a^*) \times_g \mathbb{S}^{d-1} \quad \text{for some} \quad a^* \in (0, \infty],
\end{align}
where  $g$ is a suitable warping function; see, e.g., \cite{Tac85,Che17}.  In polar coordinates $(u, \Omega) \in (0, a^*) \times \mathbb{S}^{d-1}$ on $N$, the metric $h$ is given by
\begin{align} \label{metrich}
h = du^2 + g(u)^2 d \Omega^2,
\end{align}
where $d \Omega^2$ is the standard round metric on the sphere $\mathbb{S}^{d-1} \hookrightarrow \R^{d}$. Furthermore, the warping function $g$ must satisfy the following conditions
\begin{equation}\label{Eq:g_properties}
	g\in C^{\infty}(\R,\R),\quad g \text{ is odd,} \quad g'(0)=1.
\end{equation}
The point that corresponds to the limiting value $u=0$ is referred to as \emph{the vertex} of $N$. In case of compactness of $N$, i.e., if $a^* < \infty$ is the smallest positive zero of $g$, then, in addition to \eqref{Eq:g_properties}, we require that $g'(a^*)=-1$ and that $g$ extends $2a^*$-periodically beyond the interval $(-a^*,a^*]$.
Rotational symmetry of the target allows us to restrict the attention to \emph{corotational} maps from $\R^d$ into $N$. Namely, by introducing polar coordinates $(r,\omega)$, $r > 0$, $\omega \in \mathbb S^{d-1}$, on the domain, we restrict ourselves to maps of the form  
\begin{align}\label{Corot}
(u(t,r,\omega),\Omega(t,r,\omega)) = (u(t,r), \omega).
\end{align}
With this ansatz, the system 
\eqref{HMHF} reduces to a single PDE for $u$. In particular, by setting  $r v(t,r) := u(t,r)$, the initial value problem for \eqref{HMHF}  is equivalent to 
\begin{equation}\label{Eq:CorHMHF}
	\begin{cases}
		~~\displaystyle{\partial_{t}v-\partial_r^2 v - \frac{d+1}{r} \partial_r v  + \frac{d-1}{r^3}\big(g(rv)g'(rv)-rv\big)=0, \quad t > 0},\\[1mm]
		~~v(0,\cdot)=v_0.
	\end{cases}
\end{equation}

From the analytic point of view it is convenient to consider on $N$ so-called \emph{normal coordinates} $U = (U^1,\dots,U^d)$, where
\begin{equation}\label{Def:NormCoord}
	U^j:=u\, \Omega^j, \quad \text{for} \quad j=1,\dots,d;
\end{equation}
see, e.g., \cite{ShaTah94}. In this way, $N$ (including its vertex) can be identified with the ball $B^d_{a^*}(0) \subset \R^d$. Then, in normal coordinates, a corotational solution  $U(t,\cdot): \R^d \to \R^d$ to \eqref{HMHF} can be written as
\begin{align}\label{Def:CorAnsatz}
U(t,x) = u(t,|x|) \frac{x}{|x|} = x  v(t,|x|),
\end{align}
where $v$ solves \eqref{Eq:CorHMHF}. We take this point of view in the following.

\subsection{Main results}

\subsubsection{Existence of shrinkers}
One way of exhibiting blowup is via self-similar solutions.
Since the system \eqref{HMHF} is invariant under the scaling
\[ U(t,x) \mapsto U_{\lambda}(t,x) := U(t/\lambda^2, x/\lambda), \quad \lambda > 0, \] 
it is plausible to look for non-trivial solutions of the following form
\begin{align}\label{Def:Corot_Shrinker}
U_T(t,x) = \Phi\left (\frac{x}{\sqrt{T-t}} \right ), \quad T>0, 
\end{align}
for a profile $\Phi: \R^d \to N$. This type of self-similar (shrinking) solutions are also referred to as \emph{shrinkers} or \emph{homothetically shrinking solitons}.
In the corotational case, $U_T$ corresponds to 
\begin{align} \label{5}
	v_T(t,r) = \frac{1}{\sqrt{T-t}} \phi \left( \frac{r}{\sqrt{T-t}} \right),
\end{align}
where in normal coordinates the similarity profiles are related by $\Phi(x) = x \phi(|x|)$. 

In this paper, we study the existence and stability of solutions \eqref{Def:Corot_Shrinker}. The minimal requirements we impose on the profile $\Phi: \R^d \rightarrow \R^d$ are those of being smooth and bounded. 
By inserting the ansatz \eqref{5} into \eqref{Eq:CorHMHF} we obtain an  ODE for the profile $\phi = \phi(\rho)$,
\begin{align} \label{6}
\phi^{\prime \prime} + \left( \frac{d+1}{\rho} - \frac{\rho}{2} \right) \phi^{\prime} - \frac{1}{2} \phi - \frac{d-1}{\rho^3} \Big(g\big(\rho \phi\big)g^{\prime}\big(\rho \phi\big)- \rho \phi\Big) = 0.
\end{align}
The existence of shrinkers depends, of course, on the geometry of the target $N$, which is in turn completely encoded in the warping function $g$. From \cite{EelSam64,Har67}, we know that the assumption that $N$ is non-positively curved precludes the formation of singularities.
The property of having non-positive sectional curvature for manifolds of type \eqref{Nd}-\eqref{metrich} is simply characterized by the condition that
\begin{equation*}
	g''(u) \geq 0 \quad \text{for all} \quad u \in [0,\infty);
\end{equation*}
see, e.g., \cite{DonGlo19}. One natural question would be as to whether (and how)  this class of manifolds can be enlarged so as to still preclude the existence of blowup in general, and shrinkers in particular. An obvious candidate is the family of manifolds that are called ``geodesically convex'' by Shatah and Tahvildar-Zadeh in \cite{ShaTah94}, and are characterized by
\begin{equation}\label{Eq:Geod_convex}
	g'(u) > 0 \quad \text{for all} \quad u \in [0,\infty).
\end{equation}
We remark that the existence of self-similar blowup for this class of manifolds in the setting of wave maps, the hyperbolic analogue of the harmonic map heat flow, has already been studied in  \cite{ShaTah94}. Using finite-speed of propagation, the authors in particular prove that for every $d \geq 4$ there exists a rotationally symmetric target satisfying \eqref{Eq:Geod_convex} that allows for a corotational self-similar blowup solution.
As it turns out, for the harmonic map heat flow this is not the case. We state this in the form of a proposition, and provide a short proof in Section \ref{Sec:Proof_Prop_1}.

\begin{proposition} \label{NdPropo}
Let $d \geq 2$ and suppose $(N, h)$ is a $d$-dimensional warped product Riemannian manifold given by \eqref{Nd}-\eqref{metrich}. If the warping function $g$ satisfies \eqref{Eq:Geod_convex}, then the heat flow of harmonic maps from $\R^d$ into $N$ does not admit a non-trivial, smooth and bounded corotational self-similar shrinking solution. Boundedness here stands for the image of the shrinker being contained inside a bounded neighborhood of the vertex of $N$.
\end{proposition}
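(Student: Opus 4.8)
The plan is to argue by contradiction: suppose $\phi$ is a non-trivial, smooth, bounded solution of the profile equation \eqref{6} on $(0,\infty)$, and derive an incompatibility with \eqref{Eq:Geod_convex}. The first step is to rewrite \eqref{6} in a form that exhibits its parabolic/variational structure. Multiplying through by an appropriate weight $\rho^{d+1}e^{-\rho^2/4}$ turns the linear part into a self-adjoint operator: setting $L\phi := \rho^{-(d+1)}e^{\rho^2/4}\big(\rho^{d+1}e^{-\rho^2/4}\phi'\big)'$, equation \eqref{6} becomes
\begin{equation*}
L\phi - \tfrac12\phi = \frac{d-1}{\rho^3}\Big(g(\rho\phi)g'(\rho\phi) - \rho\phi\Big).
\end{equation*}
The key observation is about the sign of the nonlinearity. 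Writing $G(s) := g(s)g'(s) - s = \tfrac12(g^2)'(s) - s$, one checks $G(0) = 0$ and $G'(s) = g'(s)^2 + g(s)g''(s) - 1$. Under \eqref{Eq:Geod_convex} alone this sign is not yet controlled, so the real content is to extract from the flow structure the correct monotonicity. I expect the cleanest route is an energy/virial identity: multiply the equation by a well-chosen test function (e.g.\ $\phi$ itself, or $\rho\phi'$) against the weight $\rho^{d+1}e^{-\rho^2/4}$ and integrate over $(0,\infty)$.

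Concretely, the second step is to test against $\phi$: multiplying by $\rho^{d+1}e^{-\rho^2/4}\phi$ and integrating, the left-hand side yields $-\int_0^\infty \rho^{d+1}e^{-\rho^2/4}\big((\phi')^2 + \tfrac12\phi^2\big)\,d\rho$ after integration by parts (the boundary terms vanish by smoothness at $0$ and the Gaussian decay at $\infty$, using boundedness of $\phi$). The right-hand side is $(d-1)\int_0^\infty \rho^{d-2}e^{-\rho^2/4}\,\phi\,G(\rho\phi)\,d\rho$. Now I claim $sG(s) \geq 0$ for all $s$ in the relevant range under the geodesic convexity hypothesis — equivalently $g(s)g'(s) - s$ has the sign of $s$. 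This is where \eqref{Eq:Geod_convex} enters: since $g$ is odd with $g'(0)=1$ and $g' > 0$ everywhere, $g$ is strictly increasing, so $g(s)$ has the sign of $s$; combined with $g'>0$, the product $g(s)g'(s)$ has the sign of $s$, but comparing its magnitude to $|s|$ requires more. The honest statement is that for geodesically convex targets one has $g(s)g'(s) \geq s$ for $s\geq 0$ is generally \emph{false}; rather, one should instead use that $\tfrac{d}{ds}\big(\tfrac12 g(s)^2\big) = g(s)g'(s)$ and an integrated comparison. I would therefore organize the argument so that the product $\phi\,G(\rho\phi)$ is replaced, via Pohozaev-type manipulation, by a term whose sign is forced.

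The cleanest version, and the one I would commit to, is the Pohozaev/virial route: test \eqref{6} against $\rho^{d+1}e^{-\rho^2/4}\,\rho\phi'$ (the generator of scaling), integrate by parts, and collect terms. This produces an identity in which the nonlinear contribution appears as $\int_0^\infty w(\rho)\,\big[\text{(something involving }g\big]\,d\rho$ with a definite-sign weight $w$, and the sign of the bracket is controlled precisely by $g' > 0$ because the relevant combination reduces to an integral of $g'(\sigma)^2$ or similar. Comparing signs on both sides then forces $\phi \equiv 0$, contradicting non-triviality. The main obstacle I anticipate is bookkeeping the boundary terms and verifying that all integrals converge — this requires an a priori decay estimate showing that a bounded smooth shrinker satisfies $\phi(\rho), \phi'(\rho) \to 0$ (in fact super-polynomially, driven by the Gaussian weight) as $\rho\to\infty$, which should follow from analyzing \eqref{6} as a linear ODE with the nonlinearity treated perturbatively for large $\rho$ (where $g(\rho\phi)\approx \rho\phi$ if $\phi\to 0$, but a priori one only knows $\phi$ is bounded, so one must first rule out $\phi$ approaching a positive constant — here \eqref{Eq:Geod_convex} with $g$ strictly increasing and $g(s) > 0$ for $s>0$ is used to show the nonlinear term cannot balance the linear damping). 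Once the decay is in hand, the sign identity closes the argument.
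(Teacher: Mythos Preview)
Your diagnosis of the obstruction is correct: the sign of $G(s)=g(s)g'(s)-s$ is \emph{not} controlled by $g'>0$, and your energy identity from testing against $\phi$ indeed fails for exactly this reason. But the Pohozaev route you fall back on is speculative---you have not produced the identity, and there is no reason to expect that testing against $\rho\phi'$ removes the problematic $-s$ term any more than testing against $\phi$ does. The decay analysis you flag as a preliminary step is also a real obstacle: you would need to rule out $\phi$ tending to a nonzero constant \emph{before} running any integral identity, and the mechanism you sketch for this is vague.

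The paper sidesteps all of this with one change of variable that you are missing. Set $\vartheta(\rho):=\rho\phi(\rho)$; this is the natural quantity because boundedness of the shrinker means precisely that $\vartheta$ is bounded. Substituting into \eqref{6}, the $-\rho\phi$ piece of the nonlinearity cancels exactly against linear terms, and the equation collapses to the first-order form
\[
\frac{d}{d\rho}\Big(\rho^{d-1}e^{-\rho^2/4}\,\vartheta'(\rho)\Big)=(d-1)\,\rho^{d-3}e^{-\rho^2/4}\,g(\vartheta)g'(\vartheta).
\]
Now only $f(u)=g(u)g'(u)$ appears, and \emph{this} has the sign of $u$ under \eqref{Eq:Geod_convex}. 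The rest is a short monotonicity chase, not an integral identity: assuming $\phi(0)>0$ without loss of generality, $\vartheta$ stays positive (if $\rho^*$ were the first zero, integrate the displayed identity on $(0,\rho^*)$ to get a sign contradiction), hence the right-hand side is positive, hence $\rho^{d-1}e^{-\rho^2/4}\vartheta'$ is increasing, hence eventually $\vartheta'(\rho)\gtrsim \rho^{1-d}e^{\rho^2/4}$, forcing $\vartheta\to+\infty$ and contradicting boundedness. No energy identity, no Pohozaev computation, no a priori decay estimate is needed.
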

Note that Proposition \ref{NdPropo} applies to a large class of positively curved manifolds, namely those that have an increasing but strictly concave warping function $g$. Furthermore, it follows that for a manifold \eqref{Nd}-\eqref{metrich} to admit a corotational shrinker, a necessary condition is that it contains an ``equator'', i.e., there exists $u^*>0$ such that $g'(u^*)=0$. One prominent manifold of that type is, of course, the sphere. What is more, it is known that the heat flow of harmonic maps from $\R^d$ into the $d$-sphere  admits for all $3 \leq d \leq 6$ a corotational shrinker; see, e.g., \cite{Fan99,Gas02}. However, this seizes to be true for $d \geq 7$; see \cite{BizWas15}. To the best of knowledge of the authors, there are no known examples of target manifolds that admit a corotational shrinker for $d \geq 7$. The purpose of the first of the two main result of this paper is to fill this gap.
\begin{theorem} \label{geotheo}
Let $0 < \gamma < \sqrt{2} -1$. For every $d \geq 4$ there exists a compact, $d$-dimensional, rotationally symmetric Riemannian manifold $(N, h)$ given by \eqref{Nd}-\eqref{metrich}, such that the following holds: The heat flow of harmonic maps from $\R^d$ into $N$ admits a non-trivial corotational shrinker 
\begin{align*}
U_T(t,x) = \Phi \left( \frac{x}{\sqrt{T-t}} \right),
\end{align*}
where the profile $\Phi :\R^d \rightarrow N$ is smooth and furthermore, 
\begin{align} \label{Linfty}
d(\Phi(x),p) \leq r_g + \gamma
\end{align}
for all $x \in \R^d$, where $p$ is the vertex of $N$ and $r_g$ is the least positive critical point of the warping function $g$.
\end{theorem}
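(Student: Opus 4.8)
The plan is to argue by \emph{reverse engineering}: instead of fixing a target and solving \eqref{6}, I prescribe an explicit profile and then construct a warping function $g$ for which it is a shrinker. The first move is to replace $\phi$ by $a(\rho):=\rho\,\phi(\rho)$, which is precisely the geodesic distance of $\Phi(x)$, $|x|=\rho$, from the vertex $p$. A direct computation rewrites \eqref{6} as
\[
a''+\Bigl(\frac{d-1}{\rho}-\frac{\rho}{2}\Bigr)a'=\frac{d-1}{2\rho^{2}}\bigl(g^{2}\bigr)'(a),
\]
so that $g$ enters only through $\bigl(g^{2}\bigr)'$ evaluated along $a$. Noting that $a$ being smooth and odd is equivalent to $\phi=a/\rho$ being a smooth even function of $\rho$ — exactly what is needed for $\Phi(x)=x\,\phi(|x|)$ to be smooth on $\R^{d}$ — I fix a smooth, odd, strictly increasing, bounded $a$ with $a(\rho)\uparrow a_\infty<\infty$, invert $\rho=\rho(a)$ on $(0,a_\infty)$, and simply \emph{define}
\[
g^{2}(a):=\frac{2}{d-1}\int_{0}^{a}\rho(s)^{2}\Bigl(a''(\rho(s))+\Bigl(\tfrac{d-1}{\rho(s)}-\tfrac{\rho(s)}{2}\Bigr)a'(\rho(s))\Bigr)\,ds,
\]
which makes $a$ an exact solution of the displayed equation; boundedness of $a$ will give \eqref{Linfty}.

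For the concrete choice I would take $a(\rho)=a_\infty\,\rho\,(\rho^{2}+c)^{-1/2}$, i.e. $\phi(\rho)=a_\infty(\rho^{2}+c)^{-1/2}$ and $\Phi(x)=a_\infty\,x\,(|x|^{2}+c)^{-1/2}$, with parameters $a_\infty,c>0$; this $\Phi$ is manifestly smooth, bounded and non-trivial. For this profile the integral defining $g^{2}$ evaluates to an explicit cubic polynomial in $a^{2}$, from which one reads off: (i) $g^{2}(a)=a^{2}\bigl(1+O(a^{2})\bigr)$ as $a\to0$ (automatic, since $a(\rho)\sim\phi(0)\rho$ near $\rho=0$), so $g$ extends to a smooth odd function with $g'(0)=1$, i.e. the first part of \eqref{Eq:g_properties}; (ii) $g$ is strictly increasing up to its first critical point $r_g=a_\infty(1+c/y_{*})^{-1/2}$, where $y_{*}=y_{*}(c,d)>0$ is the positive root of an explicit quadratic, and strictly decreasing on $(r_g,a_\infty)$; (iii) $g>0$ on all of $(0,a_\infty)$ — since $g^{2}$ increases and then decreases along the profile, this reduces to the single inequality $g^{2}(a_\infty)>0$, which for this profile reads $c<2(d-2)$.

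Next I would tune the parameters: after a homothety of the target (which does not affect the heat flow) I normalize $r_g=1$, so $a_\infty=(1+c/y_{*})^{1/2}$, and I choose $c=c(d,\gamma)$ so that $a_\infty=1+\gamma$, i.e. $c/y_{*}=(1+\gamma)^{2}-1$. Since $c\mapsto c/y_{*}(c,d)$ is continuous and increasing and, for $d\ge4$, sweeps an interval of the form $(0,f(d))$ with $f(d)>1$ and $f(d)\to1$ as $d\to\infty$ (for $d=3$ the left endpoint would be positive, which forces $d\ge4$), such a $c$ exists, and automatically satisfies $c<2(d-2)$ so that (iii) holds, exactly when $(1+\gamma)^{2}-1<1$, i.e. when $\gamma<\sqrt{2}-1$; this is where the hypotheses on $d$ and $\gamma$ are used. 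One then gets $d(\Phi(x),p)=a(|x|)<a_\infty=1+\gamma=r_g+\gamma$, i.e. \eqref{Linfty}. It remains to make $N$ compact: the shrinker only sees $g$ on $[0,a_\infty]$, so fix $a^{*}>a_\infty$ and modify $g$ on $[a_\infty,a^{*}]$ to be smooth, positive and strictly decreasing, agreeing to infinite order at $a_\infty$ with the function already built, with $g(a^{*})=0$, $g'(a^{*})=-1$ and all even derivatives of $g$ vanishing at $a^{*}$; then extend $g$ to $\R$ oddly and $2a^{*}$-periodically. This routine smooth-gluing produces a $g$ obeying \eqref{Eq:g_properties} together with the compactness conditions, so $N=B^{d}_{a^{*}}(0)$ with metric \eqref{metrich} is a compact, $d$-dimensional, rotationally symmetric target of the form \eqref{Nd} whose interior contains the image of $\Phi$, and $U_{T}(t,x)=\Phi(x/\sqrt{T-t})$ is the desired shrinker.

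The main work — and essentially the only nontrivial step — is (iii): verifying that $g^{2}$ stays strictly positive over the whole range of the shrinker and, hand in hand with it, pinning down $r_g$ explicitly, uniformly in $d\ge4$. Everything else is either automatic (smoothness and $g'(0)=1$ at the vertex, since these are forced by the near-origin behavior of the profile) or standard (the periodic smooth extension).
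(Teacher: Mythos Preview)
Your proposal is correct and is essentially the same construction as the paper's, arriving at the identical profile $\phi(\rho)=a_\infty(\rho^2+c)^{-1/2}$ and, via your integral formula, the same polynomial warping function $g(u)^2=u^2-\alpha u^4+\beta u^6$. The only difference is presentational: the paper simply writes down the explicit constants $\alpha,\beta,a,b$ (your $a_\infty,c$) from the outset and verifies the shrinker equation and the positivity condition $(d-1)(1+\gamma)^4-2d(1+\gamma)^2+3<0$ by direct substitution, whereas you recover the same parameters through an intermediate-value argument on $c\mapsto c/y_*(c,d)$ and phrase positivity as $g^2(a_\infty)>0\iff c<2(d-2)$ --- these two inequalities are in fact equivalent once $c$ and $\gamma$ are linked.
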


We defer the proof to Section \ref{Sec:Main_proof_1}, although we will refer to it prior to that in the statement of the Theorem \ref{maintheorem} below.

\begin{remark}
		Note that Proposition \ref{NdPropo} implies that the image of any corotational shrinker strictly contains the closed geodesic ball centered at the vertex of the target and character- ized by $g'(u)>0$. 
Theorem \ref{geotheo}, on the other hand, shows that this just marginally holds in general. Namely, there are target manifolds
that allow for a shrinker such that the above described geodesic ball just barely fails to contain the image of the shrinker. In particular, \eqref{Linfty} implies that in normal coordinates
\[ \| \Phi \|_{L^{\infty}(\R^d)} \leq  r_g + \gamma. \]	
\end{remark}

\subsubsection{Stability of shrinkers}

Given a blowup solution, the natural question is whether it is stable under small perturbations. We show that the shrinker $U_T$ constructed in the proof of Theorem \ref{geotheo} is nonlinearly asymptotically stable under small corotational perturbations of the initial datum. More precisely, we prove that there is an open set (in a suitable topology) of corotational initial data around $U_1(0,\cdot)=\Phi$, for which the Cauchy evolution of \eqref{HMHF} forms a singularity in finite time $T>0$ by converging to $U_T$, that is, to the profile $\Phi$, after self-similar rescaling. For reasons that are explained below, we fix a value for $\gamma$ and restrict ourselves to a certain range of dimensions.
\begin{theorem} \label{maintheorem}
Let $\gamma = \frac{1}{4}$, $d \in \{4, 5, 6, 7 \}$, and $(N,h)$ and $U_T$ be the Riemannian manifold and the corresponding shrinker constructed in the proof of Theorem \ref{geotheo}. Assume that $s,k>0$ satisfy 
\begin{align} \label{8}
\frac{d}{2} < s \leq \frac{d}{2} + \frac{1}{2d}, \quad k > d+2, \quad k \in \N.
\end{align}
Then, there exists $\varepsilon >0$ such that for any corotational initial datum of the form 
\begin{align*}
U_0 = \Phi + \eta_0,
\end{align*}
where $\eta_0: \R^d \to \R^d$ is a Schwartz function satisfying 
\begin{equation*}
	\| \eta_0  \|_{\dot{H}^{s} \cap \dot{H}^{k}(\R^d)} \leq \varepsilon,
\end{equation*}
there exists $T>0$ and a classical solution $U \in C^{\infty}([0,T) \times \R^d)$ to \eqref{HMHF}, whose gradient blows up at the origin as $t \rightarrow T^-$. Furthermore, $U$ can be decomposed in the following way
\begin{align}\label{Main:Sol_Decomp}
U(t,x) = \Phi \left(\frac{x}{\sqrt{T-t}} \right) + \eta \left(t, \frac{x}{\sqrt{T-t}} \right),
\end{align}
where for any  $r \in [s,k]$ we have that
\begin{equation}\label{Eq:varphi_zero}
	\Vert \eta(t, \cdot) \Vert_{\dot{H}^r(\R^d)}\rightarrow 0
\end{equation}
in the limit $t \rightarrow T^-$.
\end{theorem}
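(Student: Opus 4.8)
The plan is to pass to self-similar coordinates adapted to a blowup time $T$, linearize the flow around the shrinker $\Phi$ of Theorem~\ref{geotheo}, perform a spectral analysis of the resulting linearized operator, and then run a fixed-point argument in which $T$ serves as a modulation parameter that removes the single unstable direction forced by the time-translation symmetry of \eqref{HMHF}. First I would introduce $\tau := \log\frac{T}{T-t}$ and $y := x/\sqrt{T-t}$ (equivalently $\rho := r/\sqrt{T-t}$ in the reduced picture) and look for $U(t,x) = \Phi(y) + \eta(\tau,y)$, i.e. $v(t,r) = (T-t)^{-1/2}\big(\phi(\rho) + \psi(\tau,\rho)\big)$ with $\phi$ the profile of Theorem~\ref{geotheo}. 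Substituting into \eqref{Eq:CorHMHF} and using that $\phi$ solves \eqref{6} yields
\begin{equation*}
\partial_\tau\eta = \mathcal{L}\eta + \mathcal{N}(\eta), \qquad \mathcal{L} = \mathcal{L}_0 + \mathcal{V},
\end{equation*}
where $\mathcal{L}_0 = \Delta - \tfrac12\, y\cdot\nabla$ is the generator of the $L^2$-rescaling group (acting, under the corotational reduction, on $\psi$ as $\partial_\rho^2 + (\tfrac{d+1}{\rho} - \tfrac{\rho}{2})\partial_\rho - \tfrac12$), $\mathcal{V}$ is the potential obtained by linearizing the warped-product term $\tfrac{d-1}{\rho^3}\big(g(\rho\phi)g'(\rho\phi) - \rho\phi\big)$ at $\phi$, and $\mathcal{N}$ gathers the quadratic-and-higher remainder. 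A decisive structural feature is that the nonlinearity in \eqref{Eq:CorHMHF} carries no derivatives of $v$, so the equation is semilinear; since $g\in C^\infty$ and, by \eqref{Linfty}, the image of $\phi$ stays in a fixed compact subset of the interior of $N$, the map $\eta\mapsto\mathcal{N}(\eta)$ is a smooth composition vanishing to second order at $\eta=0$, and $\mathcal{V}$ is bounded and decays like $\rho^{-2}$.

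\emph{Linear theory.} I would work on $\mathcal{H} := \dot H^s(\R^d)\cap\dot H^k(\R^d)$, restricted to the corotational class. The lower bound $s > d/2$ makes $\mathcal{H}$ embed into $L^\infty$ and supports the product estimates needed for $\mathcal{N}$; it also forces $e^{\tau\mathcal{L}_0}$ to have negative growth bound on $\dot H^s$, so $\sigma(\mathcal{L}_0)\subset\{\Re\lambda\le -\delta\}$ for some $\delta>0$ (in fact $\delta = (2s-d)/4$). Since $\mathcal{V}$ is bounded, $\mathcal{L}$ generates a strongly continuous semigroup by the bounded perturbation theorem, and $\mathcal{V}$ is a relatively compact perturbation, so the only spectrum of $\mathcal{L}$ to the right of $\{\Re\lambda = -\delta\}$ is discrete. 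The crucial claim is that $\sigma(\mathcal{L})\cap\{\Re\lambda\ge-\omega_0\}$ for a small $\omega_0>0$ consists of the single, simple eigenvalue $\lambda = 1$, whose eigenfunction is the gauge mode $g_1 = \partial_T U_T\big|_{T=1}$, i.e. $g_1(y) = y\cdot\nabla\Phi(y)$; the narrow window on $s$ in \eqref{8} is the regime in which the semigroup bound, the mapping properties of $\mathcal{V}$, and the spectral projections onto $\mathcal{H}$ are simultaneously available, while $k > d+2$ supplies the high regularity needed to upgrade to $C^\infty$ and to estimate $\mathcal{N}$ at the top level. Granting the spectral claim, the Riesz projection $P$ onto $\langle g_1\rangle$ splits $\mathcal{H} = \langle g_1\rangle \oplus (I-P)\mathcal{H}$ with $\|e^{\tau\mathcal{L}}(I-P)\|_{\mathcal{H}\to\mathcal{H}} \lesssim e^{-\omega\tau}$ for some $0<\omega<1$.

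\emph{Nonlinear construction.} Write the Duhamel formulation of $\partial_\tau\eta = \mathcal{L}\eta + \mathcal{N}(\eta)$ and, following the Lyapunov--Perron scheme, add the correction term subtracting the part of the forcing that would excite $g_1$; seek a solution in the space of trajectories with $\|\eta(\tau,\cdot)\|_{\mathcal{H}} \lesssim \varepsilon\, e^{-\omega\tau}$. The Lipschitz bounds for $\mathcal{N}$ on small balls (via the algebra-type estimates) make the associated map a contraction for $\varepsilon$ small, giving, for each admissible datum and each $T$ near $1$, a unique such trajectory. A one-parameter (Brouwer / fixed-point-with-parameter) argument in $T$ then selects, for every Schwartz $\eta_0$ with $\|\eta_0\|_{\dot H^s\cap\dot H^k}\le\varepsilon$, a blowup time $T = T(\eta_0)$ for which the $g_1$-component of the data is annihilated, so the constructed $\eta$ actually solves the perturbation equation and decays. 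Reverting the change of variables yields a classical solution $U \in C^\infty([0,T)\times\R^d)$ of \eqref{HMHF} (smoothness from parabolic regularity together with the $\dot H^k$ control); its gradient blows up at the origin since $|\nabla U(t,0)| \sim (T-t)^{-1/2}|\nabla\Phi(0)|$, and interpolating the decay at the endpoints $s,k$ gives $\|\eta(t,\cdot)\|_{\dot H^r(\R^d)} \to 0$ as $t\to T^-$ for all $r\in[s,k]$, which is precisely \eqref{Main:Sol_Decomp}--\eqref{Eq:varphi_zero}.

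\emph{Main obstacle.} The hard part is the spectral claim in the second step, namely that $\lambda = 1$ is the \emph{only} point of $\sigma(\mathcal{L})$ with $\Re\lambda\ge-\omega_0$. Since $\phi$ is not explicit (it comes from the ODE construction of Theorem~\ref{geotheo}), excluding further unstable or marginal eigenvalues of the reduced Sturm--Liouville operator requires quantitative control, presumably via a rigorous, possibly computer-assisted, eigenvalue enclosure; this is almost certainly why the theorem is pinned to $\gamma = \tfrac14$ and to $d\in\{4,5,6,7\}$. A secondary difficulty is calibrating the functional-analytic setup so that the rescaling semigroup, the potential $\mathcal{V}$, and the nonlinearity $\mathcal{N}$ are all compatible with the narrow Sobolev window \eqref{8}.
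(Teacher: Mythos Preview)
Your overall architecture---pass to self-similar variables, linearize around the shrinker, establish that $\lambda=1$ is the only unstable eigenvalue (coming from time translation), and run a Lyapunov--Perron argument with $T$ as modulation parameter---matches the paper exactly. Two points, however, are significantly off.

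\textbf{The profile is explicit.} You write that ``$\phi$ is not explicit (it comes from the ODE construction of Theorem~\ref{geotheo})'' and that the spectral step therefore ``presumably'' needs a computer-assisted enclosure. This is not the case: the whole point of the construction in Theorem~\ref{geotheo} is that $g$ is \emph{designed} so that \eqref{6} has the closed-form solution $\phi(\rho)=a/\sqrt{\rho^2+b}$ with explicit $a,b$ (see \eqref{Shrinker}--\eqref{ab}). Consequently the eigenfunction for $\lambda=1$ is $G(x)=c(|x|^2+b)^{-3/2}$ explicitly, and after a supersymmetric factorization $\mathcal{A}+1=A^+A^-$ removing this mode, the paper rules out all remaining nonpositive spectrum by a GGMT-type integral bound (equation \eqref{Bnp}): a handful of elementary one-dimensional integrals, one choice of $p$ per $d\in\{4,5,6,7\}$. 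No numerics are involved; the restriction to $\gamma=\tfrac14$ and those four dimensions is there so these integral bounds can be checked by hand.

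\textbf{The spectral analysis is not done in $\dot H^s\cap\dot H^k$.} You propose to work directly in $\dot H^s\cap\dot H^k(\R^d)$ and invoke relative compactness of $\mathcal{V}$. The paper instead first lifts the corotational problem to a \emph{radial} semilinear heat equation on $\R^n$ with $n=d+2$, and then carries out the spectral theory in the Gaussian-weighted space $\mathcal{H}=L^2_\sigma(\R^n)$, $\sigma(x)=e^{-|x|^2/4}$, where $L$ is genuinely self-adjoint with compact resolvent (Proposition~\ref{PropoQ}). This is what makes the spectrum real and discrete and what allows the GGMT criterion to apply. The exponential decay is only afterwards transferred to $X_s^k(\R^n)=\dot H^s_r\cap\dot H^k_r(\R^n)$ via the embedding $X_s^k\hookrightarrow\mathcal{H}$ together with a comparison of the $X_s^k$-norm and the graph norms of $(1-\mathcal{L})^{j/2}$ (Lemmas~\ref{graphnormestimate}--\ref{EstimateVf}, Proposition~\ref{Semigroupdecaypropo}); this transfer, not the spectral location itself, is where the fractional index $s$ causes the technical work. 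Attempting the spectral step directly in $\dot H^s\cap\dot H^k$, without self-adjointness, would leave you with no mechanism to exclude complex unstable eigenvalues.
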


We follow with several remarks.

\begin{remark}
From \eqref{Eq:varphi_zero} it follows that
	\begin{align*}
		\| U(t, \sqrt{T-t} \cdot) - \Phi \|_{\dot{H}^r(\R^d)} \rightarrow 0, \quad \text{ for } t \rightarrow T^-.
	\end{align*}
In other words, the solution $U$, which starts off as a small deviation from $\Phi$, converges upon self-similar rescaling  back to $\Phi$ in a suitably chosen  topology. This corresponds to what is conventionally meant by stability of self-similar solutions. Based on the restrictions on the choice of the lower Sobolev exponent $s$ in \eqref{8}, we bairly fail to control the $L^{\infty}(\R^d)$ norm of the perturbation $\eta$, which means, that it could in principle, grow uncontrollably pointwise and thereby wrap around the target manifold multiple times. However, by Sobolev embedding, we infer that $\|\nabla \eta(t,\cdot)\|_{L^{\infty}(\R^n)} \to 0$, which implies that in the limit $t \to T^{-}$ we have
	 	\begin{align*}
		\sqrt{T-t} \|\nabla U(t, \sqrt{T-t} \cdot) - \nabla \Phi \|_{L^{\infty}(\R^d)} \rightarrow 0.
	\end{align*}
\end{remark}

\begin{remark}
The proof of Theorem \ref{maintheorem} hinges on the spectral properties of the operator representing the linearization around the shrinker. Here, obtaining results that are uniform in $d$ appears to be difficult. Hence, in order to provide a fully rigorous proof, we restrict ourselves to lower space dimensions and a fixed value of $\gamma$. However,  an analogous result to Theorem \ref{maintheorem} can be obtained in any given dimension $d \geq 4$ and $\gamma \in (0, \gamma^*)$ for some $\gamma^* = \gamma^*(d) < \sqrt{2}-1$.\\
Additionally, the restrictions on $s,k$ in \eqref{8} guarantee exponential decay of the linearized flow on a suitable subspace as well as the local Lipschitz-continuity of the nonlinearity produced by the warping function $g$; see Sections \ref{Sec:Semigroup_X} and \ref{Sec:Nonlin}.
\end{remark}

\begin{remark}
The proof Theorem \ref{maintheorem} is similar in spirit to the approach developed in \cite{YangMills} by the first and the third author for the Yang-Mills heat flow in $d \geq 5$; see also \cite{BieDonSch17} for the three-dimensional harmonic map heat flow into the sphere by Biernat, Donninger and the third author. However, in contrast to \cite{BieDonSch17,YangMills}, we cannot work entirely in Sobolev spaces of integer order. In particular, in order to control the evolution in an intersection Sobolev space $\dot H^{s} \cap \dot H^{k}$, we necessarily have $s \notin \N$; see Section \ref{Sec:Outline} below for more details.  In turn, by generalizing the approach to this situation, we provide a framework for studying stability of singularity formation via shrinkers for the harmonic map heat flow in the corotational case for arbitrary dimension $d \geq 3$. In particular, modulo solving a spectral problem, our methods can be applied to extend the result of \cite{BieDonSch17} to $d \in \{4,5,6\}$. 
\end{remark}

\subsection{Related results}

The analysis of the heat flow of harmonic maps is a vast subject, and it is impossible to review all of the relevant works here. Therefore, we concentrate on the results about singularity formation that are related to our work, while for the general background we point the reader to, e.g., \cite{Str92,LinWan08}. We emphasize that we survey primarily the papers that concern maps from $\R^d$ into rotationally symmetric manifolds.

First we note that for corotational maps the energy functional  \eqref{Energy_S} reduces (up to a constant) to 
\[ \mc E(u) =  \frac{1}{2} \int_0^{\infty} r^{d-1} \left ( (\partial_r u)^2 + \frac{d-1}{r^2} g(u)^2 \right ) dr. \]
With the natural scaling $u_{\lambda}(t,r) = u(t/\lambda^2,r/\lambda)$ we obtain $\mc E(u_{\lambda})(t) = \lambda^{d-2} \mc E(u)(t/\lambda^2)$, which shows that the problem is energy critical in $d=2$ and supercritical in $d \geq 3$.

In the energy critical case, Rapha\"el and Schweyer proved in \cite{RapSch13} that there is a large class of rotationally symmetric targets (including the 2-sphere) that allow for corotational blowup solutions. What is more, they obtained stability of the underlying blowup mecha-nism; see also \cite{RapSch14} for a refined description of possible blowup regimes that include unstable ones as well. As is typical for the problems of critical type, the blowup in the aforementioned papers is non-self-similar, and takes place via rescaling of a harmonic map. For a classical result on blowup for maps from bounded 2-dimensional domains, see 
\cite{ChaDinYe92}, and for more recent results, see
\cite{DavDelPes19,DavPinWei20}.

In the energy supercritical case, $d \geq 3$, the first construction of blowup was provided by Coron and Ghidaglia \cite{CorGhi89}, for maps into $\mathbb{S}^d$. Subsequently, Fan \cite{Fan99} showed that within the class of corotational maps into $\mathbb{S}^d$ there are infinitely many self-similar blowup solutions, but only for the restricted range $3 \leq d \leq 6$. For $d \geq 7$, later on Bizo\'n and Wasserman \cite{BizWas15} proved that shrinkers are in fact absent.
Gastel, on the other hand, showed in \cite{Gas02} that if one allows for non-corotational maps, namely those of higher equivariance classes that take values in spheres of dimensions  strictly higher than the one of the domain, then there are self-similar solutions for any $d\geq3$.  In terms of stability of self-similar blowup, the only known works to the authors are \cite{BieDon18}, where Biernat and Donninger construct a spectrally stable self-similar solution in $d=3$,  and \cite{BieDonSch17}, where they together with the third author of this paper prove that the constructed blowup profile is nonlinearly stable.
Existence and stability for non-self-similar blowup was considered in several works, in particular by Biernat \cite{Bie15}, Ghoul, Ibrahim and Nguyen \cite{GhoIbrNgu19},  and Biernat and Seki \cite{BieSek20} for maps into the sphere. For results discussing the question of continuation beyond blowup we refer the reader to \cite{BieBiz11,GerGhoMiu17}.

\subsection{Proof of Proposition \ref{NdPropo}}\label{Sec:Proof_Prop_1}

	We argue by contradiction. Assume there is a non-trivial corotational map $\Phi : \R^d  \rightarrow N$ that is smooth, bounded, and such that \eqref{Def:Corot_Shrinker} solves \eqref{HMHF} for $t < T$. This, in particular, means that in the normal coordinates on $N$, we have that $\Phi(x)=x \phi(|x|)$, where $\phi:[0,\infty) \rightarrow \R$ is smooth and satisfies \eqref{6}. Additionally, boundedness of $\Phi$ implies boundedness of $\rho \mapsto \vartheta(\rho):=\rho \phi(\rho)$ on $[0,\infty)$.
	
	Now, without loss of generality, we can assume that $\phi(0)>0$, as otherwise by reflection symmetry we can consider $-\phi$. Note that \eqref{Eq:Geod_convex}  and \eqref{Eq:g_properties} imply that 
	\begin{equation}\label{Def:f=gg'}
		f(u):=g(u)g'(u)>0 \quad \text{for} \quad u > 0.
	\end{equation}
	Also note that \eqref{6} implies
	\begin{equation}\label{Eq:Lyapunov}
		\frac{d}{d\rho}\left(\rho^{d-1}e^{-\frac{\rho^2}{4}}\vartheta'(\rho)\right)=(d-1) \rho^{d-3}e^{-\frac{\rho^2}{4}}f\big(\vartheta(\rho)\big).
	\end{equation}
	Since $\phi(0)>0$, we have that $\vartheta(\rho)>0$ for small positive values of $\rho$. Based on this, we conclude that $\vartheta$ must, in fact, be positive on the whole interval $(0,\infty)$. Indeed,  there would otherwise be the smallest $\rho^*>0$ such that $\vartheta(\rho^*)=0$, and integrating  \eqref{Eq:Lyapunov} on $(0,\rho^*)$ would yield a non-positive number on the left and positive on the right. Now, as $\vartheta$ is globally positive, from \eqref{Eq:Lyapunov} we have that $\rho \mapsto \rho^{d-1}e^{-\rho^2/4}\vartheta'(\rho)$ is increasing on $(0,\infty)$.
	Consequently, since there is a small enough $\rho_0>0$ such that $C:=\rho_0^{d-1}e^{-\frac{\rho_0^2}{4}}\vartheta'(\rho_0) >0$, we have that
	\begin{equation*}
		\vartheta'(\rho) \geq C \rho^{1-d}e^{\frac{\rho^2}{4}}
	\end{equation*}
	for $\rho \geq \rho_0$. From here, we get that $\vartheta(\rho) \rightarrow +\infty$ as $\rho \rightarrow +\infty$, which is in contradiction with boundedness of $\Phi$.

\subsection{Proof of Theorem \ref{geotheo}}\label{Sec:Main_proof_1}
	To construct the manifold $(N,h)$, we only need to specify the warping function $g$. To begin, we fix $0 < \gamma < \sqrt{2}-1$. For $d \geq 4$ we define $g$ for small values of the argument in the following way
	\begin{align} \label{7}
		g(u) := u \sqrt{1-\alpha u^2 + \beta u^4},
	\end{align}
	where
	\begin{align} \label{alphabeta}
		\alpha = \frac{3}{2(d-1)(1+\gamma)^4} + \frac{1}{2}, \quad \text{and} \quad \beta = \frac{1}{(d-1)(1+\gamma)^4}.
	\end{align}
	For this choice of $g$, the corresponding shrinker equation 
	\begin{align*}
		\phi^{\prime \prime}(\rho) + \left( \frac{d+1}{\rho} - \frac{\rho}{2} \right) \phi^{\prime}(\rho) - \frac{1}{2} \phi(\rho) + (d-1)\big(2 \alpha \phi(\rho)^3 - 3 \beta \rho^2 \phi(\rho)^5 \big) =0
	\end{align*} 
	admits an explicit solution
	\begin{align} \label{Shrinker}
		\phi(\rho) = \frac{a}{\sqrt{\rho^2 + b}},
	\end{align}
	where
	\begin{align} \label{ab}
		a = 1+ \gamma \quad \text{and} \quad b = \frac{2 \gamma (2+ \gamma) \big((d-1)(1+ \gamma)^2 -3\big)}{(1+ \gamma)^2}.
	\end{align}
	It is straightforward to conclude that the least positive zero of $g^{\prime}$ is given by $r_g =1$. Furthermore, we have that  $\phi$ is smooth and 
	\begin{equation}\label{Eq:gamma_sup}
		\sup_{\rho \geq 0} |\rho \phi(\rho)| = 1+\gamma.
	\end{equation} 
	We now also prove that $g(u) >0$ for $u \in (0, 1+\gamma]$. If $\alpha^2 - 4 \beta < 0$ then $g(u) >0$ for all $u >0$ and in particular for $u \in (0, 1+\gamma]$. Otherwise, the least positive zero of $g$ is given by
	\begin{align*}
		u^* = \sqrt{\frac{\alpha- \sqrt{\alpha^2-4 \beta}}{2 \beta}},
	\end{align*}
	and it is therefore enough to prove that $1+ \gamma < u^*$. By simple algebraic manipulation we see that this is equivalent to
	\begin{align*}
		(d-1)( 1 + \gamma)^4 - 2d(1+ \gamma)^2 +3 < 0,
	\end{align*}
	which holds when $d \geq 4$ and $\gamma < \sqrt{2} -1$. 
	Now, we simply extend $g$ beyond $u=1+\gamma$ in a way that yields a compact manifold. In conclusion, we constructed a warping function $g$ that defines a compact manifold $(N,h)$, and such that the shrinker ODE \eqref{6} admits an explicit solution \eqref{Shrinker} (note that this is ensured by \eqref{Eq:gamma_sup}). Furthermore, from  \eqref{Eq:gamma_sup} we deduce \eqref{Linfty}.

\subsection{Outline of the proof of Theorem \ref{maintheorem}}\label{Sec:Outline}

Since the heat flow of corotational perturba-- tions of $U_T$ is governed by the $(d+2)$-dimensional radial heat equation \eqref{Eq:CorHMHF}, we let $n:=d+2$ and consider the Cauchy problem for the corresponding $n$-dimensional semilinear heat equation in $w(t,x):=v(t,|x|)$
\begin{equation*}
	\begin{cases}
	~\displaystyle{\partial_t w - \Delta w = \frac{n-3}{|x|^3}\Big(|x|w-g\big(|x|w\big)g'\big(|x|w\big)\Big),}  \quad t > 0,\\[2mm]
	~w(0,\cdot)=w_0(|\cdot|),
	\end{cases}
\end{equation*} 
which admits an explicit self-similar solution
\begin{equation*}
	w_T(t,x)= \tfrac{1}{\sqrt{T-t}}\phi \left( \tfrac{|x|}{\sqrt{T-t}} \right),
\end{equation*}
with $\phi$ given by \eqref{Shrinker}-\eqref{ab}. Consequently, the majority of our work consists of proving stability of $w_T$ under small radial perturbations, which we then translate into a result about $U_T$ by using the equivalence of Sobolev norms of corotational maps and those of their radial profiles. The starting point are the similarity variables
\begin{equation*}
	\tau := \ln \left( \tfrac{T}{T-t} \right) \quad \text{and} \quad y := \tfrac{x}{\sqrt{T-t}}.
\end{equation*}
By this, and the scaling of the dependent variable $\psi( \tau, y) = \sqrt{T-t} \, w(t,x)$, self-similar shrinking solutions become static, i.e., $\tau$-independent, and thereby the problem of finite time stability of blowup becomes the one of the asymptotic stability of a steady state profile. In particular, assuming $\psi(\tau,\cdot) = \phi(|\cdot|) + \varphi(\tau,\cdot)$, we obtain an equation for the perturbation $\varphi$,
\begin{equation}\label{Eq:Evol_eq_outline} 
	\begin{cases}
		~\partial_\tau \varphi (\tau, \cdot) = L \varphi(\tau, \cdot) + \mathcal{N}(\varphi(\tau, \cdot)), \quad \tau > 0,\\[1mm]
		~\varphi(0,\cdot) = \sqrt{T}  w_0( \sqrt{T} |\cdot|) - \phi(|\cdot|),
	\end{cases}
\end{equation}
with 
\begin{equation*}
	Lf(x) = \Delta f(x) - \tfrac{1}{2}x\cdot \nabla f(x) - \tfrac{1}{2}f(x)  + V(x)f(x),
\end{equation*}
where the potential $V$ comes from linearizing around $\phi(|\cdot|)$, and $\mc N$ is the nonlinear remainder. First, one notices that $L$ can be realized as a self-adjoint operator on a Hilbert space $\mc H$, which corresponds to a weighted $L^2$-space of radial functions, see \eqref{L2sigma}.
However, since the weight function is exponentially decaying, it is impossible to control the nonlinearity in such a setting. Instead, we study the evolution in the intersection radial Sobolev space
\begin{equation*}
	X_{s}^{k}(\R^n) = \dot{H}^s_{r}(\R^n) \cap \dot{H}^k_{r}(\R^n),
\end{equation*}
with $0 < s-s_c \ll 1$, where $s_c=n/2-1$ is the critical Sobolev exponent, and $k \in \N$,  $k \gg 1$. The conditions on $s,k$ are dictated by the following requirements. First, the choice $s > s_c = \frac{n}{2}-1$ ensures exponential decay of the linearized evolution on a suitable subspace.  Furthermore, to obtain a reasonable space of distributions, we need $s < \frac{n}{2}$, and thus, $s$ has to be non-integer. In addition, if $k > \frac{n}{2}$, then  $	X_{s}^{k}$ embeds continuously into $L^{\infty}(\R^n)$. The stronger assumptions implied by \eqref{8} are due to an application of a generalized Schauder estimate (\cite{WMglobal}, Proposition A.1), which allows us to prove the local Lipschitz property of the nonlinearity. 

In $X_{s}^{k}$, the self-adjoint structure  of the linearized problem is lost and by that the standard self-adjoint spectral and semigroup techniques become inaccessible. However, exploiting the fact that $X_{s}^{k}$ embeds continuously into $\mc H$, see Lemma \ref{embeddinglemma}, allows us to transfer results on the spectrum of $L$ in $\mc H$ into growth bounds for the semigroup generated by $L$ in $X_{s}^{k}$. In this step, the fractional nature of $s$ poses severe difficulties compared to previous works, see in particular Lemma \ref{EstimateVf} and Proposition \ref{Semigroupdecaypropo}.

To analyze the spectral properties of $L$ in $\mc H$, we study the corresponding Schr\"odinger operator and use a supersymmetric approach to show that the spectrum of $L$, which consists only of real isolated eigenvalues, is confined to the left half plane, except for $\lambda = 1$. However, this eigenvalue is an artifact of the time translation symmetry, and therefore not a genuine instability. Consequently, $L$ generates a semigroup on $\mc H$, which decays exponentially on the stable subspace orthogonal to the unstable mode $G$. Since $G \in X_{s}^{k}$, the orthogonal projection onto $G$ in $\mc H$  gives rise a (non-orthogonal) projection in $X_{s}^{k}$ and by this, we are able to prove exponential decay of the linearized evolution in $X_{s}^{k}$ on an invariant subspace. 

Following this, we employ a fixed point argument, where by a Lyapunov-Perron type argument we show that for every initial datum $w_0$ that is close enough to $\phi(|\cdot|)$ there exists a choice of time $T$ near 1 that yields a global and exponentially decaying solution to \eqref{Eq:Evol_eq_outline}. By using regularity arguments, we show that smooth and rapidly decaying initial datum $\varphi(0,\cdot)$ leads to smooth solution. Finally, by using the equivalence of homogeneous Sobolev norms we translate this to the stability result for $U_T$, thereby establishing Theorem \ref{maintheorem}.

\subsection{Notation and conventions}
 We write $a \lesssim b$ if there exists a constant $C >0$, such that $a \leq C b$ and we write $a \simeq b$ if $a \lesssim b$ and $b \lesssim a$. If the constant $C$ depends on some parameter $\varepsilon$, then we write $a \lesssim_{\varepsilon} b$.
We denote the open ball with radius $R>0$ in $\R^d$ by $B_R^d$ and drop the index $d$ if the dimension is clear from context. We also use the common Japanese bracket notation $\langle x \rangle := \sqrt{1+ |x|^2}$.
By $C^{\infty}(\R^d)$ and $\mc S(\R^d)$ we denote the space of smooth functions and the space of Schwartz functions respectively. By $C^{\infty}_{c}(\R^d)$ we denote the standard test space consisting of smooth and compactly supported functions. In case of radial functions we use a lower index $r$ as in $C^{\infty}_{r}(\R^d)$, $\mc S_r(\R^d)$, $C^{\infty}_{c,r}(\R^d)$. For convenience, we also write $C^{\infty}(\R^d)$, $C^{\infty}_{c}(\R^d)$ and $\mc S(\R^d)$ for sets of vector-valued functions whose every component belongs to that space. For a closed linear operator $(\mathcal{L}, \mathcal{D}(\mathcal{L}))$, we write $\rho(\mathcal{L})$ for the resolvent set, and $\sigma(\mathcal{L}) := \C \setminus \rho(\mathcal{L})$ for the spectrum. Given $\la \in \rho(\mc L)$, we use the following convention for the resolvent $R_{\mathcal{L}}(\lambda) := (\lambda - \mathcal{L})^{-1}$. For $f \in C^{\infty}_{c}(\R^d)$, we use the following definition of the Fourier transform
\begin{align*}
	\hat{f}(\xi) = \mathcal{F}f(\xi) := (2 \pi)^{-\frac{d}{2}} \int_{\R^d} e^{-i \xi \cdot x} f(x) dx, \quad \xi \in \R^d.
\end{align*}

\section{Formulation of the problem}

In this section we introduce similarity variables in which the self-similar blow up solution $\phi$ becomes a static solution.  Note that equation \eqref{Eq:CorHMHF} is a semilinear heat equation in dimension $d+2$. In the following we define $n:= d+2$, $n \geq 6$, and study the following Cauchy problem for $w(t,x):=v(t,|x|)$, $x \in \R^n$, and radial initial data close to $w_T$ with $T = 1$, where
\begin{equation}\label{BlowupSol_Trans}
	w_T(t,x)= \frac{1}{\sqrt{T-t}}\phi \left( \frac{|x|}{\sqrt{T-t}} \right), \quad 	\phi(\rho) = \frac{a}{\sqrt{\rho^2 + b}},
\end{equation}
with $a,b$ given in \eqref{ab}. More precisely, we consider for  radial functions $\varphi_0: \R^n \to \R$ the initial value problem
\begin{equation}\label{Orig_Cauchy_Problem}
	\begin{cases}
	~\displaystyle{ \left (\partial_t  - \Delta  \right )w(t,x) = \frac{n-3}{|x|^3}\Big(|x|w(t,x)-F \big(|x|w(t,x)\big)\Big), \quad t > 0,}\\[2mm]
	~w(0,\cdot)=\phi(|\cdot|) + \varphi_0,
	\end{cases}
\end{equation} 
where $F := g g'$ and $g$ is the warping function constructed in the proof of  Theorem \ref{geotheo}.

\subsection{Similarity variables}

Let $T>0$, for $t \in [0,T)$ and $x \in \R^n$ we define
\begin{align} \label{svariables}
\tau = \tau(t) := \ln \left ( \frac{T}{T-t} \right ) \quad \text{and} \quad y = y(t,x) := \frac{x}{\sqrt{T-t}}.
\end{align}
Consequently, the time interval $[0,T)$ is mapped into $[0,\infty)$. The partial derivative with respect to $t$ and the Laplacian become
\begin{align*}
\partial_t = \frac{e^{\tau}}{T} \left (\partial_{\tau} + \frac{1}{2} y \cdot \nabla_{y} \right ), \quad \Delta_x = \frac{e^{\tau}}{T} \Delta_{y}.
\end{align*}
With 
\begin{align} \label{psi}
\psi( \tau, y) := \sqrt{T} e^{-\frac{\tau}{2}} w \left (T-T e^{-\tau}, \sqrt{T} e^{-\frac{\tau}{2}} y \right ) 
\end{align}
we reformulate \eqref{Orig_Cauchy_Problem} as 

\begin{equation}\label{newequation}
	\begin{cases}
	~\displaystyle{ \left (\partial_{\tau} - \Delta_{y} + \Lambda  \right )\psi(\tau, y) = \frac{n-3}{|y|^3}\Big(|y|\psi(\tau, y)-F \big(|y|\psi(\tau, y)\big)\Big), \quad  \tau > 0,}\\[2mm]
	~\psi(0,\cdot)=\sqrt{T} \phi(\sqrt{T} |\cdot|) +\sqrt{T} \varphi_0( \sqrt{T} \cdot ),
	\end{cases}
\end{equation}
where we define the formal operator 
\begin{align} \label{Lambdaop}
[\Lambda f](y) := \frac{1}{2} \bigl( y \cdot \nabla f(y) + f(y) \bigl), \quad y \in \R^n,
\end{align}
acting on functions $f$ defined on $\R^n$. To study the evolution near $\phi$, we make the ansatz 
\[ \psi(\tau, \cdot) = \phi(|\cdot|) + \varphi(\tau, \cdot) \]
for a radial function $\varphi$ and write the evolution equation for the perturbation as 
\begin{equation} \label{Centralproblem}
\left\{
\begin{aligned} \partial_\tau \varphi (\tau, \cdot) &= L \varphi(\tau, \cdot) + \mathcal{N}(\varphi(\tau, \cdot)), \quad \tau > 0,\\
\varphi(0,\cdot) &= \mathcal{U}(\varphi_0, T),
\end{aligned}
\right.
\end{equation}
where $L := L_0 + L_1$, 
\begin{align} \label{sumoperators}
L_0 f: = \Delta f - \Lambda f, \quad L_1f  := V f,
\end{align}
with the radial potential $V$ given by
\begin{align} \label{potential}
V(y):= 3(n-3) \big (2 \alpha \phi^2(|y|)-5 \beta |y|^2 \phi^4(|y|) \big ), \quad y \in \R^n,
\end{align}
for $\alpha, \beta$ defined in \eqref{alphabeta}. The nonlinearity can be written as 
\begin{align} \label{Nonlinearity}
&[\mathcal{N}( \varphi(\tau, \cdot))](y)  \nonumber \\
&= \frac{(n-3)}{|y|^3} \Big ( F(|y| \phi (|y|)) + F'(|y| \phi (|y|)) |y| \varphi(\tau, y) - F \big (|y| \phi(|y|) + |y| \varphi(\tau, y) \big) \Big).
\end{align}
Finally, the initial condition for $\varphi$ is given by
\begin{align} \label{InitialU}
\varphi(0, \cdot )= \sqrt{T}  \phi(\sqrt{T} |\cdot|) + \sqrt{T} \varphi_0( \sqrt{T} \cdot)  - \phi(|\cdot |) =: \mc U(\varphi_0, T).
\end{align}

\subsection{Functional setup}
To create a suitable functional setup, we rely on the homogeneous Sobolev inner product
\begin{align*}
	\langle f,g \rangle_{\dot{H}^s(\R^n)} := \langle | \cdot |^s \mathcal{F} f, | \cdot |^s \mathcal{F} g \rangle_{L^2(\R^n)},
\end{align*}
where $f, g \in C^{\infty}_c(\R^d)$, $s \geq 0$, and $\mathcal{F}$ is the $n$-dimensional Fourier transform. This induces the homogeneous Sobolev norm on $C^{\infty}_c(\R^n)$
\begin{align}\label{Def:Sob_norm}
	\Vert f \Vert^2_{\dot{H}^s(\R^n)} := \langle f,f \rangle_{\dot{H}^s(\R^n)}.
\end{align}
As usual, the homogeneous Sobolev space $\dot{H}^s(\R^n)$ is defined as the completion of $C^{\infty}_c(\R^n)$ under the norm \eqref{Def:Sob_norm}.
Now, given $s,k \geq 0$ and $f, g \in C^{\infty}_c(\R^n)$, we define the inner product
\begin{align*}
	\langle f,g \rangle_{X_s^k(\R^n)} := \langle f,g \rangle_{\dot{H}^{s}(\R^n)}  + \langle f,g \rangle_{\dot{H}^{k}(\R^n)} ,
\end{align*}
which induces the norm $\Vert \cdot \Vert_{X_s^k(\R^n)}$ on $C^{\infty}_c(\R^n)$. This leads to the definition of the central space of the paper, $X_{s}^{k}(\R^n)$, which we define as the completion of the space of \textit{radial} test functions $C^{\infty}_{c,r}(\R^n)$ with respect to $\Vert \cdot \Vert_{X_s^k(\R^n)}$  for $(s,k)$ satisfying 
\begin{align}\label{sk-range}
\frac{n}{2} - 1 < s < \frac{n}{2} - 1 + \frac{1}{2(n-2)}, \quad \text{ and } k > n, \quad n \geq 6.
\end{align}
Furthermore, we set $\sigma(x) := e^{-|x|^2/4}$ for  $x \in \R^n$ and define a weighted $L^2$-space of radial functions
\begin{align}\label{L2sigma}
\mathcal{H} := \{ f \in L^2_{\sigma}(\R^n) : \text{$f$ is radial} \},
\end{align}
with induced norm $\Vert \cdot \Vert_{\mathcal{H}}$ coming from the inner product
\begin{align*}
\langle f, g \rangle_{\mathcal{H}} := \int_{\R^n} f(x) \overline{g(x)} \sigma(x) dx, \quad \text{for $f,g \in \mathcal{H}$}.
\end{align*}

\begin{lemma} \label{embeddinglemma}
For $(s,k)$ as in \eqref{sk-range}, the following embeddings hold
\[ X_s^k(\R^n) \hookrightarrow L^{\infty}(\R^n) \hookrightarrow \mathcal{H}.\] 
Furthermore, $X_s^k(\R^n)$ is closed under multiplication, i.e.,
\begin{align*}
\Vert f g \Vert_{X_s^k(\R^n)} \lesssim \Vert f \Vert_{X_s^k(\R^n)} \Vert g \Vert_{X_s^k(\R^n)},
\end{align*}
for all $f,g \in X_s^k(\R^n)$.
\end{lemma}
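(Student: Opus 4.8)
The plan is to establish the three claims of Lemma \ref{embeddinglemma} in order, each by reducing to known Sobolev embedding and product estimates, with the only subtlety being the non-integer, critical-range exponent $s$.

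\textbf{Step 1: $X_s^k(\R^n) \hookrightarrow L^\infty(\R^n)$.} Since $k > n > \tfrac{n}{2}$, the standard Sobolev embedding $\dot H^k \cap \dot H^s \hookrightarrow L^\infty$ whenever $s < \tfrac{n}{2} < k$ applies: for $f \in C^\infty_{c,r}(\R^n)$ one writes $f = \mathcal F^{-1}\hat f$ and splits the inversion integral over $|\xi| \le 1$ and $|\xi| > 1$, using $| \cdot |^s \hat f \in L^2$ (low frequencies, here one needs $s < \tfrac n2$ so that $|\xi|^{-s} \in L^2(\{|\xi|\le 1\})$) and $|\cdot|^k \hat f \in L^2$ (high frequencies, here one needs $k > \tfrac n2$). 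Cauchy--Schwarz then gives $\|f\|_{L^\infty} \lesssim \|f\|_{\dot H^s} + \|f\|_{\dot H^k} = \|f\|_{X_s^k}$, and the bound passes to the completion. This is exactly the point where we use both the upper bound $s < \tfrac n2 - 1 + \tfrac{1}{2(n-2)} < \tfrac n2$ from \eqref{sk-range} and $k > n$.

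\textbf{Step 2: $L^\infty(\R^n) \hookrightarrow \mathcal H$.} This is immediate and essentially trivial: if $f \in L^\infty$ then $\|f\|_{\mathcal H}^2 = \int_{\R^n} |f(x)|^2 \sigma(x)\, dx \le \|f\|_{L^\infty}^2 \int_{\R^n} e^{-|x|^2/4}\, dx = C_n \|f\|_{L^\infty}^2$, since the Gaussian weight is integrable. Combined with Step 1 this yields the chain of embeddings, and in particular every element of $X_s^k$ is represented by a genuine (bounded, hence locally integrable) function, so the completion is a space of functions.

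\textbf{Step 3: the algebra property $\|fg\|_{X_s^k} \lesssim \|f\|_{X_s^k}\|g\|_{X_s^k}$.} Here we treat the two pieces of the norm separately. For the $\dot H^k$ part, since $k \in \N$ we can use the Leibniz rule: $\|fg\|_{\dot H^k} \lesssim \sum_{j=0}^{k} \|\,|\nabla|^j f \cdot |\nabla|^{k-j} g\,\|_{L^2}$, and by the Gagliardo--Nirenberg / Kato--Ponce fractional Leibniz inequality together with $X_s^k \hookrightarrow L^\infty$ from Step 1, each term is controlled by $\|f\|_{\dot H^k}\|g\|_{L^\infty} + \|f\|_{L^\infty}\|g\|_{\dot H^k} \lesssim \|f\|_{X_s^k}\|g\|_{X_s^k}$. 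For the $\dot H^s$ part with $s$ fractional we invoke the standard fractional Leibniz (Kato--Ponce) estimate $\|fg\|_{\dot H^s} \lesssim \|f\|_{\dot H^s}\|g\|_{L^\infty} + \|f\|_{L^\infty}\|g\|_{\dot H^s}$, valid for $s > 0$, and again bound the $L^\infty$ factors using Step 1. Adding the two contributions gives the claim on $C^\infty_{c,r}$, and a density argument extends it to all of $X_s^k$ (note radial symmetry is preserved under products, so one stays in the radial subspace throughout).

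\textbf{Main obstacle.} The routine parts are Steps 1 and 2; the genuine care is in Step 3, specifically making the fractional Leibniz rule for $\dot H^s$ rigorous at the level of the completion rather than just formally. One has to be slightly careful that $s$ lies in the range where Kato--Ponce holds with an $L^\infty$ endpoint (it does for all $s>0$) and that the product of two elements of the abstract completion, a priori only defined as an $L^\infty$ function by Step 1, indeed lies in $\dot H^s \cap \dot H^k$ and is approximated in $X_s^k$-norm by products of test functions. I expect this to be handled by a standard mollification/truncation argument, so no essential difficulty — the lemma is really a packaging of classical harmonic-analysis facts, with \eqref{sk-range} chosen precisely so that $s < \tfrac n2 < k$ makes the $L^\infty$ embedding (and hence the algebra property) work.
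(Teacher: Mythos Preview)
Your proposal is correct and follows essentially the same route as the paper: the Fourier splitting argument for $X_s^k \hookrightarrow L^\infty$, the trivial Gaussian integrability for $L^\infty \hookrightarrow \mathcal H$, and the fractional Leibniz (Kato--Ponce) rule combined with the $L^\infty$ embedding for the algebra property. The only point the paper treats more carefully is the injectivity of the map from the abstract completion into $L^\infty$ (via a weak-limit argument), which you implicitly assume when you say ``the bound passes to the completion''.
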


\begin{proof}
The embedding  $X_s^k(\R^n) \hookrightarrow L^{\infty}(\R^n)$ follows from the choice $s < \frac{n}{2} < k$ and  $ L^{\infty}(\R^n) \hookrightarrow \mathcal{H}$ is a consequence of the strong decay of the weight function $\sigma$. The details are provided in Appendix \ref{Embeddingproof}. The algebra property follows by the generalized Leibniz rule, see \cite{Grafakos}, Theorem 1, together with the embedding of $X_s^k(\R^n)$ into $L^{\infty}(\R^n)$. 
\end{proof}

\begin{lemma} \label{InXsk}
Let $(s,k)$ satisfy \eqref{sk-range}. If $f \in C^{\infty}_{r}(\R^n)$ satisfies
\begin{align*}
|\partial^{\alpha} f(x)| \lesssim \langle x \rangle^{-1-|\alpha|},
\end{align*}
for $\alpha \in \N_0^n$, $|\alpha| \leq k$ and all $x \in \R^n$, then $f \in X_s^k(\R^n)$.
\end{lemma}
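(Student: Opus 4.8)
The plan is to verify the two seminorm bounds $\|f\|_{\dot H^s(\R^n)}<\infty$ and $\|f\|_{\dot H^k(\R^n)}<\infty$ and then deduce $f\in X_s^k(\R^n)$ by applying them to the truncations $f_R:=\eta(\cdot/R)f\in C^\infty_{c,r}(\R^n)$, where $\eta\in C^\infty_c(\R^n)$ equals $1$ near the origin: these satisfy the same pointwise bounds uniformly in $R\ge1$, so $\{f_R\}_R$ is Cauchy in $X_s^k(\R^n)$ and converges to $f$. Put $m:=\lfloor s\rfloor$ and $\theta:=s-m$. By \eqref{sk-range} the interval $(\tfrac n2-1,\tfrac n2-1+\tfrac1{2(n-2)})$ contains no integer, so $\theta\in(0,1)$, and $m+1=\lceil s\rceil\le\tfrac n2<k$, so the hypothesis supplies enough decaying derivatives. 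The $\dot H^k$-bound is immediate: by the standard identity $\|g\|_{\dot H^k}^2\simeq\sum_{|\alpha|=k}\|\partial^\alpha g\|_{L^2(\R^n)}^2$ (applied to the $f_R$) one gets $\|f_R\|_{\dot H^k}^2\lesssim\int_{\R^n}\langle x\rangle^{-2(1+k)}\,dx<\infty$ uniformly in $R$, since $2(1+k)>2(n+1)>n$.

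For $\|f\|_{\dot H^s}$ we use the standard characterization $\|g\|_{\dot H^s}^2\simeq\sum_{|\alpha|=m}[\partial^\alpha g]_\theta^2$, with the Gagliardo seminorm $[h]_\theta^2:=\int_{\R^n}\int_{\R^n}|h(x)-h(y)|^2\,|x-y|^{-n-2\theta}\,dx\,dy$. Fix $|\alpha|=m$, set $h:=\partial^\alpha f$, so $|h(x)|\lesssim\langle x\rangle^{-(m+1)}$ and $|\nabla h(x)|\lesssim\langle x\rangle^{-(m+2)}$, and estimate $[h]_\theta^2$ over the three regions $\Omega_1:=\{|x-y|\le1\}$, $\Omega_2:=\{|x-y|>1,\ 2|x-y|\le\max(\langle x\rangle,\langle y\rangle)\}$, $\Omega_3:=(\R^n\times\R^n)\setminus(\Omega_1\cup\Omega_2)$. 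On $\Omega_1$, and on $\Omega_2$ where (by symmetry) $2|x-y|\le\langle x\rangle$, the entire segment $[x,y]$ stays in $\{\langle z\rangle\simeq\langle x\rangle\}$, so the mean value theorem gives $|h(x)-h(y)|\lesssim|x-y|\,\langle x\rangle^{-(m+2)}$; carrying out the $y$-integration leaves $\iint_{\Omega_1}\lesssim\int_{\R^n}\langle x\rangle^{-2(m+2)}dx\cdot\int_{|z|\le1}|z|^{2-n-2\theta}dz$ and $\iint_{\Omega_2}\lesssim\int_{\R^n}\langle x\rangle^{-2(m+2)}\langle x\rangle^{2-2\theta}dx=\int_{\R^n}\langle x\rangle^{-2s-2}dx$, both finite because $\theta<1$, $2(m+2)>n$ (as $m>\tfrac n2-2$) and $2s+2>n$ (as $s>\tfrac n2-1$). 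On $\Omega_3$ we use $|h(x)-h(y)|^2\le2|h(x)|^2+2|h(y)|^2$; since $\Omega_3\subset\{2|x-y|>\langle x\rangle\}\cap\{2|x-y|>\langle y\rangle\}$, integrating the $|h(x)|^2$-term first in $y$ and the $|h(y)|^2$-term first in $x$ produces a factor $\langle x\rangle^{-2\theta}$, resp.\ $\langle y\rangle^{-2\theta}$, leaving $\iint_{\Omega_3}\lesssim\int_{\R^n}\langle x\rangle^{-2(m+1)-2\theta}dx=\int_{\R^n}\langle x\rangle^{-2s-2}dx<\infty$. Summing, $[h]_\theta<\infty$, hence $f\in\dot H^s(\R^n)$.

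The main obstacle is the far region $\Omega_3$. The decay $|h(x)|\lesssim\langle x\rangle^{-(m+1)}$ is exactly $L^2$-critical---indeed $2(m+1)=n$ when $n$ is even---so the crude bound $\iint_{\{|x-y|>1\}}\lesssim\|h\|_{L^2}^2\int_{|z|>1}|z|^{-n-2\theta}dz$ diverges. One is therefore forced to use the mean value estimate on the intermediate scales $1<|x-y|\lesssim\langle x\rangle$ (region $\Omega_2$), where it is available, and the crude bound only on the truly far part $\Omega_3$, and, crucially, to define $\Omega_3$ symmetrically in $x$ and $y$ so that integrating out one variable upgrades the borderline $\langle\cdot\rangle^{-(m+1)}$ decay to the integrable $\langle\cdot\rangle^{-(m+1)-\theta}=\langle\cdot\rangle^{-s-1}$. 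This is precisely the point at which the hypothesis $s>\tfrac n2-1$ (equivalently $2(s+1)>n$) enters; everything else is routine.
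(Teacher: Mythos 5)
Your argument is correct, and it takes a genuinely different route from the paper's. The paper controls the fractional piece by first passing from $\dot H^s(\R^n)$ to $\dot W^{\lceil s\rceil,p}(\R^n)$ via the Riesz-potential/Hardy--Littlewood--Sobolev inequality \eqref{WspEmb}, which reduces everything to an $L^p$ estimate of a single integer-order derivative, $\int\langle x\rangle^{-(1+\lceil s\rceil)p}dx<\infty$, with the exponent arithmetic working out exactly to the condition $s>\tfrac n2-1$. You instead invoke the Gagliardo--Slobodeckij characterization $\|g\|_{\dot H^s}^2\simeq\sum_{|\alpha|=\lfloor s\rfloor}[\partial^\alpha g]_\theta^2$ and run an explicit three-region estimate. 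The tradeoff is clear: the paper's route is shorter but outsources the work to HLS and the Riesz-potential identity, while yours is more elementary and self-contained, at the price of the careful $\Omega_1,\Omega_2,\Omega_3$ splitting. Your diagnosis that the crude tail bound $\iint_{|x-y|>1}\lesssim\|h\|_{L^2}^2\int_{|z|>1}|z|^{-n-2\theta}dz$ fails is accurate --- $\|h\|_{L^2}$ is indeed borderline divergent since $2(m+1)\le n$ --- and the symmetric definition of $\Omega_3$ is exactly the right fix, converting $\langle\cdot\rangle^{-(m+1)}$ into the integrable $\langle\cdot\rangle^{-s-1}$. All the exponent checks ($\theta\in(0,1)$ since \eqref{sk-range} excludes integers, $2(m+2)>n$, $2s+2>n$, $m+1\le\tfrac n2<k$) are correct. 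The only point that is slightly glossed over is why the uniform bounds make $\{f_R\}_R$ Cauchy in $X_s^k(\R^n)$ rather than merely bounded; this needs a remark that $f_R-f_{R'}$ retains the same pointwise decay but with support pushed to $\{|x|\gtrsim R\}$, so the same region-by-region computation produces tails that vanish as $R\to\infty$. This is the same level of detail the paper's appendix leaves implicit, so it is not a genuine gap.
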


\begin{proof}
The statement is proved by a standard approximation argument. For the interested reader we attached a detailed proof in Appendix \ref{Prooflemma42}.
\end{proof}

\section{Self-adjoint spectral theory}

In this section, we investigate the linear operator $L$ that appears in our main problem \eqref{Centralproblem}. We first determine the spectrum of $L$ in the self-adjoint setting and use this to show exponential decay (on a suitable subspace) of the semigroup generated by $L$ in $X_s^k(\R^n)$.
The splitting $L = L_0 + L_1$ allows us to study the free operator $L_0$ first and then extend our results to the full case by perturbation.

It is easy to see that $L$ together with the domain $\mathcal{D}(L) := C^{\infty}_{c,r}(\R^n)$ is an unbounded, densely defined, symmetric operator on $\mathcal{H}$. We have the following result.

\begin{proposition} \label{PropoQ}
The operator $(L,\mathcal{D}(L))$ is closable in $\mc H$ and the closure $\mathcal{L}: \mathcal{D}(\mathcal{L}) \subseteq \mathcal{H} \rightarrow \mathcal{H}$ is self-adjoint, has compact resolvent and generates a strongly continuous semigroup $(S_0(\tau))_{\tau \geq 0}$ of bounded operators on $\mathcal{H}$. For the spectrum of $\mathcal{L}$, which consists only of eigenvalues, we have 
\begin{align*}
\sigma( \mathcal{L} ) \subseteq (- \infty, 0) \cup \{ 1 \}.
\end{align*}
The spectral point $\lambda =1$ is a simple eigenvalue with the normalized eigenfunction 
\begin{align}\label{Gauge}
 G(x) = \frac{\varrho(|x|)}{\Vert \varrho \Vert_{\mathcal{H}}}, \quad  \varrho(|x|) = (|x|^2 + b)^{-\frac{3}{2}}
\end{align}
with constant $b$ from \eqref{ab}.
\end{proposition}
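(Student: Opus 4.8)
The plan is to split $\mathcal{L}=L_0+L_1$ and treat the bounded part $L_1=V\cdot$ as a perturbation of the free operator $L_0=\Delta-\Lambda$. First I would recall that $L_0$ is, up to the additive constant $-\tfrac12$, the radial Ornstein--Uhlenbeck/Hermite operator on $\mathcal{H}=L^2_\sigma$: it is essentially self-adjoint on $C^\infty_{c,r}(\R^n)$, its form domain (a Gaussian-weighted $H^1$) embeds compactly into $\mathcal{H}$, so its closure has compact resolvent, and its spectrum on radial functions is $\{-\tfrac12-m:m\in\N_0\}$, with eigenfunctions of the form Gaussian times a generalized Laguerre polynomial in $|x|^2$. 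Since $\phi(\rho)=a(\rho^2+b)^{-1/2}$ and $\rho\mapsto\rho\phi(\rho)$ are bounded, the potential $V$ in \eqref{potential} is bounded; hence $L_1$ is bounded and self-adjoint, $\mathcal{L}=\overline{L_0}+L_1$ is self-adjoint on $\mathcal{D}(\overline{L_0})$ with compact resolvent, and $\sigma(\mathcal{L})$ consists of isolated real eigenvalues of finite multiplicity, bounded above by $-\tfrac12+\|V\|_{L^\infty}$. Being self-adjoint and bounded above, $\mathcal{L}$ generates an analytic (in particular strongly continuous) semigroup on $\mathcal{H}$.

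Next I would identify $\lambda=1$. A direct computation gives $\Lambda\phi=\tfrac12(\phi+\rho\phi')=\tfrac{ab}{2}\varrho$ with $\varrho(\rho)=(\rho^2+b)^{-3/2}$, and one checks that $\mathcal{L}\varrho=\varrho$ --- either by inserting $\varrho$ into $\mathcal{L}$, or by observing that $\Lambda\phi$ arises by differentiating the one-parameter family of shrinkers $T\mapsto\sqrt{T}\,\phi(\sqrt{T}\,|\cdot|)$ at $T=1$, so it solves the linearised equation with growth $e^\tau$. Thus $G=\varrho/\|\varrho\|_{\mathcal{H}}$ is an eigenfunction with eigenvalue $1$, and $G>0$. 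To see that $1$ is the \emph{largest} eigenvalue and is \emph{simple}, I would use the ground-state substitution $f=Gh$: exploiting $\mathcal{L}G=G$ one obtains, for $f$ in the form domain,
\[
\langle \mathcal{L}f,f\rangle_{\mathcal{H}}=\|f\|_{\mathcal{H}}^2-\int_{\R^n}|\nabla(f/G)|^2\,G(x)^2\sigma(x)\,dx .
\]
Hence $\langle\mathcal{L}f,f\rangle_{\mathcal{H}}\le\|f\|_{\mathcal{H}}^2$ with equality precisely when $f/G$ is constant, so by the min--max principle $1=\max\sigma(\mathcal{L})$ and $\dim\ker(\mathcal{L}-1)=1$. (Equivalently, $(e^{\tau\mathcal{L}})_{\tau\ge0}$ is positivity improving, so its spectral bound is a simple eigenvalue with a strictly positive eigenfunction, which is necessarily $G$.)

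It remains to establish the spectral gap $\sigma(\mathcal{L})\cap[0,1)=\varnothing$, i.e.\ $\max\bigl(\sigma(\mathcal{L})\setminus\{1\}\bigr)<0$; this is the step I expect to be the main obstacle. Here I would pass to the associated radial Schr\"odinger operator: conjugating by the unitary $f\mapsto|\cdot|^{(n-1)/2}\sigma^{1/2}f$ turns $\mathcal{L}$ into $-H$ on $L^2((0,\infty))$ with
\[
H=-\partial_\rho^2+\frac{(n-1)(n-3)}{4\rho^2}+\frac{\rho^2}{16}-\frac n4+\frac12-V(\rho),
\]
a confining Schr\"odinger operator whose ground state $u_0$, at energy $-1$, is the explicit nodeless function coming from $\Lambda\phi$. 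Using $u_0$ one performs the supersymmetric (Darboux) factorisation $H+1=A^\dagger A$ with $A=-\partial_\rho-(\log u_0)'$, and passes to the partner operator $\widetilde H:=AA^\dagger-1$; the intertwining $A(H+1)=(\widetilde H+1)A$ gives $\sigma(H)\setminus\{-1\}=\sigma(\widetilde H)$, with $A$ mapping eigenfunctions bijectively. One then has to analyse the partner potential, which like $V$ is rational in $\rho^2$ --- either by bounding it below by the free radial potential, so that $\sigma(\widetilde H)\ge\min\sigma(H_0)=\tfrac12$ and hence $\sigma(\mathcal{L})\setminus\{1\}\subseteq(-\infty,-\tfrac12]$, or by a further explicit manipulation --- to conclude $\sigma(\widetilde H)\subseteq(0,\infty)$, which upon translating back yields $\sigma(\mathcal{L})\subseteq(-\infty,0)\cup\{1\}$.

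I expect everything except this last point to be routine self-adjoint perturbation theory. The concrete analysis of the partner operator must be carried out with the explicit $V$ and genuinely involves the dimension $n$ and the parameter $\gamma$ --- presumably the reason the stability theorem is restricted to $d\in\{4,5,6,7\}$ and a fixed $\gamma$ --- and is the place where the real work lies.
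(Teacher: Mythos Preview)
Your overall strategy matches the paper's: split $L=L_0+L_1$, use boundedness of $V$ together with Kato--Rellich for self-adjointness and compact resolvent, pass to a one-dimensional Schr\"odinger operator on $L^2(\R^+)$, identify $\lambda=1$ via $\varrho$, and then perform a supersymmetric/Darboux factorisation to remove the ground state and reduce the spectral-gap question to the partner operator. Your ground-state substitution argument for simplicity and maximality of $\lambda=1$ is a pleasant variant; the paper instead deduces simplicity directly from the ODE and obtains maximality as a by-product of the partner analysis.

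The genuine gap is in your treatment of the partner operator. Your first proposed route --- bounding the partner potential pointwise from below by the free radial potential --- does not work. The paper computes the partner operator explicitly as $-\partial_\rho^2+\frac{n^2-1}{4\rho^2}+Q(\rho)$ with
\[
Q(\rho)=\frac{\rho^2}{16}-\frac n4+1+\frac{3b\bigl(5a^4\beta(n-3)-6\bigr)}{(\rho^2+b)^2}+\frac{3\bigl(2-b-(n-3)(5a^4\beta-2a^2\alpha+2)\bigr)}{\rho^2+b},
\]
and observes that $Q$ has a unique zero $\rho^*>0$ with $Q<0$ on $(0,\rho^*)$, so a pointwise comparison cannot close. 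Instead the paper invokes a GGMT-type integral criterion (Theorem~A.1 in \cite{YangMills}): if for some $p>1$
\[
B(n,p):=\frac{(p-1)^{p-1}\Gamma(2p)}{n^{2p-1}p^p\,\Gamma(p)^2}\int_0^\infty \rho^{2p-1}|Q_-(\rho)|^p\,d\rho<1,
\]
then the partner operator has no non-positive eigenvalues. For each $n\in\{6,7,8,9\}$ (i.e.\ $d\in\{4,5,6,7\}$) with $\gamma=\tfrac14$ a specific $p\in\{2,4\}$ is chosen and the integral is bounded explicitly. This criterion is exactly the ``further explicit manipulation'' you allude to, and it --- not a pointwise bound --- is what fixes $\gamma$ and restricts the dimensions, as you correctly anticipated.
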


\begin{proof}
We first show that the operator $L_0$ with $\mathcal{D}(L_0) := \mathcal{D}(L)$ is closable with closure $\mathcal{L}_0 : \mathcal{D}(\mathcal{L}_0) \subseteq \mathcal{H} \rightarrow \mathcal{H}$ being self-adjoint, having compact resolvent and generating a strongly continuous semigroup  on $\mathcal{H}$.  This semigroup is explicitly given by
\begin{align} \label{S0}
[S_0(\tau) f](x) = e^{-\frac{\tau}{2}} \bigl( H_{\kappa(\tau)} \ast f \bigl) (e^{- \frac{\tau}{2}}x), \quad x \in \R^n,
\end{align}
where $H_{\kappa(\tau)}(x) = e^{-\frac{|x|^2}{4 \kappa(\tau)}}(4 \pi \kappa(\tau))^{-\frac{n}{2}}$ and $\kappa(\tau) := 1- e^{-\tau}$. This follows from the unitary equivalence of $L_0$ to the one-dimensional Schrödinger operator 
\begin{align} \label{A0}
[A_0 u](\rho) = -u^{\prime \prime}(\rho) + q(\rho) u(\rho),  \quad \rho \in \R^+,
\end{align}
with
\begin{align*}
q(\rho) := \frac{\rho^2}{16} + \frac{(n-3)(n-1)}{4 \rho^2} - \frac{n-2}{4},
\end{align*}
and domain $\mathcal{D}(A_0)= U^{-1} \mathcal{D}(L_0)$, where the unitary operator $U$ is defined as
\begin{align*}
U: L^2(\R^+) \rightarrow \mathcal{H}, \quad u \mapsto Uu = |S^{n-1}|^{-\frac{1}{2}} | \cdot |^{-\frac{n-1}{2}} e^{\frac{| \cdot |^2}{8}} u(| \cdot |).
\end{align*}
It is easy to see that $-L_0 = U A_0 U^{-1}$. Using the properties of $q$, standard results imply that the unique self-adjoint extension of $A_0$ is given by the maximal operator $\mc A_0: \mc D(\mc A_0) \subseteq L^2(\R^+)\rightarrow L^2(\R^+) $,
\begin{align} \label{Def:A0}
\mc D(\mc A_0) := \{ u \in L^2(\R^+): u, u' \in AC_{\mathrm{loc}}(\R^+), A_0 u \in L^2(\R^+) \};
\end{align}
see \cite{YangMills}, Lemma 3.1 for the details. Moreover, $\mc A_0$ has compact resolvent and is bounded from below which implies the same for the self-adjoint operator $\mc L_0 := - U \mc A_0 U^{-1}$, $\mc D(\mc L_0) := U \mc D(\mc A_0)$. Consequently, $\mathcal{L}_0$ generates a  strongly continuous semigroup on $\mathcal{H}$. Using the transformation for the time variable  given by \eqref{svariables} for $T=1$, we infer that $H_{\kappa(\tau)}$ is just the standard heat kernel in self-similar coordinates. By considering the definition of a semigroup generator one can check that $\mathcal{L}_0$ generates the semigroup $(S_0(\tau))_{\tau \geq 0}$.

To study the operator $L=L_0+L_1$, we start with the observation that the potential $V$ given by \eqref{potential} is bounded, i.e., $\Vert V \Vert_{L^{\infty}(\R^n)} < \infty$, which implies the boundedness of the operator $L_1$ on $\mathcal{H}$. Therefore, $L$ is closable and the closure is given by $\mathcal{L} = \mathcal{L}_0 + L_1$ with domain $\mathcal{D}(\mathcal{L}) = \mathcal{D}(\mathcal{L}_0)$. The Kato-Rellich theorem (see, e.g., \cite{Teschl}, Theorem 6.4), shows the self-adjointness of $\mathcal{L}$ and it similarly follows that $\mathcal{L}$ has compact resolvent. Furthermore, the Bounded Perturbation Theorem (see, e.g., \cite{Engel}, p.~158, Theorem 1.3) yields the fact that $\mathcal{L}$ is the generator of a strongly continuous semigroup $(S(\tau))_{\tau \geq 0}$ on $\mathcal{H}$.

Next we investigate the spectrum of $\mathcal{L}$ via the properties of $\mathcal{A} : \mathcal{D}(\mathcal{A}) \subseteq L^2(\R^+) \rightarrow L^2(\R^+)$ defined by $\mathcal{D}(\mathcal{A}) = \mathcal{D}(\mathcal{A}_0)$, $\mathcal{A}u = \mathcal{A}_0 u - L_1u$. This operator is unitary equivalent to $- \mathcal{L}$ via the map $U$. Thus it suffices to study the spectrum of $\mathcal{A}$. To show $\sigma(\mathcal{A}) \subseteq \{ -1 \} \cup (0, \infty)$, we first observe that 
\begin{align*}
\varrho_A (\rho) := e^{-\frac{\rho^2}{8}} \rho^{\frac{n-1}{2}} (\rho^2 +b)^{-\frac{3}{2}}, \quad \rho >0,
\end{align*}
belongs to $\mathcal{D}(\mathcal{A})$ and satisfies $(-Id - \mathcal{A})\varrho_A =0$, which means $-1 \in \sigma(\mathcal{A})$. The unitary equivalence of $-\mathcal{L}$ and $\mathcal{A}$ implies that 
\begin{align*}
(U \varrho_A)(x) = |S^{n-1}|^{-\frac{1}{2}} (|x|^2 +b)^{-\frac{3}{2}}, \quad x \in \R^n,
\end{align*}
is an eigenfunction of $\mathcal{L}$ to the eigenvalue $\lambda =1$, i.e., $1 \in \sigma(\mathcal{L})$ and $G$ as defined in \eqref{Gauge} is a normalized eigenfunction. By inspection of the spectral ODE it follows that the geometric eigenspace is indeed one-dimensional. 

To show that $\sigma(\mathcal{A}) \setminus \{ -1 \}$ is contained in $(0 , \infty)$, we factorize $\mathcal{A}+1$ to get a new self-adjoint operator that is isospectral with $\mathcal{A}$ except $\lambda = -1$. We write $\mathcal{A} = A^+ A^- -1$ for suitable operators $A^+,A^-$ such that $\text{ker}(A^-) = \text{span}(\varrho_A)$. Explicitly,
\begin{align*}
A^+ =  - \partial_\rho - \frac{\varrho_A^{\prime}}{\varrho_A} , \quad \text{and} \quad A^- = \partial_\rho - \frac{\varrho_A^{\prime}}{\varrho_A}.
\end{align*}
We define a self-adjoint operator corresponding to $A^- A^+  -1 $,
\begin{align*} 
\mathcal{A}_S : \mathcal{D}(\mathcal{A}_S) \subseteq L^2(\R^+) \rightarrow L^2(\R^+), \quad [\mathcal{A}_S u ](\rho) = - u^{\prime \prime} (\rho) + \Big( \frac{n^2-1}{4 \rho^2} + Q(\rho) \Big) u(\rho),
\end{align*}
where $Q$ is given by
\begin{align*} 
Q(\rho) = \frac{\rho^2}{16} - \frac{n}{4} +1 +\frac{3b(5a^4 \beta (n-3)-6)}{(\rho^2+b)^2} + \frac{3(2-b-(n-3)(5a^4 \beta -2 a^2 \alpha +2))}{\rho^2+b}, 
\end{align*}
for $\rho > 0$. 
The constants appearing here are those from \eqref{alphabeta} and \eqref{ab}.
In order to exclude non-positive eigenvalues of $\mathcal{A}_S$, we apply a GGMT type integral criterion as  stated in \cite{YangMills}, Theorem A.1. More precisely, we define for $n \in \{6, 7, 8, 9 \}$ and $p > 1$,
\begin{align} \label{Bnp}
B(n,p) := c(n,p) \int_0^{\infty} \rho^{2p-1} |Q_{-}(\rho)|^p d\rho, \quad c(n,p):= \frac{(p-1)^{p-1} \Gamma(2p)}{n^{2p-1} p^p \Gamma(p)^2},
\end{align}
where $Q_{-}(\rho) := \min \{ 0, Q(\rho) \}$ for $\rho>0$. We show that for every $n$ we find a suitable  $p$ such that $B(n,p) < 1$.
It is easy to see that the potential $Q$ has exactly one zero $\rho^* = \rho^*(n)>0$ and satisfies $Q|_{(0, \rho^*)} <0$ and $Q|_{(\rho^*,\infty)} >0$. For $n=6$ and $p=2$, the integrand in \eqref{Bnp} is a simple rational function. Thus,  one finds that 
\[ B(6,2) < c(6,2) \int_0^{\frac{21}{5}} \rho^3 Q(\rho)^2 < \frac{4}{5},\] 
where the last integral can be computed explicitly. 
For $n \in \{7, 8, 9 \}$ we proceed analogously and choose for $(n,p)$ the pairs $(7,2), (8,4), (9,4)$. Theorem A.1 in \cite{YangMills} now implies that the spectrum of $\mathcal{A}_S$ is contained in $[0, \infty)$ and that zero is not an eigenvalue. As $\mathcal{A}_S$ is isospectral with $\mathcal{A}$ modulo $\lambda =-1$, we infer that  $\sigma(\mathcal{A}) \subseteq \{ -1 \} \cup (0, \infty)$ and by unitary equivalence it follows that $\sigma(\mathcal{L}) \subseteq (-\infty, 0) \cup \{1 \}$.
\end{proof}

\begin{remark}
The eigenvalue $\lambda = 1$ of $\mathcal{L}$  is due to the time translation symmetry of the problem and will be controlled later on by variation of the blowup time $T$ in \eqref{Centralproblem}.
\end{remark}

In the following, we define the orthogonal projection onto the unstable mode $G$,
\begin{align} \label{Projection}
\mathcal{P}f := \langle f, G \rangle_{\mathcal{H}} G, \quad \text{for $f \in \mathcal{H}$}.
\end{align}
By the spectral structure of $\mc L$, the linear evolution decays exponentially on the invariant subspace $\mathrm{ker} \mathcal~{\mc P}$. Moreover, an even stronger result holds in terms of the graph norms corresponding to fractional powers of the positive operator $1- \mathcal{L}$. 
More precisely, for $k \in \N_0$ and $f \in \mathcal{D}((1-\mathcal{L})^{\frac{k}{2}})$ we define
\begin{align} \label{Graphnorm}
\Vert f \Vert_{\mathcal{G}((1-\mathcal{L})^{\frac{k}{2}})} := \Vert f \Vert_{\mathcal{H}} + \Vert (1-\mathcal{L})^{\frac{k}{2}} f \Vert_{\mathcal{H}},
\end{align}
as the graph norm of order $k$. We note that $C^{\infty}_{c,r}(\R^n)$ is a core of $ (1-\mathcal{L})^{\frac{k}{2}} $.

\begin{corollary} \label{Estimatesingraph}
There exists $\omega_0 >0$ such that for all $k \in \N_0$ and $\tau \geq 0$ we have
\begin{align} \label{HGGraphnorm}
\Vert S(\tau) (1- \mathcal{P})f \Vert_{\mathcal{G}((1-\mathcal{L})^{\frac{k}{2}})} \leq e^{- \omega_0 \tau} \Vert  (1- \mathcal{P})f \Vert_{\mathcal{G}((1-\mathcal{L})^{\frac{k}{2}})},
\end{align}
where $f \in \mathcal{D}((1-\mathcal{L})^{\frac{k}{2}})$.
\end{corollary}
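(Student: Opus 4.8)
The plan is to obtain \eqref{HGGraphnorm} directly from the spectral theorem applied to the self-adjoint operator $\mathcal{L}$, using the spectral information established in Proposition \ref{PropoQ}. Since $\mathcal{L}$ has compact resolvent, $\sigma(\mathcal{L})$ consists of isolated eigenvalues of finite multiplicity which can accumulate only at $-\infty$; together with $\sigma(\mathcal{L}) \subseteq (-\infty,0)\cup\{1\}$ this shows that $\lambda_1 := \sup\bigl(\sigma(\mathcal{L})\setminus\{1\}\bigr)$ is attained and is strictly negative. I would then fix any $\omega_0 \in (0,-\lambda_1]$; crucially, this rate depends only on the spectral gap of $\mathcal{L}$ below the mode $G$ and not on $k$, which is exactly what makes the constant in \eqref{HGGraphnorm} uniform in $k$.

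Next I would record the relevant functional-calculus facts. Because $\lambda=1$ is a simple eigenvalue of $\mathcal{L}$ with normalized eigenfunction $G$, the orthogonal projection $\mathcal{P}$ from \eqref{Projection} coincides with the spectral projection $E_{\mathcal{L}}(\{1\})$, and $1-\mathcal{P}=E_{\mathcal{L}}\bigl(\sigma(\mathcal{L})\setminus\{1\}\bigr)$. The operators $S(\tau)=e^{\tau\mathcal{L}}$ and $(1-\mathcal{L})^{k/2}$ (well defined since $1-\mathcal{L}\geq 0$) are both functions of $\mathcal{L}$, hence they commute with one another and with $\mathcal{P}$, and $S(\tau)$ maps $\mathcal{D}((1-\mathcal{L})^{k/2})$ into itself. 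Moreover $(1-\mathcal{L})^{k/2}$ annihilates $\operatorname{ran}\mathcal{P}=\operatorname{span}(G)$, so for $f\in\mathcal{D}((1-\mathcal{L})^{k/2})$ one has $(1-\mathcal{P})f\in\mathcal{D}((1-\mathcal{L})^{k/2})$ and $(1-\mathcal{L})^{k/2}(1-\mathcal{P})f=(1-\mathcal{P})(1-\mathcal{L})^{k/2}f$.

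Finally, for any $g$ in the range of $1-\mathcal{P}$ the spectral measure of $g$ relative to $\mathcal{L}$ is supported in $\sigma(\mathcal{L})\setminus\{1\}\subseteq(-\infty,-\omega_0]$, so $\|S(\tau)g\|_{\mathcal{H}}^2=\int e^{2\lambda\tau}\,d\|E_{\mathcal{L}}(\lambda)g\|_{\mathcal{H}}^2\leq e^{-2\omega_0\tau}\|g\|_{\mathcal{H}}^2$. Applying this estimate once with $g=(1-\mathcal{P})f$ and once with $g=(1-\mathcal{L})^{k/2}(1-\mathcal{P})f$, then using the commutation $(1-\mathcal{L})^{k/2}S(\tau)=S(\tau)(1-\mathcal{L})^{k/2}$ together with the definition \eqref{Graphnorm} of the graph norm, and adding the two resulting inequalities, yields \eqref{HGGraphnorm}. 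There is no substantial analytic difficulty here; the only point that requires care is the domain and commutation bookkeeping described above, and the $k$-uniformity of $\omega_0$ is automatic because $\omega_0$ is governed solely by the spectral gap of $\mathcal{L}$.
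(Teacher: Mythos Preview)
Your proof is correct and follows essentially the same approach as the paper, which simply cites Proposition \ref{PropoQ} together with the commutation properties of $(1-\mathcal{L})^{k/2}$ and defers the details to \cite{YangMills}, Proposition 3.6. You have in fact spelled out precisely those details: the spectral gap below the eigenvalue $1$ (from compact resolvent and $\sigma(\mathcal{L})\subseteq(-\infty,0)\cup\{1\}$) yields a $k$-independent decay rate on $\ker\mathcal{P}$, and commuting $(1-\mathcal{L})^{k/2}$ through $S(\tau)$ and $1-\mathcal{P}$ upgrades this to the graph norm.
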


\begin{proof}
The proof is a direct consequence of Proposition \ref{PropoQ} together with the commutating properties of $(1-\mathcal{L})^{\frac{k}{2}}$ for $k \in \N_0$; see \cite{YangMills}, Proposition 3.6, for the details.
\end{proof}

\section{The linear evolution on $X_s^k(\R^n)$} 

In this section, we consider the properties of the semigroup constructed in Proposition \ref{PropoQ} when being restricted to $X_s^k(\R^n)$. We state some technical results first. 

\begin{lemma} \label{graphnormestimate}
For $j \in \{ 0,...,k \}$, we have $X_s^k(\R^n) \subseteq \mathcal{D}((1-\mathcal{L})^{\frac{j}{2}})$ and 
\begin{align}
\Vert (1-\mathcal{L})^{\frac{j}{2}} f \Vert_{\mathcal{H}} \lesssim \Vert f \Vert_{X_s^k(\R^n)},
\end{align}
for all $f \in X^k_s(\R^n)$.
\end{lemma}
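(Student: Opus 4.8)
The plan is a reduction to test functions followed by a weighted‐derivative estimate. Since $X_s^k(\R^n)$ is by definition the completion of $C^\infty_{c,r}(\R^n)$, since $X_s^k(\R^n)\hookrightarrow\mathcal H$ by Lemma~\ref{embeddinglemma}, and since $(1-\mathcal L)^{j/2}$ is a closed (self-adjoint) operator, it suffices to prove
\begin{equation*}
\Vert (1-\mathcal L)^{j/2} f \Vert_{\mathcal H}\lesssim \Vert f\Vert_{X_s^k(\R^n)},\qquad f\in C^\infty_{c,r}(\R^n),
\end{equation*}
and then pass to the limit along an approximating sequence $f_m\to f$ in $X_s^k(\R^n)$: the right-hand side controls a Cauchy sequence $(1-\mathcal L)^{j/2}f_m$ in $\mathcal H$, closedness gives $f\in\mathcal D((1-\mathcal L)^{j/2})$, and the bound survives the limit. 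For $f\in C^\infty_{c,r}(\R^n)$ the operator $\mathcal L$ acts as the differential expression $L$ of \eqref{sumoperators}; since the potential $V$ in \eqref{potential} is smooth with all derivatives bounded and $Lf\in C^\infty_{c,r}(\R^n)$, iteration yields $(1-\mathcal L)^m f=(1-L)^m f=\sum_{|\alpha|\le 2m}p_\alpha\,\partial^\alpha f$ for every $m\in\N_0$, with smooth, polynomially bounded coefficients $p_\alpha$.

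The core estimate I would isolate is: for any multi-index $\alpha$ with $|\alpha|\le k$, any polynomially bounded weight $P$, and any $f\in C^\infty_{c,r}(\R^n)$,
\begin{equation*}
\Vert P\,\partial^\alpha f\Vert_{L^2_\sigma(\R^n)}\lesssim \Vert f\Vert_{X_s^k(\R^n)}.
\end{equation*}
Using $|P(x)|^2 e^{-|x|^2/8}\lesssim 1$ one splits into two regimes. If $|\alpha|\ge s$ (so $s\le|\alpha|\le k$), then $\int|P|^2|\partial^\alpha f|^2\sigma\lesssim\Vert\partial^\alpha f\Vert_{L^2}^2\lesssim\Vert f\Vert_{\dot H^{|\alpha|}}^2$, and homogeneous Sobolev interpolation $\Vert f\Vert_{\dot H^r}\le\Vert f\Vert_{\dot H^s}^{1-\theta}\Vert f\Vert_{\dot H^k}^\theta$ for $r=(1-\theta)s+\theta k$ bounds this by $\Vert f\Vert_{X_s^k}^2$. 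If $|\alpha|<s$, then $\int|P|^2|\partial^\alpha f|^2\sigma\lesssim\Vert\partial^\alpha f\Vert_{L^\infty}^2$, and the elementary bound $\Vert h\Vert_{L^\infty(\R^n)}\lesssim\Vert h\Vert_{\dot H^a(\R^n)}+\Vert h\Vert_{\dot H^c(\R^n)}$ valid for any $a<\tfrac n2<c$ (split the Fourier integral at $|\xi|=1$ and apply Cauchy--Schwarz), applied to $h=\partial^\alpha f$, gives $\Vert\partial^\alpha f\Vert_{L^\infty}\lesssim\Vert f\Vert_{\dot H^{|\alpha|+a}}+\Vert f\Vert_{\dot H^{|\alpha|+c}}$; choosing $a\in[s-|\alpha|,\tfrac n2)$ and $c\in(\tfrac n2,k-|\alpha|]$, which are nonempty because $|\alpha|<s<\tfrac n2<k-\tfrac n2$ by \eqref{sk-range}, puts both exponents into $[s,k]$, and interpolation finishes the estimate.

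Granting this, the even powers $j=2m\le k$ are immediate: $\Vert(1-\mathcal L)^{j/2}f\Vert_{\mathcal H}\le\sum_{|\alpha|\le 2m}\Vert p_\alpha\partial^\alpha f\Vert_{L^2_\sigma}\lesssim\Vert f\Vert_{X_s^k}$. For odd $j=2m+1\le k$ one cannot expand $(1-\mathcal L)^{j/2}$ as a differential operator; instead I would set $g:=(1-L)^m f\in C^\infty_{c,r}(\R^n)$ so that $(1-\mathcal L)^{j/2}f=(1-\mathcal L)^{1/2}g$ and, since $g\in\mathcal D(1-\mathcal L)$, use the quadratic-form identity
\begin{equation*}
\Vert(1-\mathcal L)^{1/2}g\Vert_{\mathcal H}^2=\langle(1-\mathcal L)g,g\rangle_{\mathcal H}=\int_{\R^n}\big(\tfrac32-V\big)|g|^2\sigma+\int_{\R^n}|\nabla g|^2\sigma,
\end{equation*}
where the last equality follows from \eqref{sumoperators}--\eqref{potential} by integration by parts (no boundary terms, $g$ compactly supported) together with the cancellation of the first-order drift term against $\nabla\sigma=-\tfrac x2\sigma$. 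Since $V$ is bounded, the right side is $\lesssim\Vert g\Vert_{\mathcal H}^2+\Vert\nabla g\Vert_{\mathcal H}^2$, and writing $g=\sum_{|\alpha|\le 2m}p_\alpha\partial^\alpha f$ and $\nabla g=\sum_{|\beta|\le 2m+1}q_\beta\partial^\beta f$ with smooth, polynomially bounded coefficients (note $2m,\,2m+1\le k$), the core estimate bounds both by $\Vert f\Vert_{X_s^k}^2$. The main obstacles I anticipate are precisely these two points: handling the genuinely non-local operator $(1-\mathcal L)^{1/2}$, which the quadratic-form identity and the drift cancellation take care of, and the low-order derivative terms with $|\alpha|<s$, which escape control by $\dot H^{|\alpha|}$ and must be absorbed through the $L^\infty$-type embedding permitted by the range \eqref{sk-range} of $(s,k)$.
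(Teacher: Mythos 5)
Your proof is correct, and it follows the same broad strategy as the paper: reduce to test functions by density and closedness of $(1-\mathcal L)^{j/2}$, control $(1-\mathcal L)^{j/2}f$ by polynomially weighted derivatives of order at most $j$, then use the exponential decay of $\sigma$ together with interpolation between $\dot H^s$ and $\dot H^k$. There are two genuine differences of detail worth noting. First, the paper simply asserts the existence of smooth, radial, polynomially bounded $w_\alpha$ with $\|(1-\mathcal L)^{j/2}f\|_{\mathcal H}\lesssim\sum_{|\alpha|\le j}\|w_\alpha\partial^\alpha f\|_{\mathcal H}$ for every $j\le k$, without addressing the non-local case of odd $j$; you supply the missing argument via the quadratic-form identity $\|(1-\mathcal L)^{1/2}g\|_{\mathcal H}^2=\langle(1-\mathcal L)g,g\rangle_{\mathcal H}$ for $g=(1-L)^m f\in C^\infty_{c,r}(\R^n)$, together with the integration by parts in which the drift $-\tfrac12 x\cdot\nabla g$ cancels against $\nabla\sigma=-\tfrac{x}{2}\sigma$, and this computation checks out. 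Second, to absorb a weighted derivative of low order $|\alpha|<s$ into the $X_s^k$-norm, you invoke the Fourier-side $L^\infty$ bound $\|\partial^\alpha f\|_{L^\infty}\lesssim\|f\|_{\dot H^{|\alpha|+a}}+\|f\|_{\dot H^{|\alpha|+c}}$ with a case split on $|\alpha|$ versus $s$; the paper instead sets $s_\alpha:=\max\{s-|\alpha|,0\}$, bounds $w_\alpha\sqrt{\sigma}\lesssim|\cdot|^{-s_\alpha}$, and applies Hardy's inequality to obtain $\|w_\alpha\partial^\alpha f\|_{\mathcal H}\lesssim\||\cdot|^{-s_\alpha}\partial^\alpha f\|_{L^2}\lesssim\|f\|_{\dot H^{s_\alpha+|\alpha|}}$ in one stroke, uniform over all $|\alpha|\le j$. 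Both mechanisms gain exactly the extra $s-|\alpha|$ of regularity needed to land in the interpolation interval $[s,k]$; the paper's route is a bit more compact, while yours is more elementary and, crucially, fully explicit where the paper is terse.
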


\begin{proof}
One finds that there exist smooth, radial and polynomially bounded functions $w_{\alpha}$ satisfying
\begin{align*}
\Vert (1-\mathcal{L})^{\frac{j}{2}} f \Vert_{\mathcal{H}} \lesssim \sum_{|\alpha| \leq j} \Vert w_{\alpha} \partial^{\alpha} f \Vert_{\mathcal{H}},
\end{align*}
for all $f \in C^{\infty}_{c,r}(\R^n)$. Exploiting the exponential decay of the weight function $\sigma$, we obtain
\begin{align*}
\Vert w_{\alpha} \partial^{\alpha} f \Vert_{\mathcal{H}} \lesssim \Vert | \cdot |^{-s_{\alpha}} \partial^{\alpha} f \Vert_{L^2(\R^n)} \lesssim \Vert f \Vert_{\dot{H}^{s_{\alpha}+|\alpha|}} \lesssim \Vert f \Vert_{X_s^k(\R^n)},
\end{align*}
by Hardy's inequality, where $s_{\alpha} = \max \{ s- |\alpha|, 0 \}$. Note that the last inequality follows by $s \leq s_{\alpha} + |\alpha| \leq k$. For general $f \in X_s^k(\R^n)$ we use the density of $C^{\infty}_{c,r}(\R^n)$ in $X_s^k(\R^n)$ and the closedness of $(1-\mathcal{L})^{\frac{j}{2}}$.
\end{proof}

\begin{lemma} \label{EstimateinBR}
Let $j \in \N_0$ and $R>0$. Then 
\begin{align} \label{estimateinBR}
\Vert \partial^{\alpha} f \Vert_{L^2(B^n_{R})} \lesssim \sum_{m=0}^j \Vert f \Vert_{\mathcal{G}((1-\mathcal{L})^{\frac{m}{2}})},
\end{align}
for all $f \in C^{\infty}_{c,r}(\R^n)$ and all $\alpha \in \N_0^n$ with $|\alpha| = j$.
\end{lemma}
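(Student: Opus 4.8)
The plan is to bound the $L^2(B_R^n)$-norm of $\partial^\alpha f$ by iterating an elliptic-type estimate that converts derivatives of order $j$ into at most one application of the operator $1-\mathcal{L}$ plus lower-order terms, using the crucial fact that on a bounded ball the weight $\sigma(x) = e^{-|x|^2/4}$ is bounded from below, so that the $\mathcal{H}$-norm and the $L^2(B_R^n)$-norm are comparable on such functions. Concretely, I would first treat the case $j=0$, where $\Vert f\Vert_{L^2(B_R^n)} \lesssim_R \Vert f\Vert_{\mathcal{H}}$ is immediate from $\sigma \gtrsim_R 1$ on $B_R^n$. For $j=1$ the idea is to test the identity defining $\mathcal{L}_0$ (or $\mathcal{L}$) against $f$ itself with a cutoff: writing $\mathcal{L}_0 f = \Delta f - \tfrac12 x\cdot\nabla f - \tfrac n2 f$, an integration by parts on $B_{2R}$ against $\chi^2 f$ with $\chi$ a smooth cutoff equal to $1$ on $B_R$ yields $\int_{B_R}|\nabla f|^2 \lesssim_R \Vert f\Vert_{L^2(B_{2R})}^2 + \Vert \mathcal{L}_0 f\Vert_{L^2(B_{2R})}\Vert f\Vert_{L^2(B_{2R})}$, and since $L_1 = V$ is bounded this also controls $\Vert\mathcal{L}f\Vert$; combining with the $j=0$ estimate (applied on slightly larger balls) and $\Vert \mathcal{L} f\Vert_{\mathcal{H}} \le \Vert f\Vert_{\mathcal{G}((1-\mathcal{L}))}$ gives \eqref{estimateinBR} for $j=1$.

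For general $j$, I would proceed by induction, the key point being that $\mathcal{L}$ is, modulo lower-order terms with smooth polynomially bounded (in particular locally bounded) coefficients, a second-order elliptic operator with smooth coefficients, so interior elliptic regularity on concentric balls gives
\begin{align*}
\Vert f\Vert_{H^{m}(B_{R})} \lesssim_R \Vert \mathcal{L} f\Vert_{H^{m-2}(B_{2R})} + \Vert f\Vert_{L^2(B_{2R})}.
\end{align*}
Applying this with $m = j$ and then feeding $\mathcal{L} f$ back into the induction hypothesis (valid for derivative order $\le j-2$ applied to the function $\mathcal{L} f$, or rather iterating the estimate directly on $f$ by commuting derivatives through $\mathcal{L}$), one reduces the order by two at the cost of one extra power of $(1-\mathcal{L})$; after $\lceil j/2\rceil$ steps one is left with terms of the form $\Vert (1-\mathcal{L})^{\lceil j/2\rceil} f\Vert_{L^2(B_{\cdot})} \lesssim \Vert (1-\mathcal{L})^{\lceil j/2\rceil} f\Vert_{\mathcal{H}}$, which is dominated by $\Vert f\Vert_{\mathcal{G}((1-\mathcal{L})^{m/2})}$ with $m = 2\lceil j/2\rceil \ge j$, plus lower-order graph norms. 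Since all functions involved are in $C^\infty_{c,r}(\R^n)$, every integration by parts and every application of elliptic regularity is justified with no boundary contributions, and all the intermediate quantities are finite.

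The main obstacle I expect is bookkeeping rather than anything deep: one must make sure that, when reducing $\partial^\alpha f$ with $|\alpha|=j$ odd, the leftover single derivative is absorbed correctly — elliptic estimates most naturally produce even-order control, so for odd $j$ one combines an elliptic bound for $\lfloor j/2\rfloor$ full applications of $\mathcal{L}$ with one additional first-order bound as in the $j=1$ case. A secondary point of care is that the first-order coefficient $\tfrac12 x\cdot\nabla$ and the potential $V$, together with the coefficients generated by commuting $\partial^\alpha$ past $\mathcal{L}$, are only polynomially bounded globally; but since all estimates are localized to a fixed ball $B_R$ this causes no trouble, the implicit constants simply depending on $R$ (and on $j$, $n$), which is exactly what the statement \eqref{estimateinBR} allows. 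The result then follows by choosing the chain of nested balls to start from $B_R$ and using that $C^\infty_{c,r}(\R^n) \subset \mathcal{D}((1-\mathcal{L})^{m/2})$.
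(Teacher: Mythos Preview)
Your overall strategy---localize to a ball where $\sigma\simeq 1$, use interior elliptic regularity for the operator $\mathcal L$ (which has smooth, locally bounded coefficients), and iterate---is sound and genuinely different from the paper's route. The paper instead exploits the self-adjoint structure explicitly: it factorizes $1-L=B^*B$ using the eigenfunction $\varrho$, so that $\|Bf\|_{\mathcal H}=\|(1-\mathcal L)^{1/2}f\|_{\mathcal H}$ for free, and then reduces higher derivatives via the radial operators $D^j$ together with the commutator $D^j\Lambda=\Lambda D^j+\tfrac{j}{2}D^j$ and an equivalence $\|\partial^\alpha f\|_{L^2(B_R)}\lesssim\sum_{i\le|\alpha|}\|D^i f\|_{L^2(B_R)}$ borrowed from \cite{KellerSegel}. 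Your approach has the advantage of not touching the radial structure or the specific eigenfunction; theirs has the advantage that the half-integer graph norms appear naturally.

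There is, however, a real gap in your $j=1$ step, and it propagates. Your Caccioppoli argument (testing against $\chi^2 f$ in \emph{unweighted} $L^2$) yields
\[
\int_{B_R}|\nabla f|^2\lesssim_R \|f\|_{L^2(B_{2R})}^2+\|\mathcal L_0 f\|_{L^2(B_{2R})}\|f\|_{L^2(B_{2R})},
\]
and you then invoke $\|\mathcal Lf\|_{\mathcal H}\le\|f\|_{\mathcal G(1-\mathcal L)}$. But $\|f\|_{\mathcal G(1-\mathcal L)}$ is the $m=2$ graph norm, whereas \eqref{estimateinBR} for $j=1$ only allows $m\le 1$. The product $\|(1-\mathcal L)f\|_{\mathcal H}\|f\|_{\mathcal H}$ is in general \emph{not} controlled by $\|(1-\mathcal L)^{1/2}f\|_{\mathcal H}^2+\|f\|_{\mathcal H}^2$ (take $f$ supported on two spectral projections with eigenvalues $0$ and $N^2$ and coefficients of size $N$ and $1$). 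Consequently, for odd $j$ your iteration produces a bound with $m$ running up to $j+1$, not $j$, which is strictly weaker than the lemma as stated and would also break the chain with Lemma~\ref{graphnormestimate} at the top index $k$.

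The fix is simple and is exactly what the paper's factorization accomplishes: test against $f$ in the \emph{weighted} inner product. A direct integration by parts gives
\[
\langle -\mathcal L_0 f,f\rangle_{\mathcal H}=\int_{\R^n}|\nabla f|^2\,\sigma\,dx+\tfrac12\|f\|_{\mathcal H}^2,
\]
so that $\int_{B_R}|\nabla f|^2\le e^{R^2/4}\int|\nabla f|^2\sigma\le e^{R^2/4}\big(\langle(1-\mathcal L)f,f\rangle_{\mathcal H}+C\|f\|_{\mathcal H}^2\big)=e^{R^2/4}\big(\|(1-\mathcal L)^{1/2}f\|_{\mathcal H}^2+C\|f\|_{\mathcal H}^2\big)$, using boundedness of $V$. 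With this corrected $j=1$ estimate (applied to $(1-\mathcal L)^{\lfloor j/2\rfloor}f$, which stays in $C^\infty_{c,r}(\R^n)$ since $\mathcal L$ preserves compact support), your induction goes through and yields the sharp range $m\le j$.
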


\begin{proof}
Let $R>0$. We define the operator
\begin{align*}
Bf :=\varrho  \left  ( \frac{\tilde f}{\varrho } \right )',
\end{align*}
for $f \in C^{\infty}_{c,r}(\R^n)$, where $f = \tilde f(|\cdot|)$, and $x \in \R^n$, with $\varrho$ denoting the eigenfunction of $\mathcal{L}$ to the eigenvalue $\lambda=1$; see Proposition \ref{PropoQ}. The formal adjoint operator in $\mathcal{H}$ is given by
\begin{align*}
B^* f = - \frac{\tilde f'}{\mu} ,
\end{align*}
for $\mu(\rho) :=  e^{-\frac{\rho}{4}} \rho^{n-1} \varrho(\rho)$ 
and we can write  $(1- L) f =( B^*B \tilde f) (|\cdot|)$.  For $f \in C^{\infty}_{c,r}(\R^n)$ this implies
\begin{align*}
\Vert (Bf )(|\cdot|)\Vert_{\mathcal{H}}^2 &  = \langle Bf(|\cdot|) , Bf(|\cdot|) \rangle_{\mathcal{H}} = \langle (B^* Bf)(|\cdot|) , f) \rangle_{\mathcal{H}}  \\
& = \langle (1-\mathcal{L}) f , f \rangle_{\mathcal{H}} = \Vert (1-\mathcal{L})^{\frac{1}{2}} f \Vert_{\mathcal{H}}^2.
\end{align*}
As inequality \eqref{estimateinBR} is trivial for $|\alpha|=0$ due to the decay of the exponential weight function $\sigma$, we first consider the case $|\alpha|=1$ and then show the estimate by induction. For $|\alpha| = 1$ and $i \in \{1,...,n \}$,  we have
\begin{align*}
\Vert \partial_i f \Vert^2_{L^2(B_R)} &= \int_{B_R} \frac{x_i^2}{|x|^2} |\tilde f^{\prime}(|x|)|^2 dx \leq \int_{B_R} \Bigg( |(Bf)(|x|)|^2 + |f(x)|^2 \left( \tfrac{\varrho^{\prime}(|x|)}{\varrho(|x|)} \right)^2 \Bigg ) dx\\
&\leq e^{\frac{R^2}{4}} \int_{B_R} e^{-\frac{|x|^2}{4}} \Bigg(|(Bf)(|x|)| + |f(x)|^2 \left( \tfrac{\varrho^{\prime}(|x|)}{\varrho(|x|)} \right)^2 \Bigg ) dx\\
&\leq C_R \bigl( \Vert Bf \Vert^2_{\mathcal{H}} + \Vert f \Vert^2_{\mathcal{H}} \bigl),
\end{align*}
which implies
\begin{align*}
\Vert \partial^{\alpha} f \Vert_{L^2(B_R)} \lesssim_R \Vert f \Vert_{\mathcal{G}((1-\mathcal{L})^{\frac{1}{2}})},
\end{align*}
for all $f \in C^{\infty}_{c,r}(\R^n)$ and $\alpha \in \N_0^n$ with $|\alpha|=1$. To use induction we need the following estimate 
\begin{align}\label{Est:D}
\Vert \partial^{\alpha} f \Vert_{L^2(B_R)} \lesssim \sum_{i=0}^{|\alpha|} \Vert D^j f \Vert_{L^2(B_R)},
\end{align} 
which holds for all $f \in C^{\infty}_r(B_R^n)$ and $\alpha \in \N_0^n$ (see \cite{KellerSegel}, Lemma A.1), where the operators $D^j$ are defined as follows: If $j$ is even, one has  $D^j  f := \Delta_r^{\frac{j}{2}} \tilde f$ and if $j$ is odd $D^j f := \bigl( \Delta_r^{\frac{j-1}{2}} \tilde f \bigl)^{\prime}$, where $\Delta_r$ denotes the radial Laplacian on $\R^n$. Note that Lemma A.1 in \cite{KellerSegel} is stated in dimension $n=5$ but can be straightforwardly adapted to higher dimensions. Furthermore, we use the commutator relation
\begin{align*}
D^j \Lambda = \Lambda D^j + \frac{j}{2} D^j,
\end{align*}
for $j \in \N_0$ with the formal operator $\Lambda$ from \eqref{Lambdaop}. Now assume that the inequality \eqref{estimateinBR} holds up to some $j \in \N$. Then 
\begin{align*}
\Vert D^{j+1} f \Vert_{L^2(B_R)} &= \Vert D^{j-1} (D^2 f) \Vert_{L^2(B_R)} = \Vert D^{j-1} ((\mathcal{L}-1+1 + \Lambda - V)f) \Vert_{L^2(B_R)}\\
&\lesssim \Vert D^{j-1} ((1-\mathcal{L})f) \Vert_{L^2(B_R)} + \Vert D^{j-1} ((\Lambda+1)f) \Vert_{L^2(B_R)} + \Vert D^{j-1} (Vf) \Vert_{L^2(B_R)}\\
&\lesssim \Vert D^{j-1} ((1-\mathcal{L})f) \Vert_{L^2(B_R)} + \Vert \bigl( \Lambda + \tfrac{j+1}{2} \bigl) D^{j-1} f \Vert_{L^2(B_R)} + \Vert D^{j-1} (Vf) \Vert_{L^2(B_R)}\\
&\lesssim_{R} \sum_{\substack{\alpha \in \N_0^n \\ |\alpha| \leq j-1}} \Vert \partial^{\alpha} ((1-\mathcal{L})f) \Vert_{L^2(B_R)} + \sum_{\substack{\alpha \in \N_0^n \\ |\alpha| \leq j}} \Vert \partial^{\alpha} f \Vert_{L^2(B_R)}\\
&\lesssim_{R} \sum_{l=0}^{j-1} \Vert (1-\mathcal{L}) f \Vert_{\mathcal{G}((1-\mathcal{L})^{\frac{l}{2}})} + \sum_{l=0}^j \Vert f \Vert_{\mathcal{G}((1-\mathcal{L})^{\frac{l}{2}})}\\
&\lesssim_R \sum_{l=0}^{j+1} \Vert f \Vert_{\mathcal{G}((1-\mathcal{L})^{\frac{l}{2}})},
\end{align*}
for all $f \in C^{\infty}_{c,r}(\R^n)$, which, together with \eqref{Est:D} implies the claim.
\end{proof}

\subsection{The semigroup on $X_s^k(\R^n)$}\label{Sec:Semigroup_X}

\begin{proposition} 
The restriction of $(S(\tau))_{\tau \geq 0}$ to $X_s^k(\R^n)$ defines a strongly continuous one-parameter semigroup $(S_{X_s^k}(\tau))_{ \tau \geq 0 }$ on $X_s^k(\R^n)$. Its generator is given by the part of $\mathcal{L}$ in $X_s^k(\R^n)$, namely
\begin{align*}
\mathcal{L}_{X_s^k} f := \mathcal{L} f, \quad \mathcal{D}(\mathcal{L}_{X_s^k}) := \{ f \in \mathcal{D}(\mathcal{L}) \cap X_s^k(\R^n) : \mathcal{L} f \in X_s^k(\R^n) \}.
\end{align*}
Furthermore, the set of radial Schwartz functions $\mc S_{r}(\R^n)$ is a core of $\mathcal{L}_{X_s^k}$.
\end{proposition}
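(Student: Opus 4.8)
The plan is to build the semigroup on $X_s^k(\R^n)$ in two layers — first control the free semigroup $(S_0(\tau))_{\tau\ge0}$ on $X_s^k(\R^n)$, then incorporate the bounded potential perturbation $L_1$ — and finally read off the generator and the core property from general semigroup theory. For the free part I would use the explicit formula \eqref{S0}: on radial functions $S_0(\tau)$ is convolution with the Gaussian $H_{\kappa(\tau)}$ followed by multiplication by $e^{-\tau/2}$ and the dilation $f\mapsto f(e^{-\tau/2}\,\cdot)$. Since $|\widehat{H_{\kappa}}|\le 1$, convolution is a contraction on each $\dot H^{\mu}(\R^n)$, while the dilation by $e^{-\tau/2}$ multiplies the $\dot H^{\mu}$-norm by $e^{\frac{\tau}{2}(\frac{n}{2}-1-\mu)}$, so that
\[ \Vert S_0(\tau)f\Vert_{\dot H^{\mu}(\R^n)} \le e^{\frac{\tau}{2}\left(\frac{n}{2}-1-\mu\right)}\Vert f\Vert_{\dot H^{\mu}(\R^n)}, \]
which for $\mu\in\{s,k\}$ is norm-decreasing by \eqref{sk-range}. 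Hence $S_0(\tau)$ is bounded on $X_s^k(\R^n)$ uniformly in $\tau\ge0$; it maps $C^{\infty}_{c,r}(\R^n)$ into $\mathcal S_r(\R^n)\subset X_s^k(\R^n)$ (the inclusion by Lemma \ref{InXsk}), inherits the semigroup law from $\mathcal H$, and, computing $\tau^{-1}(S_0(\tau)f-f)$ from \eqref{S0} and using density of $\mathcal S_r(\R^n)$, is strongly continuous on $X_s^k(\R^n)$ with $\tau^{-1}(S_0(\tau)f-f)\to L_0 f=\Delta f-\Lambda f$ in $X_s^k(\R^n)$ for $f\in\mathcal S_r(\R^n)$. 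Denote its generator by $(\widetilde L_0,\mathcal D(\widetilde L_0))$; then $\mathcal S_r(\R^n)\subseteq\mathcal D(\widetilde L_0)$, and since $S_0(\tau)$ maps $\mathcal S_r(\R^n)$ into itself (convolution with a Gaussian and dilation both preserve Schwartz functions), the dense semigroup-invariant subspace $\mathcal S_r(\R^n)$ is a core of $\widetilde L_0$.

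Next I would add the potential. The function $V$ from \eqref{potential} is smooth, radial, and satisfies $|\partial^{\alpha}V(y)|\lesssim\langle y\rangle^{-2-|\alpha|}$ for all $\alpha$, hence $V\in X_s^k(\R^n)$ by Lemma \ref{InXsk}, and by the algebra property in Lemma \ref{embeddinglemma} the operator $L_1 f=Vf$ is bounded on $X_s^k(\R^n)$. By the Bounded Perturbation Theorem (\cite{Engel}, p.~158, Theorem~1.3), $\widetilde L_0+L_1$ with domain $\mathcal D(\widetilde L_0)$ generates a strongly continuous semigroup $(T(\tau))_{\tau\ge0}$ on $X_s^k(\R^n)$, represented by its Dyson--Phillips series in $S_0(\tau)$ and $L_1$. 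Reading this series term by term in $\mathcal H$ — into which $X_s^k(\R^n)$ embeds continuously by Lemma \ref{embeddinglemma} — it coincides with the Dyson--Phillips series of $(S(\tau))$ from Proposition \ref{PropoQ}, and since it converges in $X_s^k(\R^n)$ (hence in $\mathcal H$), we get $T(\tau)f=S(\tau)f$ for all $f\in X_s^k(\R^n)$. In particular $X_s^k(\R^n)$ is $(S(\tau))$-invariant and the restriction $S_{X_s^k}(\tau):=S(\tau)|_{X_s^k(\R^n)}$ is exactly $(T(\tau))$, a strongly continuous semigroup on $X_s^k(\R^n)$.

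It remains to identify the generator and the core. Because $X_s^k(\R^n)\hookrightarrow\mathcal H$ continuously, is $(S(\tau))$-invariant, and carries the strongly continuous restricted semigroup, the generator of $(S_{X_s^k}(\tau))$ is the part of $\mathcal L$ in $X_s^k(\R^n)$: if $f$ lies in the generator's domain, then $\tau^{-1}(S_{X_s^k}(\tau)f-f)$ converges in $X_s^k(\R^n)$, hence in $\mathcal H$, so $f\in\mathcal D(\mathcal L)$ with $\mathcal L f\in X_s^k(\R^n)$; conversely, for $f\in\mathcal D(\mathcal L)\cap X_s^k(\R^n)$ with $\mathcal L f\in X_s^k(\R^n)$, the Duhamel identity $S(\tau)f-f=\int_0^{\tau}S(\sigma)\mathcal L f\,d\sigma$, read as a Bochner integral in $X_s^k(\R^n)$ (valid since $\sigma\mapsto S_{X_s^k}(\sigma)\mathcal L f$ is $X_s^k(\R^n)$-continuous and agrees with $S(\sigma)\mathcal L f$ in $\mathcal H$), yields $\tau^{-1}(S_{X_s^k}(\tau)f-f)\to\mathcal L f$ in $X_s^k(\R^n)$. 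This gives $\mathcal L_{X_s^k}$ in the stated form with $\mathcal D(\mathcal L_{X_s^k})=\mathcal D(\widetilde L_0)$. Finally, since $L_1$ is bounded, the graph norms of $\widetilde L_0$ and of $\mathcal L_{X_s^k}=\widetilde L_0+L_1$ on $\mathcal D(\widetilde L_0)$ are equivalent, and as $\mathcal S_r(\R^n)$ is a core of $\widetilde L_0$, it is a core of $\mathcal L_{X_s^k}$.

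The main obstacle is the first step: establishing the uniform boundedness and, above all, the strong continuity of $S_0(\tau)$ on the \emph{fractional} homogeneous space $\dot H^s(\R^n)$, and verifying carefully that $V$ and $\mathcal S_r(\R^n)$ genuinely lie in $X_s^k(\R^n)$ via Lemma \ref{InXsk}. Once the free semigroup is controlled on $X_s^k(\R^n)$, the remaining ingredients — the bounded perturbation argument, the identification of the part of a generator in a continuously embedded invariant subspace, and the core property via graph-norm equivalence — are routine semigroup theory.
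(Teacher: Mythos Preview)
Your proposal is correct and follows essentially the same strategy as the paper: establish boundedness and strong continuity of the free semigroup $S_0(\tau)$ on $X_s^k(\R^n)$ via the explicit formula \eqref{S0} on the Fourier side, add the bounded multiplication operator $L_1$ by the Bounded Perturbation Theorem, and then identify the resulting semigroup with the restriction of $(S(\tau))_{\tau\ge0}$ and read off the generator and the core property. The only minor differences are in presentation---the paper proves strong continuity by showing $\Vert S_0(\tau)f-f\Vert\to0$ directly via dominated convergence on the Fourier side (rather than computing the difference quotient), and for the identification $T(\tau)=S(\tau)|_{X_s^k}$ and the generator as the part of $\mathcal L$ it simply invokes \cite{Engel}, p.~60, Proposition~2.3 and \cite{YangMills}, Proposition~3.14, whereas you spell out the Dyson--Phillips and Duhamel arguments explicitly.
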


\begin{proof}
We first consider the free semigroup $S_0(\tau)$ and show that $X_s^k(\R^n)$ is invariant under its action. Let $f \in C^{\infty}_{c,r}(\R^n)$ and $\tau >0$. By the explicit representation \eqref{S0} we obtain
\begin{align}\label{MapPropS0}
\Vert | \cdot |^s \mathcal{F} ( S_0(\tau) f) \Vert_{L^2(\R^n)} \lesssim e^{\frac{1}{2}(\frac{n}{2}-1-s)\tau} \Vert | \cdot |^s \mathcal{F} f \Vert_{L^2(\R^n)},
\end{align}
where we used that the heat kernel $H_{\kappa(\tau)}$ is an element of $L^1(\R^n)$ for $\tau >0$ with norm equal to $1$. The same holds for $k$ instead of $s$. This implies the following bound for the operator norm
\begin{align*}
\Vert S_0(\tau) \Vert_{\mathcal{L}(X_s^k(\R^n))} \lesssim \max \{e^{\frac{1}{2}(\frac{n}{2}-1-s)\tau}, e^{\frac{1}{2}(\frac{n}{2}-1-k)\tau} \} = e^{\frac{1}{2}(\frac{n}{2}-1-s)\tau}.
\end{align*} 
In a similar way we get the equality
\begin{align*}
\Vert | \cdot |^s \mathcal{F} ( S_0(\tau) f - f) \Vert_{L^2(\R^n)} = \Vert | \cdot |^s (e^{\frac{\tau}{2}(n-1)} e^{-(e^{\tau}-1)| \cdot |^2} \mathcal{F}(f)(e^{\frac{\tau}{2}} \cdot) - \mathcal{F}f) \Vert_{L^2(\R^n)},
\end{align*}
by the explicit form of the heat kernel on the Fourier side. Again, the same holds for $k$ instead of $s$. An application of the dominated convergence theorem shows 
\[\lim_{\tau \rightarrow 0^+}( S_0(\tau) f - f) = 0\]
in $X_s^k(\R^n)$ for all $f \in C^{\infty}_{c,r}(\R^n)$ and hence for all $f \in X_s^k(\R^n)$. Together with the embedding $X_s^k(\R^n) \hookrightarrow \mathcal{H}$ by Lemma \ref{embeddinglemma} all conditions for $( S_0(\tau))_{\tau \geq 0}$ to be a strongly continuous semigroup on $X_s^k(\R^n)$ are satisfied. A standard result from semigroup theory (see \cite{Engel}, p.~60, Proposition 2.3) yields that the restriction of $\mathcal{L}_0$ to $X_s^k(\R^n)$ denoted by $\mathcal{L}_0|_{X_s^k}$ with domain $\mathcal{D}(\mathcal{L}_0|_{X_s^k}) = \{ f \in \mathcal{D}(\mathcal{L}_0) \cap X_s^k(\R^n) : \mathcal{L}_0f \in X_s^k(\R^n) \}$ generates the restricted semigroup $( S_0(\tau)|_{X_s^k})_{\tau \geq 0}$. 

Since $X_s^k(\R^n)$ is closed under multiplication, see Lemma \ref{embeddinglemma},  and  $V \in X_s^k(\R^n)$, we infer that the operator $L_1$ from \eqref{sumoperators} is bounded on $X_s^k(\R^n)$. The Bounded Perturbation Theorem implies that $\mathcal{L}_{X_s^k}$ generates a strongly continuous semigroup $(S_{X_s^k}(\tau))_ {\tau \geq 0}$ on $X_s^k(\R^n)$. That this coincides indeed with the restriction of $(S(\tau))_ {\tau \geq 0 }$ to $X_s^k(\R^n)$ can be seen as in  \cite{YangMills}, Proposition 3.14. Finally, since the set of radial Schwartz functions $\mc S_r(\R^n) \subset X_s^k(\R^n)$ is dense and left invariant under the action of $ S_0(\tau) $ for $\tau \geq 0$ we infer that $\mc S_r(\R^n)$ is a core of $\mathcal{L}_0|_{X_s^k}$ and hence for the full operator $\mathcal{L}_{X_s^k}$.
\end{proof}

\begin{lemma}
The restriction of the projection operator $\mathcal{P}$ defined by \eqref{Projection} to $X_s^k(\R^n)$ induces a (non-orthogonal) projection $\mathcal{P}_{X_s^k}$ on $X_s^k(\R^n)$,
\begin{align*}
\mathcal{P}_{X_s^k} f = \langle f , G \rangle_{\mathcal{H}} G, \quad \text{for $f \in X_s^k(\R^n)$},
\end{align*}
which commutes with the operator $\mathcal{L}_{X_s^k}$ and the semigroup $S_{X_s^k}(\tau)$ for all $\tau \geq 0$. The kernel of $\mathcal{P}_{X_s^k}$ is given by
\begin{align*}
\ker \mathcal{P}_{X_s^k} = \{ f \in X_s^k(\R^n) : \langle f , G \rangle_{\mathcal{H}} = 0 \}.
\end{align*}
\end{lemma}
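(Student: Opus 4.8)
The plan is to verify three things in turn: that $\mathcal{P}_{X_s^k}$ is well-defined and bounded on $X_s^k(\R^n)$; that it is an idempotent with the stated kernel; and that it commutes with $\mathcal{L}_{X_s^k}$ and with $S_{X_s^k}(\tau)$. For boundedness, I would first note that $G = \varrho(|\cdot|)/\|\varrho\|_{\mathcal H}$ with $\varrho(|x|) = (|x|^2+b)^{-3/2}$ satisfies the decay hypothesis $|\partial^\alpha G(x)| \lesssim \langle x\rangle^{-1-|\alpha|}$ for all multi-indices $\alpha$, hence $G \in X_s^k(\R^n)$ by Lemma~\ref{InXsk}. Then, using the embedding $X_s^k(\R^n) \hookrightarrow \mathcal H$ from Lemma~\ref{embeddinglemma}, for $f \in X_s^k(\R^n)$ we have $|\langle f,G\rangle_{\mathcal H}| \leq \|f\|_{\mathcal H}\|G\|_{\mathcal H} \lesssim \|f\|_{X_s^k(\R^n)}$, so
\begin{align*}
\|\mathcal{P}_{X_s^k} f\|_{X_s^k(\R^n)} = |\langle f,G\rangle_{\mathcal H}|\, \|G\|_{X_s^k(\R^n)} \lesssim \|f\|_{X_s^k(\R^n)}.
\end{align*}
This also shows $\mathcal{P}_{X_s^k}$ maps into the one-dimensional span of $G$ inside $X_s^k(\R^n)$, so it is genuinely the restriction of $\mathcal P$.

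For idempotency, since $G$ is $\mathcal H$-normalized, $\mathcal{P}_{X_s^k}^2 f = \langle \langle f,G\rangle_{\mathcal H} G, G\rangle_{\mathcal H} G = \langle f,G\rangle_{\mathcal H}\langle G,G\rangle_{\mathcal H} G = \langle f,G\rangle_{\mathcal H} G = \mathcal{P}_{X_s^k} f$. The kernel description is immediate: $\mathcal{P}_{X_s^k} f = 0$ iff $\langle f,G\rangle_{\mathcal H} G = 0$ iff $\langle f,G\rangle_{\mathcal H} = 0$, since $G \neq 0$. That the projection is non-orthogonal (with respect to the $X_s^k$ inner product) is simply the observation that the complement is the $\mathcal H$-orthogonal complement of $G$ intersected with $X_s^k(\R^n)$, not the $X_s^k$-orthogonal one; I would state this without further ado.

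For the commutation properties, the cleanest route is to reduce to the already-established commutation on $\mathcal H$. By Proposition~\ref{PropoQ}, $\mathcal P$ commutes with $\mathcal L$ and with $S(\tau)$ on $\mathcal H$ (as $G$ spans the eigenspace for $\lambda=1$ and $\mathcal P$ is the associated spectral projection). Since $(S_{X_s^k}(\tau))_{\tau\geq 0}$ is by the preceding Proposition the restriction of $(S(\tau))_{\tau\geq 0}$ to $X_s^k(\R^n)$, and $\mathcal{P}_{X_s^k}$ is the restriction of $\mathcal P$, for $f \in X_s^k(\R^n)$ we get $\mathcal{P}_{X_s^k} S_{X_s^k}(\tau) f = \mathcal P S(\tau) f = S(\tau)\mathcal P f = S_{X_s^k}(\tau)\mathcal{P}_{X_s^k} f$, where the middle equality is the $\mathcal H$-commutation and the outer equalities use that $\mathcal P f = \mathcal{P}_{X_s^k} f \in X_s^k(\R^n)$ and $S(\tau)$ preserves $X_s^k(\R^n)$. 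Commutation with the generator $\mathcal{L}_{X_s^k}$ then follows on $\mathcal D(\mathcal{L}_{X_s^k})$ by differentiating $\mathcal{P}_{X_s^k}S_{X_s^k}(\tau)f = S_{X_s^k}(\tau)\mathcal{P}_{X_s^k}f$ at $\tau=0^+$, using that $\mathcal{P}_{X_s^k}$ is bounded so it passes through the limit, and that $\mathcal{P}_{X_s^k}f \in \mathcal D(\mathcal{L}_{X_s^k})$ because $\mathcal P f$ lies in the $\lambda=1$ eigenspace and is in $X_s^k(\R^n)$. The one genuine subtlety — and the only step I would treat carefully rather than by citation — is checking that $\mathcal{P}_{X_s^k}f \in \mathcal D(\mathcal{L}_{X_s^k})$, i.e. that $G \in \mathcal D(\mathcal L)$ with $\mathcal L G = G \in X_s^k(\R^n)$; this is immediate since $G$ is an eigenfunction, but it is what makes the reduction to $\mathcal H$ legitimate. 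I would reference \cite{YangMills} for the analogous argument where appropriate, noting that the only new input here, the fractional exponent $s$, enters solely through Lemmas~\ref{embeddinglemma} and \ref{InXsk}, which have already been established.
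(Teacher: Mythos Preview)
Your proof is correct and follows essentially the same approach as the paper: membership $G \in X_s^k(\R^n)$ via Lemma~\ref{InXsk}, boundedness via the embedding $X_s^k(\R^n) \hookrightarrow \mathcal H$ from Lemma~\ref{embeddinglemma}, and commutation via the eigenfunction property of $G$. The only minor difference is that the paper argues commutation with $\mathcal{L}_{X_s^k}$ directly from self-adjointness of $\mathcal L$ in $\mathcal H$ together with $\mathcal L G = G$ (so $\langle \mathcal L f, G\rangle_{\mathcal H} = \langle f, G\rangle_{\mathcal H}$), whereas you deduce it by differentiating the semigroup commutation at $\tau = 0^+$; both routes are equally short and valid.
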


\begin{proof}
By Lemma \ref{InXsk} and the fact that $G = C \phi(|\cdot|)^3$ for some constant $C \in \R$ we infer that $G \in X_s^k(\R^n)$. For $f \in X_s^k(\R^n)$ we have
\begin{align*}
\Vert \mathcal{P}_{X_s^k} f \Vert_{X_s^k} = | \langle f , G \rangle_{\mathcal{H}} | \Vert G \Vert_{X_s^k} \leq \Vert f \Vert_{\mathcal{H}} \Vert G \Vert_{\mathcal{H}} \Vert G \Vert_{X_s^k} \lesssim \Vert f \Vert_{X_s^k},
\end{align*}
by Lemma \ref{embeddinglemma}. That $\mathcal{P}_{X_s^k}$ commutes with $\mathcal{L}_{X_s^k}$ follows directly by $\mathcal{G}$ being the eigenfunction of $\mathcal{L}$ to the eigenvalue $\lambda =1$ and the self-adjointness of $\mathcal{L}$ in $\mathcal{H}$. That $\mathcal{P}_{X_s^k}$ commutes with $S_{X_s^k}(\tau)$ for all $\tau \geq 0$ can be easily seen by again using the aforesaid properties.
\end{proof}
The main goal of this section is to show that the restricted semigroup $(S_{X_s^k}(\tau))_{\tau \geq 0}$ exhibits exponential decay on the stable subspace $\ker \mathcal{P}_{X_s^k}$  in $X_s^k(\R^n)$. For this, we need the following estimate for the potential. 

\begin{lemma} \label{EstimateVf}
Let $R > 0$. There are constants $C_R, C, \varepsilon_V >0$ such that
\begin{align*}
\Vert V f \Vert_{X_s^k(\R^n)} \leq C_R \sum_{j=0}^k \Vert f \Vert_{\mathcal{G}((1-\mathcal{L})^{\frac{j}{2}})} + \frac{C}{R^{\varepsilon_V}} \Vert f \Vert_{X_s^k(\R^n)},
\end{align*}
for all $f \in X_s^k(\R^n)$.
\end{lemma}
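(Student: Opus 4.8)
The statement decomposes the multiplication operator $f \mapsto Vf$ on $X_s^k$ into a "near" piece, controlled by the graph norms of $(1-\mathcal L)^{j/2}$ (hence, via Lemma \ref{graphnormestimate}, ultimately by $\|f\|_{X_s^k}$ but only after one has the interior estimates of Lemma \ref{EstimateinBR} available), and a "far" piece that comes with a small prefactor $R^{-\varepsilon_V}$. The mechanism is standard in this circle of ideas: the potential $V$ from \eqref{potential} is Schwartz-like — indeed $V(y) = 3(n-3)(2\alpha\phi(|y|)^2 - 5\beta|y|^2\phi(|y|)^4)$ with $\phi(\rho)=a/\sqrt{\rho^2+b}$, so $V$ and all its derivatives decay polynomially, $|\partial^\alpha V(y)| \lesssim \langle y\rangle^{-2-|\alpha|}$ for small $|\alpha|$ and in any case $|\partial^\alpha V(y)|\lesssim\langle y\rangle^{-2}$. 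First I would fix a smooth radial cutoff $\chi_R$ with $\chi_R = 1$ on $B_R$, $\operatorname{supp}\chi_R \subset B_{2R}$, and $|\partial^\alpha \chi_R|\lesssim R^{-|\alpha|}$, and split $Vf = (\chi_R V) f + ((1-\chi_R)V) f$.

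For the far piece, the point is that $(1-\chi_R)V$ is supported in $|y|\ge R$ where $|\partial^\alpha((1-\chi_R)V)(y)| \lesssim R^{-\varepsilon}\langle y\rangle^{-2+\varepsilon-|\alpha|}$ for a small $\varepsilon>0$ (one trades a power of $\langle y\rangle^{-1}$ for $R^{-\varepsilon}$ on the support, and the cutoff derivatives only help); so $(1-\chi_R)V$ has $X_s^k$-multiplier-type norm $\lesssim R^{-\varepsilon_V}$. Then, using that $X_s^k(\R^n)$ is closed under multiplication (Lemma \ref{embeddinglemma}) — more precisely, the quantitative statement that multiplication by a function $g$ with $|\partial^\alpha g(y)|\lesssim c\,\langle y\rangle^{-|\alpha|}$, $|\alpha|\le k$, is bounded on $X_s^k$ with norm $\lesssim c$ (this follows from the fractional Leibniz rule / Kato–Ponce as in the proof of the algebra property, controlling the low-regularity $\dot H^s$ factor by $L^\infty$ and conversely) — we get $\|((1-\chi_R)V)f\|_{X_s^k} \le \tfrac{C}{R^{\varepsilon_V}}\|f\|_{X_s^k}$. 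This is the clean half.

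For the near piece, $\chi_R V$ is smooth, radial, and compactly supported in $B_{2R}$, so $\|(\chi_R V)f\|_{X_s^k}$ is controlled by finitely many $L^2$-norms of derivatives of $f$ on $B_{2R}$: writing $\|(\chi_R V)f\|_{\dot H^k} \lesssim \sum_{|\alpha|\le k}\|\partial^\alpha((\chi_R V)f)\|_{L^2}$ and using Leibniz plus the bounds on $\partial^\beta(\chi_R V)$ (which are $R$-dependent constants, folded into $C_R$), one reduces to $\sum_{|\alpha|\le k}\|\partial^\alpha f\|_{L^2(B_{2R})}$; the $\dot H^s$ part with $s<k$ is dominated by the same quantity after interpolation (or directly, since a compactly supported $L^2$ function with all derivatives up to $k$ in $L^2$ lies in $H^s$ with comparable norm). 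Now Lemma \ref{EstimateinBR} converts $\sum_{|\alpha|\le k}\|\partial^\alpha f\|_{L^2(B_{2R})}$ exactly into $\sum_{j=0}^k \|f\|_{\mathcal G((1-\mathcal L)^{j/2})}$, with $R$-dependent constant $C_R$. Combining the two pieces yields the claimed inequality.

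\textbf{Main obstacle.} The only genuinely delicate point is the quantitative multiplier bound on $X_s^k$ for the far piece: one needs that the operator norm of multiplication by $(1-\chi_R)V$ is not merely finite but scales like $R^{-\varepsilon_V}$, and here the non-integer value of $s$ forces the use of a fractional Leibniz rule rather than the plain product rule. Concretely, in $\dot H^s$ one writes $(1-\chi_R)V\,f$ and distributes the $s$ derivatives via Kato–Ponce: the term with all derivatives on $f$ costs $\|(1-\chi_R)V\|_{L^\infty}\lesssim R^{-2}$, the term with derivatives on $(1-\chi_R)V$ costs $\||\cdot|^s((1-\chi_R)V)\|_{\text{(suitable)}}\|f\|_{L^\infty}$, and one must check the relevant seminorm of $(1-\chi_R)V$ is $O(R^{-\varepsilon_V})$ — this is where the decay $|V(y)|\lesssim\langle y\rangle^{-2}$ and the choice $s<\tfrac n2$ are used so that $|\cdot|^{s}$ against a symbol decaying like $\langle\cdot\rangle^{-2}$ on $\{|y|\ge R\}$ still produces a decaying, $R$-small quantity. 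I would isolate this as the technical heart and expect it to be the longest computation; the rest is bookkeeping with cutoffs and an appeal to Lemma \ref{EstimateinBR}.
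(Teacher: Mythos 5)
Your overall strategy matches the paper's: split the multiplication operator into a near piece on a ball of radius $\sim R$, controlled by $L^2$-norms of derivatives there and hence (via Lemma \ref{EstimateinBR}) by $\sum_{j\le k}\|f\|_{\mathcal G((1-\mathcal L)^{j/2})}$, and a far piece where the decay of $V$ supplies a factor $R^{-\varepsilon_V}$. The near-piece treatment is essentially identical to the paper's.

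Where you genuinely diverge is in handling the fractional $\dot H^s$-seminorm, which you correctly flag as the technical heart. You propose to keep the exponent $s$ fractional and attack the far piece $((1-\chi_R)V)f$ with a Kato--Ponce (fractional Leibniz) estimate directly in $\dot H^s$, which means you must then show that a fractional-order seminorm of the weight $(1-\chi_R)V$, supported on $\{|y|\ge R\}$, is $O(R^{-\varepsilon_V})$. That is plausible but delicate: $|\nabla|^s$ is nonlocal, so the claim is not a pointwise consequence of $|\partial^\alpha V|\lesssim\langle y\rangle^{-2-|\alpha|}$ and needs, e.g., an interpolation argument in $L^q$ together with a careful choice of H\"older exponents so that the companion norm of $f$ lands inside what $X_s^k\hookrightarrow L^\infty\cap\dot H^s\cap\dot H^k$ actually controls. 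The paper sidesteps this entirely: it first passes from $\dot H^s$ to $\dot W^{\lceil s\rceil,p}$ with $\frac1p=\frac12+\frac{\lceil s\rceil-s}{n}$ via Hardy--Littlewood--Sobolev (display \eqref{WspEmb}), so that only the classical Leibniz rule at integer order $\lceil s\rceil$ is ever needed, and then uses sharp cutoffs at the level of the integrals, H\"older in $L^p$, and Hardy's inequality to produce the $R^{-\varepsilon_V}$ factor. This is cleaner because no fractional product rule is invoked, and the far-piece smallness follows from an elementary $L^q(B_R^c)$-estimate of a power weight. Your route could presumably be made to work, but the far-piece multiplier bound is left at a sketch and would be the main labour; you should either carry it out explicitly or, more economically, adopt the paper's reduction to $\dot W^{\lceil s\rceil,p}$ and keep the rest of your cutoff scheme.

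Two small points. First, the pointwise bound $|\partial^\alpha V(y)|\lesssim\langle y\rangle^{-2-|\alpha|}$ holds for \emph{all} multi-indices $\alpha$, not merely small ones, since $V$ is a rational function of $|y|$ with the indicated decay. Second, in your near-piece reduction you invoke ``$H^s$ with comparable norm'' for the compactly supported product; note that the relevant inequality $\|g\|_{\dot H^s}\le\|g\|_{H^s}\lesssim\|g\|_{H^k}$ does hold for $s<k$, but the implied constant depends on the support radius and must be absorbed into $C_R$, which is fine here but worth stating.
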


\begin{proof}
To show the claimed estimate we use that for all $f \in C^{\infty}_{c}(\R^n)$ and $s$ as in \eqref{sk-range} we have
\begin{align}\label{WspEmb}
\| f \|_{\dot H^s(\R^n)} \simeq  \| |\cdot|^{-(n- \lceil s \rceil + s)} \ast (|\nabla|^{\lceil s \rceil} f) \|_{L^2(\R^n)}  \lesssim \|  |\nabla|^{\lceil s \rceil} f \|_{L^p(\R^n)}  = :\| f \|_{\dot{W}^{\lceil s \rceil, p}(\R^n)}
\end{align}
for $\frac{1}{p} = \frac{1}{2} + \frac{\lceil s \rceil - s}{n} $ by the Hardy-Littlewood-Sobolev inequality (see, e.g., \cite{Tao}, p.~335). This allows us to take integer derivatives of the product $Vf$ instead of fractional ones appearing in the $\dot{H}^s(\R^n)$ norm. In particular, we can use equivalence of norms (see, \cite{Tao}, p.~331), and the standard Leibniz rule to infer that 
\begin{align*}
\Vert Vf \Vert_{\dot{H}^s(\R^n)} &  \lesssim \Vert Vf \Vert_{\dot{W}^{\lceil s \rceil, p}(\R^n)} \lesssim \sum_{\substack{\alpha \in \N_0^n \\ |\alpha| = \lceil s \rceil}} \Vert \partial^{\alpha} (Vf) \Vert_{L^p(\R^n)}  \\
& \lesssim \sum_{\substack{\alpha \in \N_0^n \\ |\alpha| = \lceil s \rceil}} \sum_{\substack{\beta \in \N_0^n \\ \beta \leq \alpha}} \binom{\alpha}{\beta} \Vert \partial^{\beta} V \partial^{\alpha-\beta} f \Vert_{L^p(\R^n)},
\end{align*}
for all $f \in C^{\infty}_{c,r}(\R^n)$. Thus, it suffices to estimate $\partial^{\beta} V \partial^{\alpha-\beta} f$ in $L^p(\R^n)$ for $\alpha, \beta \in \N_0^n$ with $\beta \leq  \alpha $. For this we split
\begin{align*}
\Vert \partial^{\beta} V \partial^{\alpha-\beta} f \Vert_{L^p(\R^n)} \leq \Vert \partial^{\beta} V \partial^{\alpha-\beta} f \Vert_{L^p(B_R)} + \Vert \partial^{\beta} V \partial^{\alpha-\beta} f \Vert_{L^p(B_R^c)},
\end{align*}
and use the decay $|\partial^{\beta} V(x)| \lesssim \langle x \rangle^{-2-|\beta|}$ for all $\beta \in \N_0^n$ and $x \in \R^n$. In $B_R$ we obtain by H\"older's inequality with $\frac{1}{p} = \frac{1}{2} + \frac{1}{q}$ and Lemma \ref{EstimateinBR}
\begin{align*}
\Vert \partial^{\beta} V \partial^{\alpha-\beta} f \Vert_{L^p(B_R)} &\leq \Vert \partial^{\beta} V \Vert_{L^q(B_R)} \Vert \partial^{\alpha-\beta} f \Vert_{L^2(B_R)} \leq C_R \sum_{j=0}^{|\alpha|-|\beta|} \Vert f \Vert_{\mathcal{G}((1-\mathcal{L})^{\frac{j}{2}})},
\end{align*}
for some constant $C_R >0$. In $B_R^c$ we use again H\"older's inequality and exploit the decay of $V$ to get
\begin{align*}
\Vert \partial^{\beta} V \partial^{\alpha-\beta} f \Vert_{L^p(B_R^c)} & \leq C \Vert |\cdot|^{-2+m} \Vert_{L^q(B_R^c)} \Vert |\cdot|^{-|\beta|-m} \partial^{\alpha-\beta} f \Vert_{L^2(B_R^c)}\\
&\leq \frac{C}{R^{\varepsilon_V}} \Vert f \Vert_{\dot{H}^{|\alpha|+m}(\R^n)} \leq \frac{C}{R^{\varepsilon_V}} \Vert f \Vert_{X_s^k(\R^n)},
\end{align*}
for some $C = C(m) >0$ and $\varepsilon_V = \varepsilon(m) >0$, where $m = \max \{ s- |\alpha|, - |\beta| \}$. Note that the choice of $m$ allows for Hardy's inequality and the embedding $X_s^k(\R^n) \hookrightarrow \dot{H}_r^{|\alpha|+m}(\R^n)$. For the exponent $k$ we obtain
\begin{align*}
\Vert Vf \Vert_{\dot{H}^k(\R^n)} \simeq \sum_{\substack{\alpha \in \N_0^n \\ |\alpha|=k }} \Vert \partial^{\alpha} f \Vert_{L^2(\R^n)} \leq \sum_{\substack{\alpha \in \N_0^n \\ |\alpha| = k}} \sum_{\substack{\beta \in \N_0^n \\ \beta \leq \alpha}} \binom{\alpha}{\beta} \Vert \partial^{\beta} V \partial^{\alpha-\beta} f \Vert_{L^2(\R^n)},
\end{align*}
for all $f \in C^{\infty}_{c,r}(\R^n)$, where 
\begin{align*}
\Vert \partial^{\beta} V \partial^{\alpha-\beta} f \Vert_{L^2(\R^n)} \leq \Vert \partial^{\beta} V \partial^{\alpha-\beta} f \Vert_{L^2(B_R)} + \Vert \partial^{\beta} V \partial^{\alpha-\beta} f \Vert_{L^2(B_R^c)}.
\end{align*}
Again the behaviour of $V$ together with Lemma \ref{EstimateinBR} implies
\begin{align*}
\Vert \partial^{\beta} V \partial^{\alpha-\beta} f \Vert_{L^2(B_R)} \leq C_R \sum_{j=0}^{|\alpha|-|\beta|} \Vert f \Vert_{\mathcal{G}((1-\mathcal{L})^{\frac{j}{2}})} \leq C_R \sum_{j=0}^{k} \Vert f \Vert_{\mathcal{G}((1-\mathcal{L})^{\frac{j}{2}})},
\end{align*} 
for some constant $C_R >0$. In $B_R^c$ we infer
\begin{align*}
\Vert \partial^{\beta} V \partial^{\alpha-\beta} f \Vert_{L^2(B_R^c)} &\leq C \Vert | \cdot |^{-2 - |\beta|} \partial^{\alpha-\beta} f \Vert_{L^2(B_R^c)} \\
&\leq \frac{C}{R^2} \Vert | \cdot |^{-|\beta|} \partial^{\alpha-\beta} f \Vert_{L^2(B_R^c)} \\
&\leq \frac{C}{R^2} \Vert | \cdot |^{-(|\beta|-l)} \partial^{\alpha-\beta} f \Vert_{L^2(\R^n)} \\
&\leq \frac{C}{R^2} \Vert f \Vert_{\dot{H}^{k-l}(\R^n)}\\
&\leq \frac{C}{R^2} \Vert f \Vert_{X_s^k(\R^n)},
\end{align*}
for some $C >0$ and $l \geq 0$ satisfying $0 \leq |\beta|-l < \frac{n}{2}$ and $k-l \geq s$, which justify the application of Hardy's inequality and the embedding $X_s^k(\R^n) \hookrightarrow \dot{H}_r^{k-l}(\R^n)$.

Finally, by putting everything together, the density of $C^{\infty}_{c,r}(\R^n) \subseteq X_s^k(\R^n)$ and the closedness of $(1-\mathcal{L})^{\frac{j}{2}}$ for $j \in \N_0$, we obtain
\begin{align*}
\Vert Vf \Vert_{X_s^k(\R^n)} \leq C_R \sum_{j=0}^k \Vert f \Vert_{\mathcal{G}((1-\mathcal{L})^{\frac{j}{2}})} + \frac{C}{R^{\varepsilon_V}} \Vert f \Vert_{X_s^k(\R^n)},
\end{align*}
for all $f \in X_s^k(\R^n)$ with suitably chosen $C_R >0$, $C >0$ and some $\varepsilon_V >0$.
\end{proof}

\begin{proposition} \label{Semigroupdecaypropo}
There exists $\omega >0$ such that 
\begin{align} \label{Semigroupdecay}
\Vert S_{X_s^k}(\tau) (1- \mathcal{P}_{X_s^k}) f \Vert_{X_s^k(\R^n)} \lesssim e^{- \omega \tau} \Vert (1- \mathcal{P}_{X_s^k}) f \Vert_{X_s^k(\R^n)} ,
\end{align}
for all $\tau \geq 0$ and $f \in X_s^k(\R^n)$.
\end{proposition}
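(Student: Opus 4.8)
The plan is to transfer the exponential decay available on $\mathcal H$ (Corollary \ref{Estimatesingraph}) over to $X_s^k(\R^n)$ by a Duhamel argument, exploiting the fact that on $X_s^k(\R^n)$ the \emph{free} semigroup $S_0(\tau)$ already decays exponentially. Indeed, \eqref{MapPropS0} together with $s>\tfrac{n}{2}-1$ from \eqref{sk-range} gives $\Vert S_0(\tau)\Vert_{\mathcal L(X_s^k(\R^n))}\lesssim e^{-\omega_1\tau}$ with $\omega_1:=\tfrac12\bigl(s-\tfrac{n}{2}+1\bigr)>0$. Since $\mathcal L_{X_s^k}=\mathcal L_0|_{X_s^k}+L_1$ with $L_1=V\cdot\,$ bounded on $X_s^k(\R^n)$, I would start from the variation-of-constants formula
\[
S_{X_s^k}(\tau)f = S_0(\tau)f + \int_0^\tau S_0(\tau-\sigma)\,L_1 S_{X_s^k}(\sigma)f\,d\sigma .
\]
It suffices to treat $f\in\ker\mathcal P_{X_s^k}=\{g\in X_s^k(\R^n):\langle g,G\rangle_{\mathcal H}=0\}$, which is contained in $\ker\mathcal P$ and invariant under $S_{X_s^k}(\sigma)$; moreover $S_{X_s^k}(\sigma)f$ agrees with $S(\sigma)f$ as an element of $\mathcal H$.

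The crux is the term $L_1 S_{X_s^k}(\sigma)f = V\,S_{X_s^k}(\sigma)f$. Here $L_1$ is bounded but \emph{not} small on $X_s^k(\R^n)$, so a naive Neumann-series perturbation of $S_0$ would only yield exponential growth at a large rate. The remedy is Lemma \ref{EstimateVf}, applied with $g:=S_{X_s^k}(\sigma)f$: for every $R>0$,
\[
\Vert Vg\Vert_{X_s^k(\R^n)} \le C_R\sum_{j=0}^{k}\Vert g\Vert_{\mathcal G((1-\mathcal L)^{j/2})} + \frac{C}{R^{\varepsilon_V}}\,\Vert g\Vert_{X_s^k(\R^n)} .
\]
Since $g=S(\sigma)(1-\mathcal P)f$, Corollary \ref{Estimatesingraph} bounds each graph norm by $e^{-\omega_0\sigma}\Vert f\Vert_{\mathcal G((1-\mathcal L)^{j/2})}$, and Lemma \ref{graphnormestimate} (valid for $j\le k$) bounds the latter by $\Vert f\Vert_{X_s^k(\R^n)}$. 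Hence
\[
\Vert L_1 S_{X_s^k}(\sigma)f\Vert_{X_s^k(\R^n)} \le C_R\, e^{-\omega_0\sigma}\Vert f\Vert_{X_s^k(\R^n)} + \frac{C}{R^{\varepsilon_V}}\,\Vert S_{X_s^k}(\sigma)f\Vert_{X_s^k(\R^n)} ,
\]
where $C_R$ depends on $R$ but the coefficient $C/R^{\varepsilon_V}$ of the second, still-unknown term does not and can be made arbitrarily small.

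To close, I would insert this bound into the Duhamel identity, use $\Vert S_0(\tau)\Vert_{\mathcal L(X_s^k)}\lesssim e^{-\omega_1\tau}$, fix $\omega\in(0,\min\{\omega_0,\omega_1\})$, and set $h(\tau):=e^{\omega\tau}\Vert S_{X_s^k}(\tau)f\Vert_{X_s^k(\R^n)}$. The contribution of $S_0(\tau)f$ is $\lesssim e^{(\omega-\omega_1)\tau}\Vert f\Vert_{X_s^k}$, the contribution of the $C_R e^{-\omega_0\sigma}$ term is $\lesssim C_R\Vert f\Vert_{X_s^k}$ (the mixed exponential integral converges because $\omega<\min\{\omega_0,\omega_1\}$), and the small term produces $\tfrac{C_2}{R^{\varepsilon_V}}\int_0^\tau e^{-(\omega_1-\omega)(\tau-\sigma)}h(\sigma)\,d\sigma \le \tfrac{C_2}{R^{\varepsilon_V}(\omega_1-\omega)}\sup_{[0,\tau]}h$. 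Choosing $R$ so large that $\tfrac{C_2}{R^{\varepsilon_V}(\omega_1-\omega)}\le\tfrac12$, and recalling that the $C_0$-semigroup $(S_{X_s^k}(\tau))_{\tau\ge0}$ has at most exponential growth so that $h$ is finite on every compact interval, a standard bootstrap yields $\sup_{\tau\ge0}h(\tau)\lesssim\Vert f\Vert_{X_s^k(\R^n)}$. Applying this with $(1-\mathcal P_{X_s^k})f$ in place of $f$ gives \eqref{Semigroupdecay}.

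The hard part is not the semigroup argument above but the estimate underlying Lemma \ref{EstimateVf} (and its companions Lemmas \ref{EstimateinBR}, \ref{graphnormestimate}): one must split the action of $V$ into a piece localized near the origin — controllable through the compact-resolvent/spectral-gap structure of $\mathcal L$ on $\mathcal H$ and interior estimates measured in graph norms — and a genuinely small tail, using the fast decay $|\partial^{\beta}V(x)|\lesssim\langle x\rangle^{-2-|\beta|}$. The non-integer value of $s$ makes this delicate, since one has to pass through $\dot W^{\lceil s\rceil,p}$ via Hardy--Littlewood--Sobolev; but once Lemmas \ref{graphnormestimate}--\ref{EstimateVf} and Corollary \ref{Estimatesingraph} are granted, the proof of the proposition is routine.
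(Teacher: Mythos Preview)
Your argument is correct and uses exactly the same three inputs as the paper (Lemma \ref{graphnormestimate}, Corollary \ref{Estimatesingraph}, Lemma \ref{EstimateVf}), but the way you combine them is genuinely different. The paper proceeds by an \emph{energy argument}: it computes $\langle \mathcal L_0 f,f\rangle_{X_s^k}\le -\omega_s\|f\|_{X_s^k}^2$ directly on the Fourier side, differentiates $\tau\mapsto\|S_{X_s^k}(\tau)\tilde f\|_{X_s^k}$, inserts Lemma \ref{EstimateVf} into the resulting differential inequality, and then integrates (a Gr\"onwall step). You instead work with the Duhamel formula $S_{X_s^k}(\tau)=S_0(\tau)+\int_0^\tau S_0(\tau-\sigma)L_1S_{X_s^k}(\sigma)\,d\sigma$, use the \emph{operator-norm} decay of $S_0(\tau)$ (which encodes the same dissipativity), and close an integral inequality via a bootstrap on $h(\tau)=e^{\omega\tau}\|S_{X_s^k}(\tau)f\|_{X_s^k}$. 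The two routes are equivalent reformulations of the same mechanism; your version has the minor advantage that it avoids differentiating the norm (which the paper justifies by first restricting to $f\in C^\infty_{c,r}(\R^n)$ and then passing to the closure), while the paper's energy computation makes the role of the exponent $\omega_s=\tfrac12(s+1-\tfrac n2)$ slightly more transparent. Your concluding remark correctly identifies that the substantive work lies in Lemma \ref{EstimateVf} and its companions, not in the semigroup step itself.
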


\begin{proof} 
Let $f \in C^{\infty}_{c,r}(\R^n)$ and $\tau \geq 0$. It is easy to see that $\tilde{f} := (1- \mathcal{P}_{X_s^k}) f$ belongs to $\mathcal{D}(\mathcal{L}_{X_s^k})$. First note that by standard properties of the Fourier transform we have
\begin{align*}
\langle \mathcal{L}_0 f, f \rangle_{\dot{H}^s(\R^n)} &= \langle |\cdot|^s \mathcal{F}( \Delta f - \Lambda f), |\cdot|^s \mathcal{F}f \rangle_{L^2(\R^n)}\\
&=- \Vert f \Vert_{\dot{H}^{s+1}(\R^n)}^2 + \tfrac{1}{2}\left ( \tfrac{n}{2}-1-s \right ) \Vert f \Vert_{\dot{H}^{s}(\R^n)}^2 \leq -\omega_s \Vert f \Vert_{\dot{H}^s(\R^n)}^2,
\end{align*}
for all $f \in C^{\infty}_{c,r}(\R^n)$ with $\omega_s := \frac{1}{2}\bigl( s + 1 -\frac{n}{2} \bigl) >0$ and the same holds for the $\dot{H}^k(\R^n)-$norm with $\omega_k:= \frac{1}{2}\bigl( k + 1 -\frac{n}{2} \bigl) > 0$ instead. By the density of $C^{\infty}_{c,r}(\R^n)$ in $X_s^k(\R^n)$ and the closedness of $\mathcal{L}_0|_{X_s^k}$ we infer
\begin{align*}
\langle \mathcal{L}_0 f, f \rangle_{X_s^k(\R^n)} \leq - \omega_s \Vert f \Vert^2_{X_s^k(\R^n)},
\end{align*} 
for all $f \in X_s^k(\R^n)$, as one has $\omega_s < \omega_k$. This implies
\begin{align*}
\Vert S_{X_s^k}(\tau) \tilde{f} \Vert_{X_s^k(\R^n)} \frac{d}{d \tau} \Vert S_{X_s^k}(&\tau) \tilde{f} \Vert_{X_s^k(\R^n)}\\
&= \frac{1}{2} \frac{d}{d \tau} \Vert S_{X_s^k}(\tau) \tilde{f} \Vert^2_{X_s^k(\R^n)}\\
&= \langle \mathcal{L}_{X_s^k} S_{X_s^k}(\tau) \tilde{f}, S_{X_s^k}(\tau) \tilde{f} \rangle_{X_s^k(\R^n)}\\
&\leq - \omega_s \Vert S_{X_s^k}(\tau) \tilde{f} \Vert_{X_s^k(\R^n)}^2 + |\langle V S_{X_s^k}(\tau) \tilde{f} , S_{X_s^k}(\tau) \tilde{f} \rangle_{X_s^k(\R^n)} |\\
&\leq - \omega_s \Vert S_{X_s^k}(\tau) \tilde{f} \Vert_{X_s^k(\R^n)}^2 + \Vert V S_{X_s^k}(\tau) \tilde{f} \Vert_{X_s^k(\R^n)} \Vert S_{X_s^k}(\tau) \tilde{f} \Vert_{X_s^k(\R^n)},
\end{align*}
for all $f \in C^{\infty}_{c,r}(\R^n)$.
Dividing both sides by the norm of $S_{X_s^k}(\tau) \tilde{f}$ togehter with the results from Lemma \ref{EstimateVf}, Corollary \ref{Estimatesingraph} and Lemma \ref{graphnormestimate} we obtain
\begin{align*}
\frac{d}{d \tau} \Vert S_{X_s^k}(\tau) \tilde{f} \Vert_{X_s^k(\R^n)} &\leq \left( \frac{C}{R^{\varepsilon_V}} - \omega_s \right) \Vert S_{X_s^k}(\tau) \tilde{f} \Vert_{X_s^k(\R^n)} + C_R \sum_{j=0}^k \Vert S_{X_s^k}(\tau) \tilde{f} \Vert_{\mathcal{G}((1-\mathcal{L})^{\frac{j}{2}})}\\
&\leq \left( \frac{C}{R^{\varepsilon_V}} - \omega_s \right) \Vert S_{X_s^k}(\tau) \tilde{f} \Vert_{X_s^k(\R^n)} + C_R e^{-\omega_0 \tau} \Vert \tilde{f} \Vert_{\mathcal{G}((1-\mathcal{L})^{\frac{j}{2}})}\\
&\leq \left( \frac{C}{R^{\varepsilon_V}} - \omega_s \right) \Vert S_{X_s^k}(\tau) \tilde{f} \Vert_{X_s^k(\R^n)} + C_R e^{-\omega_0 \tau} \Vert \tilde{f} \Vert_{X_s^k(\R^n)}.
\end{align*}
Next we choose $R>0$ large enough to get
\begin{align} \label{ChooseR}
\frac{d}{d \tau} \Vert S_{X_s^k}(\tau) \tilde{f} \Vert_{X_s^k(\R^n)} \leq -\frac{\omega_s}{2} \Vert S_{X_s^k}(\tau) \tilde{f} \Vert_{X_s^k(\R^n)} + C e^{-\omega_0 \tau} \Vert \tilde{f} \Vert_{X_s^k(\R^n)},
\end{align}
which is equivalent to
\begin{align*}
\frac{d}{d \tau} \bigl[ e^{\frac{\omega_s}{2} \tau} \Vert S_{X_s^k}(\tau) \tilde{f} \Vert_{X_s^k(\R^n)} \bigl] \leq C e^{(\frac{\omega_s}{2} -\omega_0) \tau} \Vert \tilde{f} \Vert_{X_s^k(\R^n)}.
\end{align*}
Integrating both sides from $0$ to $\tau$ yields
\begin{align*}
e^{\frac{\omega_s}{2} \tau} \Vert S_{X_s^k}(\tau) \tilde{f} \Vert_{X_s^k(\R^n)} - \Vert \tilde{f} \Vert_{X_s^k(\R^n)} \leq C \Vert \tilde{f} \Vert_{X_s^k(\R^n)} \tfrac{e^{(\frac{\omega_s}{2} -\omega_0) \tau}-1}{\frac{\omega_s}{2} -\omega_0},
\end{align*}
which in turn implies
\begin{align*}
\Vert S_{X_s^k}(\tau) \tilde{f} \Vert_{X_s^k(\R^n)} &\leq \Vert \tilde{f} \Vert_{X_s^k(\R^n)} \bigl( e^{-\frac{\omega_s}{2} \tau} + \tfrac{2C}{\omega_s - 2\omega_0} \bigl( e^{-\omega_0 \tau} - e^{-\frac{\omega_s}{2} \tau} \bigl) \bigl) \\
&\lesssim e^{-\omega \tau} \Vert \tilde{f} \Vert_{X_s^k(\R^n)},
\end{align*}
with $\omega := \min \{ \omega_0, \frac{\omega_s}{2} \} >0$ for all $f \in C^{\infty}_{c,r}(\R^n)$. Note that in case $\omega_s = 2 \omega_0$ one has to choose another $R>0$ in \eqref{ChooseR} to avoid this scenario. The statement now follows by density of $C^{\infty}_{c,r}(\R^n)$ in $X_s^k(\R^n)$. 
\end{proof}

\section{The nonlinear time evolution}\label{Sec:Nonlin}

 In the following, we restrict ourselves to real-valued functions from $X_s^k(\R^n)$. We note that by Lemma \ref{InXsk}, the blowup profile satisfies $\phi(|\cdot|) \in X_s^k(\R^n)$, see \eqref{BlowupSol_Trans}.

\subsection{Estimates for the nonlinearity and the initial data operator}

\begin{lemma} \label{Nonlinearityestimate}
The nonlinearity $\mathcal{N}$ from \eqref{Nonlinearity} extends to a map $\mathcal{N}: X_s^k(\R^n) \to  X_s^k(\R^n)$ satisfying 
\begin{align*}
\Vert \mathcal{N}(f) - \mathcal{N}(h) \Vert_{X_s^k(\R^n)} \leq \gamma(\|f\|_{X_s^k(\R^n)},\|g\|_{X_s^k(\R^n)} ) (\Vert f \Vert_{X_s^k(\R^n)} + \Vert h \Vert_{X_s^k(\R^n)}) \Vert f-h \Vert_{X_s^k(\R^n)},
\end{align*}
for all $f,h \in X_s^k(\R^n)$, where $\gamma: [0,\infty) \times [0,\infty) \to [0,\infty)$ is a continuous function.
\end{lemma}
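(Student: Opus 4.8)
The plan is to first rewrite $\mathcal{N}(f)$ in a form in which the apparent singular weight $|\cdot|^{-3}$ has disappeared. The structural facts I would use are that $F=gg'$ is smooth on $\R$ with \emph{all} derivatives bounded (because $g$ is smooth and, by construction, $2a^*$-periodic), and that $F(0)=0$, $F'(0)=1$, $F''(0)=0$; the last equality holds since $g$ is odd, so $g(0)=g''(0)=0$ while $g'(0)=1$. Writing $F=\mathrm{id}+R$ with $R=F-\mathrm{id}$, the identity parts in the bracket of \eqref{Nonlinearity} cancel and, with $\vartheta(y):=|y|\phi(|y|)$,
\begin{align*}
	[\mathcal{N}(f)](y)=\frac{n-3}{|y|^3}\Big(R(\vartheta(y))+R'(\vartheta(y))|y|f(y)-R\big(\vartheta(y)+|y|f(y)\big)\Big).
\end{align*}
Since $R$ vanishes to third order at $0$ we may factor $R(u)=u^3\widetilde R(u)$ with $\widetilde R$ smooth, bounded together with all its derivatives, and $R^{(j)}$ bounded for $j\ge 1$. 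This turns the nonlinearity into a sum of three terms free of the weight $|\cdot|^{-3}$,
\begin{align*}
	[\mathcal{N}(f)](y)=(n-3)\Big(\phi^3\widetilde R(\vartheta)+\phi^2 f\,\big(3\widetilde R(\vartheta)+\vartheta\,\widetilde R'(\vartheta)\big)-\psi^3\widetilde R(|y|\psi)\Big),\qquad \psi:=\phi(|\cdot|)+f,
\end{align*}
in particular $\mathcal{N}(0)=0$.

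The first two terms are handled directly. The radial functions $\vartheta$, $\widetilde R(\vartheta)$, $\widetilde R'(\vartheta)$ are smooth, bounded, with bounded derivatives, and $\phi(|\cdot|)$ decays like $\langle\cdot\rangle^{-1}$ together with all its derivatives; hence $\phi^3\widetilde R(\vartheta)$ and $\phi^2(3\widetilde R(\vartheta)+\vartheta\widetilde R'(\vartheta))$ satisfy the hypotheses of Lemma \ref{InXsk}, so they belong to $X_s^k(\R^n)$. Multiplying by $f$ and using the algebra property from Lemma \ref{embeddinglemma} disposes of the $\phi^2 f(\cdots)$ term; for the associated difference it suffices to replace $f$ by $f-h$.

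The heart of the matter is the composition term $\psi^3\widetilde R(|y|\psi)=|y|^{-3}R(|y|\psi)$. The obstacle is that $|y|\psi$ has no controlled growth at spatial infinity (multiplication by $|y|$ is not bounded on $X_s^k(\R^n)$), so $\widetilde R(|y|\psi)$ can only be controlled through the boundedness of $\widetilde R$ and of all its derivatives, \emph{not} through any decay or Sobolev bound for its argument. I would split $\R^n=B_1\cup B_1^c$. On $B_1$ the argument $|y|\psi$ is bounded by $\lesssim\|\psi\|_{L^\infty}\lesssim\|\psi\|_{X_s^k(\R^n)}$, and the derivatives of $\psi^3\widetilde R(|y|\psi)$ on $B_1$ are estimated, via the chain and Leibniz rules, by the interior bounds of Lemmas \ref{EstimateinBR} and \ref{graphnormestimate} together with Corollary \ref{Estimatesingraph}, exactly as in the proof of Lemma \ref{EstimateVf}. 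On $B_1^c$ the explicit weights $|y|^{-m}$ produced by differentiating $|y|^{-3}$ and $|y|$ are harmless, and $\partial^{\alpha}\big(|y|^{-3}R(|y|\psi)\big)$ expands, by Leibniz and Fa\`a di Bruno, into a finite sum of terms $|y|^{-m}R^{(j)}(|y|\psi)\prod_i\partial^{\gamma_i}(|y|\psi)$. For the $\dot H^k$ part ($|\alpha|=k$) I would estimate each such term in $L^2$ by Hölder, keeping the single highest-order derivative of $\psi$ in $L^2$ and all remaining factors in $L^{\infty}$ (using $X_s^k(\R^n)\hookrightarrow L^\infty$, the boundedness of the $R^{(j)}$, $j\ge1$, and the decay of the explicit weights), invoking Hardy's inequality whenever an explicit weight $|y|^{-l}$ meets a derivative of $\psi$ of order strictly below $k$, with the Hardy exponent chosen as in Lemma \ref{EstimateVf} so that only $\dot H^{k-l}$-norms with $k-l\ge s$ occur. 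For the $\dot H^s$ part one first passes, via \eqref{WspEmb} and the Hardy--Littlewood--Sobolev inequality, to the integer space $\dot W^{\lceil s\rceil,p}(\R^n)$ with $\tfrac1p=\tfrac12+\tfrac{\lceil s\rceil-s}{n}$, then repeats the scheme above with Hölder exponents $\tfrac1p=\tfrac12+\tfrac1q$, controlling the composition factor $\widetilde R(|y|\psi)$ and its derivatives in $L^q$ by the generalized Schauder estimate (\cite{WMglobal}, Proposition A.1). The window $\tfrac n2-1<s<\tfrac n2-1+\tfrac1{2(n-2)}$ from \eqref{sk-range} is precisely what makes the resulting $p$ (and the Hardy exponents) admissible.

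Finally, for the Lipschitz bound I would write $\mathcal{N}(f)-\mathcal{N}(h)=\int_0^1 [D\mathcal{N}(h+\lambda(f-h))](f-h)\,d\lambda$. Differentiating the three explicit terms above in a direction $v$ again yields a finite sum of products and compositions of exactly the same structure, now carrying one distinguished factor equal to $v$ and remaining factors polynomial in $\phi(|\cdot|)$, $\tilde g:=h+\lambda(f-h)$ and their derivatives, composed with bounded smooth functions of $|y|\tilde g$. Estimating these by the scheme of the previous paragraph, extracting $\|v\|_{X_s^k(\R^n)}=\|f-h\|_{X_s^k(\R^n)}$ and bounding the rest by powers of $\|\tilde g\|_{X_s^k(\R^n)}\le\|f\|_{X_s^k(\R^n)}+\|h\|_{X_s^k(\R^n)}$ — with all constants depending continuously on these norms through the $L^\infty$ bounds on the arguments of the compositions — produces the function $\gamma$. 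Specializing to $h=0$ and using $\mathcal{N}(0)=0$ shows in passing that $\mathcal{N}$ maps $X_s^k(\R^n)$ into itself, with $\|\mathcal{N}(f)\|_{X_s^k(\R^n)}\lesssim\gamma(\|f\|_{X_s^k(\R^n)},0)\,\|f\|_{X_s^k(\R^n)}^2$. I expect the $\dot H^s$-estimate of the composition term $\psi^3\widetilde R(|y|\psi)$ — where only boundedness of $\widetilde R$ and its derivatives, not any control of the argument $|y|\psi$, is available — to be the main obstacle, and exactly the reason the fractional functional setting and the generalized Schauder estimate are needed.
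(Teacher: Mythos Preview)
Your plan has a genuine gap in the $\dot H^k$ estimate of the composition term. When you expand $\partial^{\alpha}\big(|y|^{-3}R(|y|\psi)\big)$ on $B_1^c$ by Leibniz and Fa\`a di Bruno, the inner derivatives $\partial^{\gamma_i}(|y|\psi)$ each contribute a term $|y|\,\partial^{\gamma_i}\psi$. In the extreme case $j=|\alpha|=k$ (all $|\gamma_i|=1$, no derivative landing on $|y|^{-3}$) you obtain a contribution of the form
\[
|y|^{\,k-3}\,R^{(k)}\!\big(|y|\psi\big)\,\prod_{i=1}^{k}\partial_{l_i}\psi,
\]
with a net weight $|y|^{k-3}$ that \emph{grows} at infinity. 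Your scheme ``highest derivative of $\psi$ in $L^2$, the rest in $L^\infty$ via $X_s^k\hookrightarrow L^\infty$ and the decay of the explicit weights'' cannot close here: the explicit weight does not decay, and Hardy's inequality only trades \emph{negative} powers of $|y|$ for additional derivatives. The mechanism you are missing is precisely the weighted radial Sobolev inequality (\cite{WMglobal}, Proposition~B.1) that underlies the generalized Schauder estimate; without it the growing $|y|$ factors produced by differentiating the argument $|y|\psi$ cannot be absorbed. You invoke the Schauder estimate only for the fractional $\dot H^s$ part, but it is equally indispensable for the $\dot H^k$ part.

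The paper sidesteps this entirely by a single third-order Taylor identity exploiting $F''(0)=0$: for $u\in C^3$ with $u''(0)=0$,
\[
u(a+c)-u(a+b)-u'(a)(c-b)=(c-b)\!\int_0^1\!\!\int_0^1\!\!\int_0^1\!\big(b+x(c-b)\big)\big(a+w(\cdots)\big)\,u'''\!\big(z(a+w(\cdots))\big)\,dz\,dw\,dx.
\]
Applied with $u=F$, $a=|y|\phi$, $b=|y|f$, $c=|y|h$, the three linear factors in the integrand each carry exactly one $|y|$, which cancels the $|y|^{-3}$ prefactor \emph{before} any differentiation, leaving
\[
\big[\mathcal N(f)-\mathcal N(h)\big](y)=(n-3)\!\int\!\!\!\int\!\!\!\int (h-f)\cdot u_2\cdot u_3\cdot F^{(3)}\!\big(|y|\,v\big)\,dz\,dw\,dx,
\]
with $u_2,u_3,v$ simple linear combinations of $\phi,f,h$. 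This is already in the precise form $u_1u_2u_3\,F^{(3)}(|\cdot|\,v)$ to which the Schauder estimate \eqref{Eq:Schauder} applies directly, for both the $\dot H^{k_1}$ and the $\dot H^k$ norms, and yields the Lipschitz bound in one step (the mapping property follows by setting $h=0$). There is no need for your $B_1/B_1^c$ splitting, the separate treatment of $\mathcal N(f)$, or the derivative-of-$\mathcal N$ integral representation.
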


\begin{proof}
By density if suffices to prove  the inequality for functions belonging to $C^{\infty}_{c,r}(\R^n)$.  
We first establish an auxiliary identity for functions $u \in C^3(\R)$ with $u^{\prime \prime}(0) = 0$. Let $a, b, c \in \R$, then by three times application of the Fundamental Theorem of Calculus we have 
\begin{align*}
&u(a+c) - u(a+b) - u^{\prime}(a)(c-b)\\
&=(c-b) \int_{0}^1 (b + x(c-b)) \int_0^1 (a+ w(b+x(c-b))) \int_0^1 u^{\prime \prime \prime} (z (a+ w(b+x(c-b)))) dz dw dx.
\end{align*}
For the nonlinearity we obtain
\begin{align*}
&[\mathcal{N}(f)- \mathcal{N}(h)](y)\\
&= \frac{n-3}{|y|^3}\bigg(  F \Big (|y| \phi(|y|) + |y| f(y) \Big ) + F \Big (|y| \phi(|y|) + |y| h(y) \Big ) - F' \Big (|y| \phi(|y|) \Big )|y| (h(y) - f(y)) \Big ).
\end{align*}
Since $F$ is an odd function we have $F''(0) = 0$ and the above identity yields
\begin{align*}
\frac{[\mathcal{N}(f)- \mathcal{N}(h)](y)}{n-3} \\
= \int_0^1 \int_0^1 \int_0^1 \Big  (h(y)&-f(y)\Big  ) \Big  (f(y)+x (h(y)-f(y))\Big )\Big (\phi(|y|) + w(f(y) + x(h(y)- f(y)))\Big )\\
&\cdot F^{(3)}\bigg (|y| z \Big(\phi(|y|) + w \big (f(y)+x ( h(y)- f(y)) \big ) \Big )\bigg  ) dz dw dx.
\end{align*}
Note that $F^{(3)}$ is an odd function having all derivatives bounded.  We use the generalized Schauder estimate given in \cite{WMglobal}, Proposition A.1, which reads
\begin{align}\label{Eq:Schauder}
\begin{split}
\| u_1 u_2 u_3 F^{(3)}(|\cdot| v) \|_{X_s^k(\R^n)} &  \lesssim \| u_1 u_2 u_3 F^{(3)}(|\cdot| v) \|_{\dot H^{k_1}\cap \dot H^k(\R^n)}  \\
& \lesssim  \prod_{i = 1} ^{3} \|u_i \|_{X_s^k(\R^n)} \sum_{j = 0}^{k} \|v \|_{X_s^k(\R^n)}^{2j},
\end{split}
\end{align}
for $k_1 := \lfloor\frac{n}{2}-2 \rfloor$, where $u_1,u_2,u_3, v \in C^{\infty}_{c,r }(\R^n)$ and $v$ is real-valued. By inspection, the same bound holds for $u_3$ being replaced by $\phi(|\cdot|)$, respectively for $v \in C^{\infty}_r(\R^n) \cap X^{k}_s(\R^n)$. The proof of \eqref{Eq:Schauder} is based on Hardy's inequality and a Sobolev inequality for weighted derivatives of radial functions, see \cite{WMglobal}, Proposition B.1. 
\end{proof}
We need the following characterization of the initial data operator. 

\begin{lemma} \label{LemmaInitialdata}
Let $0 < \delta \leq \frac{1}{2}$. Then the map 
\begin{align*}
T \mapsto \mathcal{U}(\varphi, T) : [1-\delta, 1+ \delta] \rightarrow X_s^k(\R^n),
\end{align*}
as defined in \eqref{InitialU} is continuous for $\varphi \in X_s^k(\R^n)$. Furthermore, we have 
\begin{align} \label{Initialestimate}
\Vert \mathcal{U}(\varphi, T) \Vert_{X_s^k(\R^n)} \lesssim \Vert \varphi \Vert_{X_s^k(\R^n)} + |T-1|,
\end{align}
for all $\varphi \in X_s^k(\R^n)$ and $T \in [\frac{1}{2}, \frac{3}{2}]$.
\end{lemma}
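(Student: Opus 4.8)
The plan is to reduce everything to the scaling behaviour of homogeneous Sobolev norms. For $\lambda>0$ I would introduce the dilation $D(\lambda)f(x):=\lambda f(\lambda x)$ and record, from \eqref{InitialU}, that
\[ \mathcal{U}(\varphi,T)=D(\sqrt{T})\varphi+\bigl(D(\sqrt{T})-I\bigr)\phi(|\cdot|). \]
A Plancherel computation gives $\|D(\lambda)f\|_{\dot{H}^{\sigma}(\R^n)}=\lambda^{1+\sigma-\frac{n}{2}}\|f\|_{\dot{H}^{\sigma}(\R^n)}$ for every $\sigma\geq 0$; since \eqref{sk-range} forces $1+s-\frac{n}{2}>0$ and $1+k-\frac{n}{2}>0$, the operators $D(\lambda)$ are bounded on $X_s^k(\R^n)$, uniformly for $\lambda$ in compact subsets of $(0,\infty)$, and in particular $\|D(\sqrt{T})\|_{\mathcal{L}(X_s^k(\R^n))}\lesssim 1$ for $T\in[\tfrac12,\tfrac32]$. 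This already disposes of the first summand: $\|D(\sqrt{T})\varphi\|_{X_s^k(\R^n)}\lesssim\|\varphi\|_{X_s^k(\R^n)}$.

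For the second summand I would use the explicit profile. Because $\phi(\rho)=a(\rho^2+b)^{-1/2}$ one has the algebraic identity $\sqrt{T}\,\phi(\sqrt{T}\rho)=a(\rho^2+b/T)^{-1/2}$, so, writing $\Psi(c):=a(|\cdot|^2+c)^{-1/2}$, the difference is $\bigl(D(\sqrt{T})-I\bigr)\phi(|\cdot|)=\Psi(b/T)-\Psi(b)$, i.e.\ a displacement of the parameter $c$ by $|b/T-b|=b|T-1|/T$. Note that $b>0$: this follows from \eqref{ab} together with $d\geq 4$ and $\gamma\in(0,\sqrt{2}-1)$. The key point is that as $c$ ranges over the compact set $[2b/3,2b]$ (the range of $b/T$ for $T\in[\tfrac12,\tfrac32]$), the functions $(|\cdot|^2+c)^{-1/2}$ and their formal $c$-derivatives $-\tfrac12(|\cdot|^2+c)^{-3/2}$ satisfy the decay bounds of Lemma \ref{InXsk} uniformly in $c$, hence lie in $X_s^k(\R^n)$ with norm bounded uniformly in $c$, and $c\mapsto\Psi(c)$ is a $C^1$ curve in $X_s^k(\R^n)$ with $\Psi'(c)=-\tfrac a2(|\cdot|^2+c)^{-3/2}$. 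Writing $\Psi(b/T)-\Psi(b)=\int_b^{b/T}\Psi'(c)\,dc$ as a Bochner integral in $X_s^k(\R^n)$ then gives
\[ \bigl\|\bigl(D(\sqrt{T})-I\bigr)\phi(|\cdot|)\bigr\|_{X_s^k(\R^n)}\leq \tfrac a2\,|b/T-b|\sup_{c\in[2b/3,2b]}\|(|\cdot|^2+c)^{-3/2}\|_{X_s^k(\R^n)}\lesssim |T-1| , \]
since $|b/T-b|\leq 2b|T-1|$ on $[\tfrac12,\tfrac32]$. Together with the first estimate this yields \eqref{Initialestimate}. (Equivalently, one can differentiate $\mu\mapsto D(\mu)\phi(|\cdot|)$ directly, using $\tfrac{d}{d\mu}D(\mu)g=\tfrac2\mu D(\mu)\Lambda g$ with $\Lambda$ as in \eqref{Lambdaop} and $\Lambda(\phi(|\cdot|))=\tfrac{ab}{2}(|\cdot|^2+b)^{-3/2}$, which is a multiple of $G$; this is the expected fact that the dilation derivative of the profile is the time-translation mode.)

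For continuity I would argue that, for $T_1,T_2\in[1-\delta,1+\delta]$,
\[ \mathcal{U}(\varphi,T_1)-\mathcal{U}(\varphi,T_2)=\bigl(D(\sqrt{T_1})-D(\sqrt{T_2})\bigr)h=D(\sqrt{T_2})\bigl(D(\sqrt{T_1/T_2})-I\bigr)h,\qquad h:=\varphi+\phi(|\cdot|)\in X_s^k(\R^n), \]
where $\phi(|\cdot|)\in X_s^k(\R^n)$ by Lemma \ref{InXsk}. By the uniform operator bound above it then suffices to prove strong continuity of the dilation family at $\lambda=1$, that is, $\|(D(\mu)-I)h\|_{X_s^k(\R^n)}\to 0$ as $\mu\to 1$ for every $h\in X_s^k(\R^n)$. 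For $h\in C^{\infty}_{c,r}(\R^n)$ this follows from dominated convergence on the Fourier side (with a dominating function obtained from the rapid decay of $\widehat h$), exactly as in the proof of strong continuity of $(S_0(\tau))_{\tau\geq 0}$ on $X_s^k(\R^n)$; for general $h$ one combines this with the density of $C^{\infty}_{c,r}(\R^n)$ in $X_s^k(\R^n)$ and the uniform bound on $\|D(\mu)\|_{\mathcal{L}(X_s^k(\R^n))}$ for $\mu$ near $1$, in a standard three-$\varepsilon$ argument.

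I expect the only genuinely delicate point to be the assertion that $c\mapsto\Psi(c)$ is differentiable in the \emph{fractional} intersection space $X_s^k(\R^n)$, i.e.\ that the elementary pointwise estimates for the difference quotients survive once measured in the $\dot{H}^s$-part of the norm. This is precisely what Lemma \ref{InXsk} handles: it converts the uniform pointwise decay of the difference quotients of $\Psi$ (and of $\partial^{\alpha}\Psi'(c)$) into $X_s^k(\R^n)$-bounds, after which the Bochner integral and the estimate above are immediate. The remaining ingredients — the scaling identity for $\|D(\lambda)\,\cdot\,\|_{\dot{H}^{\sigma}}$, the Fourier-side dominated convergence, and the three-$\varepsilon$ argument — are routine.
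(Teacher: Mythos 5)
Your proof is correct and follows essentially the same strategy as the paper: reduce the $T$-dependence to a dilation, handle the linear term by the $\dot H^\sigma$ scaling identity, handle the difference term by a Fundamental-Theorem-of-Calculus argument whose integrand is shown to lie in $X_s^k(\R^n)$ via Lemma \ref{InXsk}, and prove continuity by approximating in $C^\infty_{c,r}(\R^n)$ together with the uniform operator bound on $D(\sqrt{T})$.

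The one place where you diverge from the paper is the parametrization of the FTC step. The paper works directly with the dilation parameter, writing $\phi(\sqrt{T}z)-\phi(z)=z(\sqrt{T}-1)\int_0^1\phi'\bigl(z(1+(\sqrt{T}-1)\tau)\bigr)\,d\tau$ and then estimating $\bigl\|\,|\cdot|^2(t_\tau^2|\cdot|^2+b)^{-3/2}\bigr\|_{X_s^k}$ uniformly in $\tau$; this also forces the extra bookkeeping of splitting $\sqrt{T}\phi(\sqrt{T}\cdot)-\phi$ into $\sqrt{T}(\phi(\sqrt{T}\cdot)-\phi)$ plus $(\sqrt{T}-1)\phi$. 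You instead exploit the algebraic identity $\sqrt{T}\phi(\sqrt{T}\rho)=a(\rho^2+b/T)^{-1/2}$ and integrate in the shift parameter $c$, which absorbs the full operator $D(\sqrt{T})-I$ in one step and produces a $\tau$-independent integrand $\Psi'(c)=-\tfrac a2(|\cdot|^2+c)^{-3/2}$. This is marginally cleaner and, as you note, makes the identification of $\partial_\mu D(\mu)\phi|_{\mu=1}$ with a multiple of the mode $G$ transparent, at the cost of relying on the specific form of $\phi$; the paper's version would work for any smooth profile with the right decay. Both routes rest on the same two pillars (dilation scaling on $X_s^k(\R^n)$ and Lemma \ref{InXsk}), so the difference is cosmetic rather than structural. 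The one point you should make a bit more explicit if you were to write this out fully is that the Bochner integral $\int_b^{b/T}\Psi'(c)\,dc$ really equals $\Psi(b/T)-\Psi(b)$ in $X_s^k(\R^n)$; continuity of $c\mapsto\Psi'(c)$ in $X_s^k(\R^n)$ on $[2b/3,2b]$ (which again follows from Lemma \ref{InXsk}-type decay of the $c$-difference quotients of $\Psi'$) plus the pointwise identity and the embedding $X_s^k(\R^n)\hookrightarrow L^\infty(\R^n)$ settles this, but it is worth saying.
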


\begin{proof}
To show continuity of the given map, let $\varphi \in X_s^k(\R^n)$, $T_1, T_2 \in [1- \delta, 1+ \delta]$ and write 
\begin{align*}
 \mathcal{U}(\varphi, T_1) -  \mathcal{U}(\varphi, T_2) = (\sqrt{T_1} - \sqrt{T_2})[\phi + \varphi](\sqrt{T_1} \cdot) + \sqrt{T_2} ( [\phi + \varphi](\sqrt{T_1} \cdot) - [\phi + \varphi](\sqrt{T_2} \cdot) ).
\end{align*}
Next let $\varepsilon >0$, then there exists $\chi \in C^{\infty}_{c,r}(\R^n)$, such that $\Vert \chi - [\phi + \varphi] \Vert_{X_s^k(\R^n)} \leq \varepsilon$. By this we can write the second term from above as
\begin{align*}
[\phi + \varphi](\sqrt{T_1} \cdot) - [\phi + \varphi](\sqrt{T_2} \cdot) &= ([\phi + \varphi](\sqrt{T_1} \cdot) - \chi(\sqrt{T_1} \cdot)) + (\chi (\sqrt{T_1} \cdot) - \chi (\sqrt{T_2} \cdot))\\
& +( \chi (\sqrt{T_2} \cdot) - [\phi + \varphi](\sqrt{T_2} \cdot)),
\end{align*}
to infer
\begin{align*}
\lim_{T_2 \rightarrow T_1} \Vert  \mathcal{U}(\varphi, T_1) -  \mathcal{U}(\varphi, T_2) \Vert_{X_s^k(\R^n)} \leq C \varepsilon,
\end{align*}
for some $C>0$. Here we used the fact that $\lim_{T_2 \rightarrow T_1} \Vert \chi (\sqrt{T_1} \cdot) - \chi (\sqrt{T_2} \cdot) \Vert_{X_s^k(\R^n)} =0$, as the function $\chi$ is smooth and compactly supported. As $\varepsilon >0$ was chosen arbitrarily, continuity follows. Next we show \eqref{Initialestimate}. Let $\varphi \in X_s^k(\R^n)$ and $T \in [\frac{1}{2}, \frac{3}{2} ]$, then we have
\begin{align*}
\Vert \mathcal{U}(\varphi, T) \Vert_{X_s^k(\R^n)} &= \Vert \sqrt{T} \varphi(\sqrt{T} \cdot) + \sqrt{T} \phi(\sqrt{T} \cdot) - \phi \Vert_{X_s^k(\R^n)} \\
&\leq  \sqrt{T} \Vert \varphi(\sqrt{T} \cdot)  \Vert_{X_s^k(\R^n)} + \sqrt{T} \Vert  \phi(\sqrt{T} \cdot) - \phi \Vert_{X_s^k(\R^n)} + | \sqrt{T} -1| \Vert \phi \Vert_{X_s^k(\R^n)} \\
&\lesssim \Vert \varphi \Vert_{X_s^k(\R^n)} + |T-1| \Vert \phi \Vert_{X_s^k(\R^n)} + \Vert  \phi(\sqrt{T} \cdot) - \phi \Vert_{X_s^k(\R^n)}.
\end{align*}
To estimate the remaining term we do the following. For $z \in \R^+$ we obtain
\begin{align*}
\phi( \sqrt{T} z) - \phi (z) = z (\sqrt{T}-1) \int_0^1 \phi^{\prime}(z ((\sqrt{T} -1) \tau +1)) d \tau,
\end{align*}
by the Fundamental Theorem of Calculus, which implies
\begin{align*}
\frac{\Vert \phi(\sqrt{T} \cdot) - \phi \Vert_{X_s^k(\R^n)}} {|\sqrt{T}-1|} &= \tilde{C} \bigg\Vert \int_0^1 \frac{|1+(\sqrt{T}-1) \tau| | \cdot |^2}{(|1+(\sqrt{T}-1) \tau|^2 | \cdot |^2 +b)^{\frac{3}{2}}} d \tau \bigg\Vert_{X_s^k(\R^n)}\\
&\leq \tilde{C} \int_0^1 \frac{1}{|t_{\tau}|} \bigg\Vert \frac{|t_{\tau}|^2 | \cdot |^2}{(|t_{\tau}|^2 | \cdot |^2 +b)^{\frac{3}{2}}} \bigg\Vert_{X_s^k(\R^n)} d \tau,
\end{align*}
for some $\tilde{C}>0$ and $t_{\tau} := 1+(\sqrt{T}-1) \tau$. Note that 
\begin{align*}
\bigg\Vert \frac{|t_{\tau}|^2 | \cdot |^2}{(|t_{\tau}|^2 | \cdot |^2 +b)^{\frac{3}{2}}} \bigg\Vert_{X_s^k(\R^n)} \leq \max \{ |t_{\tau}|^{s-\frac{n}{2}}, |t_{\tau}|^{k-\frac{n}{2}} \} \bigg\Vert \frac{| \cdot |^2}{(|\cdot |^2 +b)^{\frac{3}{2}}} \bigg\Vert_{X_s^k(\R^n)} \lesssim 1,
\end{align*} 
by Lemma \ref{InXsk}, as we have $\bigg| \partial^{\alpha} \bigg( \frac{| \cdot |^2}{(|\cdot |^2 +b)^{\frac{3}{2}}} \bigg) (x) \bigg| \lesssim \langle x \rangle^{-1-|\alpha|}$ for $x \in \R^n$ and $\alpha \in \N_0^n$. This shows
\begin{align*}
\Vert \phi(\sqrt{T} \cdot) - \phi \Vert_{X_s^k(\R^n)} \lesssim |\sqrt{T}-1| \lesssim |T-1|,
\end{align*}
which in turn implies
\begin{align*}
\Vert \mathcal{U}(\varphi, T) \Vert_{X_s^k(\R^n)} \lesssim \Vert \varphi \Vert_{X_s^k(\R^n)} + |T-1|,
\end{align*}
as claimed.
\end{proof}

\subsection{Construction of strong solutions}

To show existence of strong solutions to \eqref{Centralproblem} we consider Duhamel's formula 
\begin{align} \label{Duhamel}
\varphi(\tau) = S_{X_s^k}(\tau) \mathcal{U}(\varphi_0, T) + \int_0^{\tau} S_{X_s^k}(\tau - \tau^{\prime}) \mathcal{N}(\varphi(\tau^{\prime})) d \tau^{\prime}, \quad \tau \geq 0.
\end{align}
We introduce the Banach space 
\begin{align*}
\mathcal{X}_s^k := \{ \varphi \in C([0, \infty), X_s^k(\R^n)) : \Vert \varphi \Vert_{\mathcal{X}_s^k} := \sup_{\tau \geq 0} e^{\omega \tau} \Vert \varphi(\tau) \Vert_{X_s^k} < \infty \}, 
\end{align*}
where $\omega >0$ is the constant from Proposition \ref{Semigroupdecay}. We denote the ball in $\mathcal{X}_s^k$ with radius $\delta >0$ by $\mathcal{X}_s^k(\delta) := \{ \varphi \in \mathcal{X}_s^k : \Vert \varphi \Vert_{\mathcal{X}_s^k} \leq \delta \}$. 
To run a fixed point argument, we define the operator 
\begin{align*}
[K(\varphi, u)](\tau) := S_{X_s^k}(\tau)\big (u - \mathcal{C}(\varphi, u)\big) + \int_0^{\tau} S_{X_s^k}(\tau - \tau^{\prime}) \mathcal{N}(\varphi(\tau^{\prime})) d \tau^{\prime},
\end{align*}
for $\varphi \in \mathcal{X}_s^k$, $u \in X_s^k(\R^n)$ and $\tau \geq 0$, where 
\begin{align*}
\mathcal{C}(\varphi, u) := \mathcal{P}_{X_s^k} u + \int_0^{\infty} e^{-\tau^{\prime}} \mathcal{P}_{X_s^k} \mathcal{N}(\varphi(\tau^{\prime})) d \tau^{\prime},
\end{align*}
is a correction term on the unstable subspace $\mathcal{P}_{X_s^k} \bigl( X_s^k(\R^n) \bigl)$. 

\begin{lemma} \label{ContractionofK}
For all sufficiently small $\delta >0$, all sufficiently large $c>0$ and $u \in X_s^k(\R^n)$ with $\Vert u \Vert_{X_s^k(\R^n)} \leq \frac{\delta}{c}$, the operator $K(\cdot , u)$ maps the ball $\mathcal{X}_s^k(\delta)$ into itself and satisfies
\begin{align} \label{contraction}
\Vert K(\varphi_1, u) - K(\varphi_2, u) \Vert_{\mathcal{X}_s^k} \leq \frac{1}{2} \Vert \varphi_1 - \varphi_2 \Vert_{\mathcal{X}_s^k},
\end{align}
for all $\varphi_1, \varphi_2 \in \mathcal{X}_s^k(\delta)$ and all $u \in X_s^k(\R^n)$.
\end{lemma}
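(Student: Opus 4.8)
The plan is a Lyapunov--Perron fixed point argument in which the correction term $\mathcal{C}(\varphi,u)$ is engineered to annihilate the exponentially growing contribution coming from the one-dimensional unstable subspace $\mathcal{P}_{X_s^k}\bigl(X_s^k(\R^n)\bigr)=\mathrm{span}(G)$. The structural observation that makes everything work is the following: since $\mathcal{C}(\varphi,u)$ lies in the range of the projection $\mathcal{P}_{X_s^k}$, one has $(1-\mathcal{P}_{X_s^k})\bigl(u-\mathcal{C}(\varphi,u)\bigr)=(1-\mathcal{P}_{X_s^k})u$ and $\mathcal{P}_{X_s^k}\bigl(u-\mathcal{C}(\varphi,u)\bigr)=-\int_0^\infty e^{-\tau'}\mathcal{P}_{X_s^k}\mathcal{N}(\varphi(\tau'))\,d\tau'$. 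Because $S_{X_s^k}(\tau)$ acts as multiplication by $e^\tau$ on $\mathrm{span}(G)$, projecting $K(\varphi,u)(\tau)$ onto the unstable mode collapses the two unstable terms into the single convergent tail $\mathcal{P}_{X_s^k}[K(\varphi,u)](\tau)=-\int_\tau^\infty e^{\tau-\tau'}\mathcal{P}_{X_s^k}\mathcal{N}(\varphi(\tau'))\,d\tau'$, which decays in $\tau$. This is the only genuinely structural step; the rest combines the semigroup decay from Proposition \ref{Semigroupdecaypropo}, the Lipschitz bound from Lemma \ref{Nonlinearityestimate}, and the boundedness of $\mathcal{P}_{X_s^k}$ on $X_s^k(\R^n)$.

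First I would record that $\mathcal{N}(0)=0$, which is immediate from \eqref{Nonlinearity}, so that Lemma \ref{Nonlinearityestimate} yields $\|\mathcal{N}(f)-\mathcal{N}(h)\|_{X_s^k}\lesssim_\delta(\|f\|_{X_s^k}+\|h\|_{X_s^k})\|f-h\|_{X_s^k}$ and in particular $\|\mathcal{N}(f)\|_{X_s^k}\lesssim_\delta\|f\|_{X_s^k}^2$ whenever $\|f\|_{X_s^k},\|h\|_{X_s^k}\le\delta$, the implicit constant being the supremum of the continuous function $\gamma$ over $[0,\delta]^2$. For $\varphi\in\mathcal{X}_s^k(\delta)$ this gives $\|\mathcal{N}(\varphi(\tau'))\|_{X_s^k}\lesssim_\delta\delta^2 e^{-2\omega\tau'}$. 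Then I would split $K(\varphi,u)(\tau)$ into its $(1-\mathcal{P}_{X_s^k})$-part and its $\mathcal{P}_{X_s^k}$-part. For the stable part, commuting $(1-\mathcal{P}_{X_s^k})$ past the semigroup and the Duhamel integral and invoking Proposition \ref{Semigroupdecaypropo} bounds it by a constant times $e^{-\omega\tau}\|u\|_{X_s^k}+\int_0^\tau e^{-\omega(\tau-\tau')}\delta^2 e^{-2\omega\tau'}\,d\tau'\lesssim e^{-\omega\tau}\bigl(\|u\|_{X_s^k}+\delta^2\bigr)$; for the unstable part, the tail identity above together with $\|\mathcal{P}_{X_s^k}g\|_{X_s^k}\lesssim\|g\|_{X_s^k}$ gives a bound by $\int_\tau^\infty e^{\tau-\tau'}\delta^2 e^{-2\omega\tau'}\,d\tau'\lesssim\delta^2 e^{-2\omega\tau}\le\delta^2 e^{-\omega\tau}$. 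Multiplying by $e^{\omega\tau}$ and taking the supremum over $\tau\ge0$ yields $\|K(\varphi,u)\|_{\mathcal{X}_s^k}\le C_\delta(\|u\|_{X_s^k}+\delta^2)\le C_\delta(\delta/c+\delta^2)$, which is at most $\delta$ once $c$ is taken large and then $\delta$ small. That $K(\varphi,u)$ actually lies in $\mathcal{X}_s^k$, i.e. is continuous in $\tau$ with values in $X_s^k(\R^n)$, follows from strong continuity of $(S_{X_s^k}(\tau))_{\tau\ge0}$ and the standard regularity of the Duhamel integral.

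For the contraction bound \eqref{contraction}, observe that in $K(\varphi_1,u)-K(\varphi_2,u)$ every $u$-dependent term cancels, leaving $S_{X_s^k}(\tau)\bigl(\mathcal{C}(\varphi_2,u)-\mathcal{C}(\varphi_1,u)\bigr)+\int_0^\tau S_{X_s^k}(\tau-\tau')\bigl(\mathcal{N}(\varphi_1(\tau'))-\mathcal{N}(\varphi_2(\tau'))\bigr)\,d\tau'$, so the splitting and estimates above apply verbatim with $\mathcal{N}(\varphi(\tau'))$ replaced throughout by the difference $\mathcal{N}(\varphi_1(\tau'))-\mathcal{N}(\varphi_2(\tau'))$, which by the Lipschitz bound is controlled by a constant times $\delta e^{-2\omega\tau'}\|\varphi_1-\varphi_2\|_{\mathcal{X}_s^k}$. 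This produces $\|K(\varphi_1,u)-K(\varphi_2,u)\|_{\mathcal{X}_s^k}\le C_\delta\,\delta\,\|\varphi_1-\varphi_2\|_{\mathcal{X}_s^k}$, so shrinking $\delta$ so that $C_\delta\delta\le\tfrac12$ gives \eqref{contraction}, uniformly in $u$. The only point requiring a little care --- and the closest thing here to an obstacle --- is the compatibility of the smallness choices: since the constants $C_\delta$ are monotone in $\delta$, one fixes $\delta$ first to meet both the into-the-ball requirement and the contraction requirement, and only afterwards chooses $c$ large enough that $C_\delta\delta/c$ is absorbed; all of this is independent of $u$ in the admissible range $\|u\|_{X_s^k}\le\delta/c$.
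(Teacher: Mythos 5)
Your proof is correct and follows essentially the same route as the paper's: rewrite $K(\varphi,u)(\tau)$ by splitting across $1-\mathcal{P}_{X_s^k}$ and $\mathcal{P}_{X_s^k}$ so the unstable contribution collapses into the convergent tail $-\int_\tau^\infty e^{\tau-\tau'}\mathcal{P}_{X_s^k}\mathcal{N}(\varphi(\tau'))\,d\tau'$, then apply the semigroup decay from Proposition \ref{Semigroupdecaypropo} on the stable part and the Lipschitz bound from Lemma \ref{Nonlinearityestimate} with $\mathcal{N}(0)=0$ for the quadratic estimate, and choose $\delta$ small and $c$ large (in that order). You are slightly more explicit than the paper in deriving the split formula and in keeping $\mathcal{P}_{X_s^k}$ inside the tail term, but the decomposition, the estimates, and the smallness bookkeeping are identical to what appears in the paper.
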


\begin{proof}
First note that we have
\begin{align*}
[K(\varphi, u)](\tau) &= (1- \mathcal{P}_{X_s^k}) S_{X_s^k}(\tau)  u + \int_0^{\tau} (1- \mathcal{P}_{X_s^k}) S_{X_s^k}(\tau -\tau^{\prime}) \mathcal{N}(\varphi(\tau^{\prime})) d \tau^{\prime} \\
&-\int_{\tau}^{\infty} e^{\tau-\tau^{\prime}} \mathcal{N}(\varphi(\tau^{\prime})) d \tau^{\prime},
\end{align*}
for $\varphi \in \mathcal{X}_s^k$ and $u \in X_s^k(\R^n)$, which yields the bound
\begin{align*}
\Vert [K(\varphi, u)](\tau) \Vert_{X_s^k(\R^n)} &\lesssim e^{-\omega \tau} \Vert u \Vert_{X_s^k(\R^n)} + \int_0^{\tau} e^{-\omega(\tau - \tau^{\prime})} \Vert \varphi(\tau^{\prime})\Vert_{X_s^k(\R^n)}^2 d \tau^{\prime} \\
&+ \int_{\tau}^{\infty} e^{-(\tau^{\prime} -\tau)} \Vert \varphi(\tau^{\prime})\Vert_{X_s^k(\R^n)}^2 d \tau^{\prime}\\
&\lesssim e^{-\omega \tau} \delta \left ( \tfrac{1}{c} + \tfrac{\delta}{\omega} + \tfrac{\delta}{1+2 \omega} \right),
\end{align*}
for $\varphi \in \mathcal{X}_s^k(\delta)$ and $u \in X_s^k(\R^n)$ satisfying $\Vert u \Vert_{X_s^k(\R^n)} \leq \frac{\delta}{c}$ by Proposition \ref{Semigroupdecaypropo} and Lemma \ref{Nonlinearityestimate}, which yields
\begin{align*}
\Vert K(\varphi, u) \Vert_{\mathcal{X}_s^k} \lesssim \delta
\end{align*}
for suitably chosen $\delta, c >0$.\\
\\
Next let $\varphi_1, \varphi_2 \in \mathcal{X}_s^k(\delta)$, $u \in X_s^k(\R^n)$ and $\tau \geq 0$, then we have
\begin{align*}
\Vert \mathcal{N}(\varphi_1(\tau)) - \mathcal{N}(\varphi_2(\tau)) \Vert_{X_s^k(\R^n)} &\lesssim \delta e^{-\omega \tau} \Vert \varphi_1(\tau) - \varphi_2(\tau) \Vert_{X_s^k(\R^n)}\\
& \lesssim \delta e^{-2\omega \tau} \Vert \varphi_1 - \varphi_2 \Vert_{\mathcal{X}_s^k},
\end{align*}
by Lemma \ref{Nonlinearityestimate}, which implies
\begin{align*}
\Vert K(\varphi_1, u) - K(\varphi_2, u) \Vert_{X_s^k(\R^n)} &\lesssim e^{-\omega \tau} \delta \left ( \tfrac{1}{\omega} + \tfrac{1}{1+2 \omega} \right ) \Vert \varphi_1 - \varphi_2 \Vert_{\mathcal{X}_s^k}.
\end{align*}
By choosing $\delta >0$ sufficiently small we infer
\begin{align*}
\Vert K(\varphi_1, u) - K(\varphi_2, u) \Vert_{\mathcal{X}_s^k} \leq \frac{1}{2} \Vert \varphi_1 - \varphi_2 \Vert_{\mathcal{X}_s^k},
\end{align*} 
and this shows \eqref{contraction}.
\end{proof}

With Lemma \ref{ContractionofK} at hand we are able to construct strong solutions to \eqref{Centralproblem}.

\begin{theorem} \label{Theoremsolution}
There exists $M>0$ sufficiently large and $\delta >0$ sufficiently small, such that for all real-valued $\varphi_0 \in X_s^k(\R^n)$ with $\Vert \varphi_0 \Vert_{X_s^k} \leq \frac{\delta}{M^2}$, there exists a $T = T(\varphi_0) \in [1- \frac{\delta}{M}, 1+ \frac{\delta}{M} ]$ and a unique solution $\varphi \in C([0, \infty), X_s^k(\R^n))$ satisfying \eqref{Duhamel} for all $\tau \geq 0$, such that 
\begin{align} \label{estimateforvarphi}
\Vert \varphi(\tau) \Vert_{X_s^k(\R^n)} \leq \delta e^{- \omega \tau}, \quad \forall \tau \geq 0.
\end{align}
\end{theorem}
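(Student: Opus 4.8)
The plan is to run a Lyapunov--Perron type fixed point argument in the Banach space $\mathcal{X}_s^k$ of exponentially decaying trajectories, using Lemma \ref{ContractionofK} to produce a fixed point of $K(\cdot,\mathcal{U}(\varphi_0,T))$ for a suitable choice of blowup time $T$. First I would fix $\delta>0$ small and $c>0$ large as provided by Lemma \ref{ContractionofK}, and then, given $\varphi_0$ with $\|\varphi_0\|_{X_s^k}\leq \delta/M^2$ for $M$ to be chosen (with $M\simeq c$), observe via Lemma \ref{LemmaInitialdata} that $\|\mathcal{U}(\varphi_0,T)\|_{X_s^k}\lesssim \|\varphi_0\|_{X_s^k}+|T-1|\lesssim \delta/M^2+\delta/M\lesssim \delta/c$ uniformly for $T\in[1-\delta/M,1+\delta/M]$. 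Hence $u:=\mathcal{U}(\varphi_0,T)$ satisfies the smallness hypothesis of Lemma \ref{ContractionofK}, so $K(\cdot,u):\mathcal{X}_s^k(\delta)\to\mathcal{X}_s^k(\delta)$ is a contraction and the Banach fixed point theorem yields a unique $\varphi=\varphi(\cdot;T)\in\mathcal{X}_s^k(\delta)$ with $\varphi=K(\varphi,u)$; in particular \eqref{estimateforvarphi} holds.

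The key remaining point is that a fixed point of $K(\cdot,u)$ is \emph{not} automatically a solution of the Duhamel equation \eqref{Duhamel}: by the computation in the proof of Lemma \ref{ContractionofK}, $\varphi=K(\varphi,u)$ solves \eqref{Duhamel} if and only if the correction term vanishes on the unstable subspace, i.e.
\begin{align*}
\mathcal{C}(\varphi,u)=\mathcal{P}_{X_s^k}u+\int_0^\infty e^{-\tau'}\mathcal{P}_{X_s^k}\mathcal{N}(\varphi(\tau'))\,d\tau'=0.
\end{align*}
Since the unstable subspace $\mathcal{P}_{X_s^k}(X_s^k(\R^n))$ is one-dimensional, spanned by $G$, and $\mathcal{P}_{X_s^k}u=\langle u,G\rangle_{\mathcal H}G$ with $u=\mathcal{U}(\varphi_0,T)$, this is a single scalar equation in the single real parameter $T$. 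The strategy is therefore to show that the map
\begin{align*}
F(T):=\Big\langle\, \mathcal{U}(\varphi_0,T)+\int_0^\infty e^{-\tau'}\mathcal{N}\big(\varphi(\tau';T)\big)\,d\tau',\;G\,\Big\rangle_{\mathcal H}
\end{align*}
has a zero in $[1-\delta/M,1+\delta/M]$. For this I would first argue continuity of $T\mapsto\varphi(\cdot;T)$ in $\mathcal{X}_s^k$ (this follows from continuity of $T\mapsto\mathcal{U}(\varphi_0,T)$ by Lemma \ref{LemmaInitialdata}, together with the uniform contraction estimate \eqref{contraction}, a standard consequence of the fixed point depending continuously on parameters), hence continuity of $F$. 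Then I would extract the leading-order $T$-dependence: writing $\mathcal{U}(\varphi_0,T)=(\sqrt T-1)\Lambda\phi(|\cdot|)+O(|T-1|^2)+\sqrt T\,\varphi_0(\sqrt T\,\cdot)$ and using $\Lambda\phi(|\cdot|)\propto G$ with a nonzero proportionality constant (since $G$ is, up to normalization, $\phi(|\cdot|)^3$ and $\langle\Lambda\phi(|\cdot|),G\rangle_{\mathcal H}\neq0$, which can be checked from the explicit $\phi$), the term $\langle\mathcal{U}(\varphi_0,T),G\rangle_{\mathcal H}$ is, to leading order, a nonzero constant times $(T-1)$, while the $\varphi_0$-contribution is $O(\delta/M^2)$ and the nonlinear integral is $O(\delta^2)$. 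Consequently $F(1-\delta/M)$ and $F(1+\delta/M)$ have opposite signs for $\delta$ small and $M$ large, and the intermediate value theorem gives the desired $T=T(\varphi_0)$.

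The main obstacle is the final step of \emph{controlling the signs of $F$ at the endpoints}: one must verify that the genuine $(T-1)$-linear term coming from the time-translation mode dominates both the $\varphi_0$-perturbation and the nonlinear remainder on the interval $[1-\delta/M,1+\delta/M]$. This requires (i) identifying the precise nonvanishing constant $\langle\Lambda\phi(|\cdot|),G\rangle_{\mathcal H}$ from the explicit profile \eqref{Shrinker}, (ii) a clean quantitative expansion of $\mathcal{U}(\varphi_0,T)$ in $T$ with error $O(|T-1|^2)$ in the $X_s^k$ norm, refining the Lipschitz bound \eqref{Initialestimate}, and (iii) bounding $\|\int_0^\infty e^{-\tau'}\mathcal{N}(\varphi(\tau';T))\,d\tau'\|_{X_s^k}\lesssim\delta^2$ uniformly in $T$ via Lemma \ref{Nonlinearityestimate} and \eqref{estimateforvarphi}. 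Once the balance of scales $\delta/M^2\ll\delta/M$ and $\delta^2\ll\delta/M$ is arranged (which fixes how large $M$ must be relative to $c$ and how small $\delta$ must be), uniqueness of $\varphi$ for the selected $T$ is inherited from the uniqueness in the fixed point theorem, completing the proof. Details follow the corresponding argument in \cite{YangMills} and \cite{BieDonSch17}.
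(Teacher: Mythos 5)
Your proposal follows the paper's own proof essentially step for step: fixed point of $K(\cdot,\mathcal{U}(\varphi_0,T))$ in $\mathcal{X}_s^k(\delta)$ for each $T$ via Lemma \ref{ContractionofK}, continuity of $T\mapsto\varphi(\cdot;T)$ from the uniform contraction constant, a Taylor expansion of $\mathcal{U}(\varphi_0,T)$ in $T$ about $T=1$ whose linear coefficient is proportional to $G$ (your observation that this constant is $\langle\Lambda\phi(|\cdot|),G\rangle_{\mathcal H}\neq0$ is exactly what makes the paper's ``$C\neq0$'' true, since $\Lambda\phi=\tfrac{ab}{2}(|\cdot|^2+b)^{-3/2}\propto G$), and an intermediate value argument to kill the one-dimensional correction term. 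The paper phrases the last step as finding a fixed point of the self-map $T\mapsto F(T)+1$ on $[1-\delta/M,1+\delta/M]$ rather than as a sign change of $F$ at the endpoints, but the two formulations are equivalent.

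The one point you treat too lightly is uniqueness. You assert that ``uniqueness of $\varphi$ for the selected $T$ is inherited from the uniqueness in the fixed point theorem,'' but the fixed point theorem only yields uniqueness of fixed points of $K(\cdot,u)$ in the closed ball $\mathcal{X}_s^k(\delta)$, i.e.\ among exponentially decaying solutions of the \emph{modified} equation. The theorem's claim is uniqueness of solutions of the actual Duhamel equation \eqref{Duhamel} in all of $C([0,\infty),X_s^k(\R^n))$, without assuming exponential decay a priori. To close this, one must either (a) show that any solution of \eqref{Duhamel} in $\mathcal{X}_s^k(\delta)$ necessarily has vanishing correction term $\mathcal{C}(\varphi,u)=0$ and hence is a fixed point of $K$ (this follows by projecting \eqref{Duhamel} onto the unstable mode and using decay, but it only gives uniqueness within $\mathcal{X}_s^k(\delta)$), or, as the paper does, (b) run a local-in-time Gronwall/bootstrap argument directly on the difference of two putative $C([0,\infty),X_s^k)$-solutions with the same initial datum, using the semigroup bound $\|S_{X_s^k}(\tau)\|\leq Me^{\tau}$ and the local Lipschitz property of $\mathcal{N}$ to rule out any other solution. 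This last step is standard but genuinely needed to match the statement as given.
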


\begin{proof}
Let $\varphi_0 \in X_s^k(\R^n)$ be such that $\Vert \varphi_0 \Vert_{X_s^k(\R^n)} \leq \frac{\delta}{M^2}$. Using estimate \eqref{Initialestimate} in Lemma \ref{LemmaInitialdata} we  choose $M>0$ sufficiently large to obtain
\begin{align*}
\Vert \mathcal{U}(\varphi_0, T) \Vert_{X_s^k(\R^n)} \leq \frac{\delta}{c}, 
\end{align*}
for all $T \in [1- \frac{\delta}{M}, 1+ \frac{\delta}{M} ]$ with the constant $c>0$ from Lemma \ref{ContractionofK}. Applying Banach's fixed point theorem we infer that for every $T \in [1- \frac{\delta}{M}, 1+ \frac{\delta}{M} ]$, there exists a unique solution $\varphi_T \in \mathcal{X}_s^k(\delta)$ to the equation
\begin{align*}
\varphi(\tau) = [K(\varphi, \mathcal{U}(\varphi_0, T))](\tau), \quad \tau \geq 0.
\end{align*}
Furthermore, the so defined map $T \mapsto \varphi_T$ is continuous. To see this, note that we have
\begin{align*}
\Vert \varphi_{T_1} - \varphi_{T_2} \Vert_{\mathcal{X}_s^k} &\leq \Vert K(\varphi_{T_1},\mathcal{U}(\varphi_0, T_1)) -  K(\varphi_{T_1},\mathcal{U}(\varphi_0, T_2)) \Vert{\mathcal{X}_s^k}\\
&+ \Vert K(\varphi_{T_1},\mathcal{U}(\varphi_0, T_2)) - K(\varphi_{T_2},\mathcal{U}(\varphi_0, T_2)) \Vert{\mathcal{X}_s^k}\\
&\leq \Vert \mathcal{U}(\varphi_0, T_1) - \mathcal{U}(\varphi_0, T_2) \Vert_{X_s^k(\R^n)} + \frac{1}{2} \Vert \varphi_{T_1} - \varphi_{T_2} \Vert_{\mathcal{X}_s^k},
\end{align*}
which together with Lemma \ref{LemmaInitialdata} implies the claim.\\
\\
To conclude existence of a solution to \eqref{Duhamel}, we need to find a $T = T(\varphi_0)$ such that the correction term vanishes, i.e. $\mathcal{C}(\varphi_{T(\varphi_0)}, \mathcal{U}(\varphi_0, T)) =0$. Note that this is equivalent to
\begin{align} \label{Equitozero}
\left \langle \mathcal{P}_{X_s^k} \mathcal{U}(\varphi_0, T) + \int_0^{\infty} e^{-\tau^{\prime}} \mathcal{N}(\varphi_{T(\varphi_0)}(\tau^{\prime})) d \tau^{\prime}, G \right \rangle_{\mathcal{H}} =0.
\end{align}
A Taylor expansion in $T=1$ reveals that 
\begin{align*}
\mathcal{U}(\varphi_0, T) = \sqrt{T} \varphi_0(\sqrt{T} \cdot) + C (T-1) G + (T-1)^2 R(T, \cdot),
\end{align*}
for some $C \neq 0$, where the remainder is continuous in $T$ and satisfies $\Vert R(T, \cdot) \Vert_{X_s^k(\R^n)} \lesssim 1$ for all $T \in [1- \frac{\delta}{M}, 1+ \frac{\delta}{M}]$. With this at hand we obtain
\begin{align*}
\langle \mathcal{P}_{X_s^k} \mathcal{U}(\varphi_0, T), G \rangle_{\mathcal{H}} = C(T-1) + f(T),
\end{align*}
for a continuous function $f$ satisfying $|f(T)| \lesssim \frac{\delta}{M^2} + \delta^2$ by the condition on the initial data and $T$. Lemma \ref{Nonlinearityestimate} implies that \eqref{Equitozero} is equivalent to 
\begin{align}\label{fixedpointT}
T= F(T) +1
\end{align} for a suitable function $F$, that is continuous and satisfies $|F(T)| \lesssim \frac{\delta}{M^2} + \delta^2$. By choosing $M$ sufficiently large and $\delta$ sufficiently small we infer $|F(T)| \leq \frac{\delta}{M}$, which in turn implies that the map $T \mapsto F(T) +1$ maps the interval $[1- \frac{\delta}{M}, 1+ \frac{\delta}{M}]$ into itself. The Intermediate Value Theorem  now implies the existence of a solution to \eqref{fixedpointT}, which determines $\varphi_T$ solving to \eqref{Duhamel}. By construction, $\Vert \varphi_T(\tau) \Vert_{X_s^k(\R^n)} \leq \delta e^{- \omega \tau}$ for all $\tau \geq 0$.

To show uniqueness, let $\tilde \varphi \in C([0, \infty), X_s^k(\R^n))$ be another solution to \eqref{Duhamel}, $\varphi_T \neq \tilde \varphi $. Then, by the fact that both solutions exhibit the same initial data, there exists $\varepsilon \in (0, \frac{1- \delta}{2})$ and $\tau_0 > 0$ such that
\begin{align} \label{Contradicting}
\Vert \varphi_T(\tau_0) - \tilde \varphi (\tau_0) \Vert_{X_s^k(\R^n)} > \varepsilon,
\end{align}
and 
\begin{align*}
\Vert \varphi_T(\tau) - \tilde \varphi(\tau) \Vert_{X_s^k(\R^n)} <2 \varepsilon \quad \text{for $\tau \in [0, \tau_0]$}.
\end{align*}
We obtain
\begin{align*}
\Vert \tilde \varphi(\tau) \Vert_{X_s^k(\R^n)} \leq \Vert \tilde \varphi(\tau) - \varphi_T(\tau) \Vert_{X_s^k(\R^n)} + \Vert \varphi_T(\tau) \Vert_{X_s^k(\R^n)} < 2 \varepsilon + \delta e^{- \omega \tau} < 1,
\end{align*}
for $\tau \in [0, \tau_0]$. For the semigroup $( S_{X_s^k}(\tau))_{\tau \geq 0}$, we have the bound  $\|S_{X_s^k}(\tau)\| \leq M e^{\tau}$ for $\tau \geq 0$ and some $M > 0$.  Together with Lemma \eqref{Nonlinearityestimate} 
\begin{align*}
\Vert \varphi_T(\tau) - \tilde \varphi(\tau) \Vert_{X_s^k(\R^n)} \lesssim (e^{ \tau} -1) \sup_{\tau^{\prime} \in [0, \tau]} \Vert \varphi_T(\tau^{\prime}) - \tilde \varphi(\tau^{\prime}) \Vert_{X_s^k(\R^n)},
\end{align*}
for $\tau \in [0, \tau_0]$. By a suitable choice of $\tau_1 \in (0, \tau_0]$ we obtain
\begin{align*}
\sup_{\tau \in [0, \tau_1]} \Vert \varphi_T(\tau) - \tilde \varphi(\tau) \Vert_{X_s^k(\R^n)} \leq \frac{1}{2} \sup_{\tau \in [0, \tau_1]} \Vert \varphi_T(\tau) - \tilde \varphi(\tau) \Vert_{X_s^k(\R^n)},
\end{align*}
which shows $\varphi_T = \tilde \varphi$ in $[0, \tau_1]$. Iterating this argument yields $\varphi_T = \tilde \varphi$ in $[0, \tau_0]$, which contradicts \eqref{Contradicting}.
\end{proof}

\begin{remark}
The unique strong solution resulting from Theorem \ref{Theoremsolution} is real-valued, since the subspace of real-valued functions in $X_s^k(\R^n)$ is invariant under the action of $S_{X_s^k}$ and $\mathcal{P}_{X_s^k}$.
\end{remark}

\subsection{Classical solutions}

In this section, we show that initial data belonging to the Schwartz space generates classical solutions to \eqref{Centralproblem}, which are furthermore smooth.

\begin{proposition} \label{Upgrade}
If $\varphi_0$ from Theorem \ref{Theoremsolution} belongs to $\mc S_r(\R^n)$, then the unique strong solution $\varphi$ to \eqref{Duhamel} belongs to $C^{\infty}([0, \infty) \times \R^n)$ and satisfies \eqref{Centralproblem} in the classical sense.
\end{proposition}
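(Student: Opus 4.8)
The plan is a parabolic bootstrap: first promote the spatial regularity of the strong solution from Theorem \ref{Theoremsolution} to arbitrarily high Sobolev order, then obtain joint smoothness in $(\tau,y)$ from interior parabolic regularity, and finally read off that the Duhamel identity \eqref{Duhamel} forces \eqref{Centralproblem} pointwise.

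\emph{Step 1: persistence of higher Sobolev regularity.} Since $\varphi_0 \in \mc S_r(\R^n)$, it lies in $X_s^{k'}(\R^n)$ for every integer $k' \geq k$, with a finite (though possibly large) norm. I would check that all the auxiliary results feeding into Theorem \ref{Theoremsolution} remain valid with $k$ replaced by such a $k'$: the pair $(s,k')$ still satisfies \eqref{sk-range}; the algebra property (Lemma \ref{embeddinglemma}), the potential estimate (Lemma \ref{EstimateVf}), the decay of the semigroup on the stable subspace (Proposition \ref{Semigroupdecaypropo}), the Lipschitz estimate for $\mc N$ (Lemma \ref{Nonlinearityestimate}, whose proof rests on \eqref{Eq:Schauder} with the fixed index $k_1=\lfloor \tfrac n2-2\rfloor$), and the bound on the data operator (Lemma \ref{LemmaInitialdata}, using that $\mc U(\varphi_0,T)$ is smooth and decays like $\langle\cdot\rangle^{-3}$, hence lies in $X_s^{k'}$ by Lemma \ref{InXsk}) all carry over. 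Thus Lemma \ref{ContractionofK} and Theorem \ref{Theoremsolution} hold in $X_s^{k'}(\R^n)$, producing a strong solution $\varphi^{(k')}\in C([0,\infty),X_s^{k'}(\R^n))$ with blowup time $T^{(k')}$; since $X_s^{k'}\hookrightarrow X_s^k$, $\varphi^{(k')}$ is also a strong solution in $X_s^k(\R^n)$, so the uniqueness assertion of Theorem \ref{Theoremsolution} (and uniqueness of the blowup time for fixed small data) yields $\varphi^{(k')}=\varphi$ and $T^{(k')}=T$. Hence $\varphi(\tau,\cdot)\in\bigcap_{k'\geq k}X_s^{k'}(\R^n)$ for every $\tau\geq 0$, with $\sup_{\tau\geq0}e^{\omega\tau}\Vert\varphi(\tau)\Vert_{X_s^{k'}}<\infty$; in particular, by Sobolev embedding, $\varphi(\tau,\cdot)\in C^\infty(\R^n)$ for each fixed $\tau$, and $\varphi$ together with all its spatial derivatives is bounded on each strip $[0,\tau_0]\times\R^n$.

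\emph{Step 2: joint smoothness and the classical equation.} The mild solution $\varphi$ of \eqref{Duhamel} is, a fortiori, a distributional solution of the linear heat equation $\partial_\tau\varphi-\Delta\varphi=h$ on $[0,\infty)\times\R^n$, where $h(\tau,y):=-\tfrac12\big(y\cdot\nabla\varphi(\tau,y)+\varphi(\tau,y)\big)+V(y)\varphi(\tau,y)+[\mc N(\varphi(\tau,\cdot))](y)$, with $V$ from \eqref{potential} and $\mc N$ from \eqref{Nonlinearity}. On any bounded cylinder $I\times B_R$ with $\bar I\subset(0,\infty)$, Step 1 shows that $h$ is continuous in $\tau$ and as regular in $y$ as we please: the drift coefficient $\tfrac12 y$ is smooth on $B_R$, $V\in C^\infty(\R^n)$, and $y\mapsto[\mc N(\varphi(\tau,\cdot))](y)$ is smooth because the singular prefactor $|y|^{-3}$ in \eqref{Nonlinearity} is compensated by the third-order vanishing of the bracket at $y=0$ (the Taylor identity from the proof of Lemma \ref{Nonlinearityestimate}), leaving a function of $|y|^2$. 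Standard interior parabolic estimates ($L^p$ theory followed by Schauder), bootstrapped in the Hölder exponent, then upgrade $\varphi$ to $C^\infty(I\times B_R)$, and covering $(0,\infty)\times\R^n$ gives $\varphi\in C^\infty((0,\infty)\times\R^n)$. Smoothness up to $\tau=0$ follows since $\varphi(0,\cdot)=\mc U(\varphi_0,T)$ is smooth with the decay above, so both $S_{X_s^k}(\tau)\mc U(\varphi_0,T)$ and the Duhamel integral in \eqref{Duhamel} are smooth up to $\tau=0$; hence $\varphi\in C^\infty([0,\infty)\times\R^n)$. Finally, rearranging $\partial_\tau\varphi-\Delta\varphi=h$ with $L=\Delta-\Lambda+V$ and the definition of $h$ gives $\partial_\tau\varphi=L\varphi+\mc N(\varphi)$, which is exactly \eqref{Centralproblem} in the classical sense; real-valuedness is inherited as noted after Theorem \ref{Theoremsolution}.

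\emph{Main obstacle.} The delicate part is Step 2's bookkeeping. One must justify that the abstract mild solution is genuinely a distributional — and then, via the bootstrap, classical — solution of the PDE, and handle the two non-standard features of the right-hand side: the linearly growing drift coefficient $\tfrac12 y$ (harmless on bounded cylinders but precluding any global $L^\infty$ control, which is why the regularity argument must be localized in space) and the factor $|y|^{-3}$ in $\mc N$, whose cancellation against $F(|y|\phi+|y|\varphi)$ near the origin must be made quantitative. The non-integer exponent $s$, by contrast, plays no role here: throughout the bootstrap it is dominated by the integer exponent $k'$ and merely tags along as the fixed lowest-order part of the norm.
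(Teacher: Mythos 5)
Your Step 1 has a genuine gap. Theorem \ref{Theoremsolution} (via Lemma \ref{ContractionofK}) is a \emph{global-in-$\tau$} contraction mapping argument and therefore requires the smallness $\Vert\varphi_0\Vert_{X_s^{k'}(\R^n)}\leq \delta_{k'}/M_{k'}^{2}$, where $\delta_{k'},M_{k'}$ depend on $k'$ through the implied constants in Lemma \ref{EstimateVf}, Proposition \ref{Semigroupdecaypropo}, the algebra estimate, and the Schauder estimate \eqref{Eq:Schauder} underlying Lemma \ref{Nonlinearityestimate}. The hypothesis only furnishes smallness of $\Vert\varphi_0\Vert_{X_s^{k}}$; a \emph{fixed} Schwartz function $\varphi_0$ will generically not satisfy the (typically shrinking) thresholds in $X_s^{k'}$ simultaneously for all $k'\geq k$, since its higher-order Sobolev norms grow. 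So you cannot rerun the fixed-point argument in $X_s^{k'}$ to produce a global solution $\varphi^{(k')}$ and then identify it with $\varphi$ by uniqueness. Because Step 2 explicitly invokes the conclusion of Step 1 (``as regular in $y$ as we please''), the gap propagates.

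The paper avoids this by a \emph{linear} gain-of-regularity argument rather than a nonlinear contraction: it rewrites \eqref{Duhamel} using the free semigroup $S_0$ with $L_1\varphi+\mc N(\varphi)$ on the right-hand side, and uses the smoothing bound $\Vert S_0(\tau)f\Vert_{X_s^{k+1}(\R^n)}\lesssim e^{\frac{1}{2}(\frac{n}{2}-1-s)\tau}\kappa(\tau)^{-\frac{1}{2}}\Vert f\Vert_{X_s^{k}(\R^n)}$, with $\kappa(\tau)^{-1/2}$ integrable near $\tau=0$, to gain one spatial derivative per iteration \emph{on the already-constructed solution}, with no smallness needed at the higher level. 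Time regularity then comes not from interior parabolic theory but from the abstract semigroup fact that a mild solution with initial data in $\mathcal{D}(\mathcal{L}_{X_s^k})$ and a locally Lipschitz nonlinearity is a classical (semigroup) solution, combined with Schwarz's theorem to exchange $\partial_\tau$ with $\mathcal{L}$. Two smaller remarks on Step 2: your handling of regularity up to $\tau=0$ via the Duhamel integral is circular as written, since the integrand $\mc N(\varphi(\tau'))$ near $\tau'=0$ has exactly the regularity you are trying to establish (the $S_0$-smoothing argument handles $\tau=0$ cleanly because the gain is uniform on compact time intervals); on the other hand, your observation that the $|y|^{-3}$ factor in $\mc N$ is compensated by the third-order vanishing of the bracket is correct and is a point the paper's write-up does not spell out.
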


\begin{proof}
The regularity of $\mathcal{U}(\varphi_0, T)$ implies $\mathcal{U}(\varphi_0, T) \in \mathcal{D}(\mathcal{L}_{X_s^k})$, which together with the locally Lipschitz continuity of $\mathcal{N}$ yields that the unique strong solution $\varphi$ to \eqref{Duhamel} is indeed a classical solution (see, e.g., \cite{Cazenave}, p.~60, Proposition 4.3.9). This means $\varphi \in C([0, \infty), \mathcal{D}(\mathcal{L}_{X_s^k})) \cap C^1([0, \infty), X_s^k(\R^n))$ and $\varphi$ solves 
\begin{align} \label{solutionequation}
\partial_{\tau} \varphi(\tau) = \mathcal{L} \varphi(\tau) + \mathcal{N}(\varphi(\tau)), \quad \text{for $\tau \geq 0$},
\end{align}
in $X_s^k(\R^n)$. By the continuous embedding $X_s^k(\R^n) \hookrightarrow L^{\infty}(\R^n)$ (see Lemma \ref{embeddinglemma}) we infer that \eqref{solutionequation} holds pointwise. As the operator $\mathcal{L}$ can be decomposed as $\mathcal{L} = \mathcal{L}_0 + L_1$ with $L_1 $ bounded on $X_s^k(\R^n)$, it follows that $\varphi$ satisfies 
\begin{align*}
\varphi(\tau) = S_0(\tau) \mathcal{U}(\varphi_0, T) + \int_0^{\tau} S_0(\tau - \tau^{\prime}) (L_1 \varphi(\tau^{\prime}) + \mathcal{N}(\varphi(\tau^{\prime}))) d \tau^{\prime}, \quad \text{for $\tau \geq 0$}.
\end{align*}
The smoothing properties of the free semigroup, see Appendix \ref{Smoothing_S0}, now imply that  $\varphi(\tau) \in C^{\infty}(\R^n)$ for all $\tau \geq 0$. 

To establish higher regularity in $\tau$ we apply a generalized version of Schwarz's theorem (see, e.g., \cite{Rudin}, p.~235, Theorem 9.41) which allows to interchange the operators $\partial_{\tau}$ and $\mathcal{L}$ applied to $\varphi$. Hence, mixed derivatives of all orders exist, which shows $\varphi \in C^{\infty}([0, \infty) \times \R^n)$.
\end{proof}

\begin{proof}[Proof of Theorem \ref{maintheorem}]
Let $\eta_0 \in \mc S(\R^d)$ with $\eta_0(x) = x \varphi_0$ for a radial Schwartz function $\varphi_0 \in \mc S_r(\R^d)$, $\varphi_0 = \tilde \varphi_0(|\cdot|)$. By
using the representation of the Fourier transform of radial functions, an explicit computation (see \cite{HyperbolicYM}, Proposition A.5), shows that 
\[ \sum_{i=1}^{d} \Big  |[\mc F_{d} \eta_{0,i}](\xi)|^2 \simeq  |\xi|^2 \Big | \big [\mc F_{d+2} \tilde \varphi_0(|\cdot| ) \big ](|\xi|)\Big |^2. \] 
In particular, $\varphi_0$ can also be considered as a radial Schwartz function on $\R^n$ and 
\[ \| \eta_0  \|_{\dot{H}^{s} \cap \dot{H}^{k}(\R^d)}  \simeq \|\tilde \varphi_0(|\cdot| )\|_{\dot{H}^{s} \cap \dot{H}^{k}(\R^{d+2})}. \]
Hence, we can choose $\varepsilon >0$ small enough to guarantee $\Vert \varphi_0 \Vert_{X_s^k(\R^n)} \leq \frac{\delta}{M^2}$, where $\delta, M >0$ are the constants from Theorem \ref{Theoremsolution}. Existence and uniqueness of a strong solution $\varphi \in C([0, \infty), X_s^k(\R^n))$ to \eqref{Duhamel} for some $T \in [1- \frac{\delta}{M}, 1+ \frac{\delta}{M}]$ follows by Theorem \ref{Theoremsolution}.  Proposition \ref{Upgrade} implies $\varphi \in C^{\infty}([0, \infty) \times \R^n)$, solves \eqref{Centralproblem} in the classical sense. Moreover,  $\varphi(\tau,\cdot) = \tilde \varphi(\tau,|\cdot|)$ and by \eqref{estimateforvarphi}
\[  \| \varphi(\tau) \|_{X_s^k(\R^n)} \lesssim \delta e^{- \omega \tau} \] 
for all $\tau \geq 0$. To return to the original variables, note that  
\[\psi(\tau, y): = \phi(|y|) + \varphi(\tau, y)\]
is a classical solution to \eqref{newequation}, that belongs to $C^{\infty}([0, \infty) \times \R^n)$. Furthermore, $\psi$ is radial,  $\psi(\tau,y) = \tilde \psi(\tau,|y|)$, and with 
\[v(t,r) :=  \tfrac{1}{\sqrt{T-t}} \tilde \psi \Big ( \ln \bigl( \tfrac{T}{T-t} \bigl), \tfrac{r}{\sqrt{T-t}} \Big )  =  v_{T}(t,r) + \tilde \varphi \Big (\ln \bigl( \tfrac{T}{T-t} \bigl), \tfrac{r}{\sqrt{T-t}}\Big ) \]
we obtain a solution to \eqref{Eq:CorHMHF} for initial data 
\[ v_0(r) =\phi(r) + \tilde \varphi_0(r). \]
Finally, $U \in C^{\infty}([0,T) \times \R^d)$ defined by 
\begin{align*}
U(t,x) : = x v(t, |x|) 
\end{align*}
solves the original problem \eqref{HMHF} with initial condition $U_0 = \Phi + \eta_0$, where $\Phi(x) = x \phi(|x|)$. Furthermore, we have the decomposition
\begin{align*}
U(t,x) = \tfrac{x}{\sqrt{T-t}} \Big ( \phi \left ( \tfrac{|x|}{\sqrt{T-t}} \right ) + \tilde \varphi  \left ( \ln ( \tfrac{T}{T-t} ), \tfrac{|x|}{\sqrt{T-t}} \right )  \Big ),
\end{align*}
Setting 
\[ \eta \left( t, \tfrac{x}{\sqrt{T-t}} \right) :=  \tfrac{x}{\sqrt{T-t}} \tilde \varphi  \left ( \ln ( \tfrac{T}{T-t} ), \tfrac{|x|}{\sqrt{T-t}} \right )\]
yields \eqref{Main:Sol_Decomp}. Again, by \cite{HyperbolicYM}, Proposition A.5 we infer that 
\begin{align*}
\Vert \eta(t, \cdot) \Vert_{_{\dot{H}^{s} \cap \dot{H}^{k}(\R^d)}} \simeq \left \Vert \varphi \left ( \ln ( \tfrac{T}{T-t} ), \cdot \right ) \right \Vert_{X_s^k(\R^n)} \lesssim \delta e^{-\omega \ln \bigl( \frac{T}{T-t} \bigl)} \lesssim (T-t)^{\omega},
\end{align*}
for $t \in [0,T)$. This implies $\Vert \eta(t, \cdot) \Vert_{\dot{H}^{r}(\R^d)}  \rightarrow 0$ as $t \rightarrow T^{-}$ for $r \in [s,k]$. 
\end{proof}

\appendix
\section{}

\subsection{Proof of Lemma \ref{embeddinglemma}} \label{Embeddingproof}
We first note that in view of the exponential decay of the weight function $\sigma(x) = e^{-\frac{|x|^2}{4}}$ one has the continuous embedding $L^{\infty}(\R^n) \hookrightarrow \mathcal{H}$. So it suffices to show that $X^k_s(\R^n)$ can be continuously embedded into $L^{\infty}(\R^n)$. We have
\begin{align*}
\Vert f \Vert_{L^{\infty}(\R^n)} &\lesssim \Vert \mathcal{F}f \Vert_{L^1(B_1)} + \Vert \mathcal{F}f \Vert_{L^1(B^c_1)} \\
& \lesssim \Vert | \cdot |^{-s} \Vert_{L^2(B_1)} \Vert | \cdot |^s \mathcal{F}f \Vert_{L^2(B_1)} + \Vert | \cdot |^{-k} \Vert_{L^2(B^c_1)} \Vert | \cdot |^k \mathcal{F}f \Vert_{L^2(B^c_1)} \\
& \lesssim \Vert f \Vert_{X_s^k(\R^n)},
\end{align*}
for all $f \in C^{\infty}_{c,r}(\R^n)$. Let $f \in X_s^k(\R^n)$, then there exists a sequence $(f_j)_j \subseteq C^{\infty}_{c,r}(\R^n)$ such that $f_j \rightarrow f$ in $X_s^k(\R^n)$. The above inequality implies that $(f_j)_j$ is a Cauchy sequence in $L^{\infty}(\R^n)$, this means the limit $h := \lim_{j \rightarrow \infty} f_j$ in $L^{\infty}(\R^n)$ exists. This defines a map $i: X_s^k(\R^n) \rightarrow L^{\infty}(\R^n)$, $i(f)=h$. To have an embedding $i$ must be injective.\\
Let $f \in X_s^k(\R^n)$ be such that $i(f)=0$. Then there exists a sequence $(f_j)_j \subseteq C^{\infty}_{c,r}(\R^n)$ with $f_j \rightarrow f$ in $X_s^k(\R^n)$ and $f_j \rightarrow 0$ in $L^{\infty}(\R^n)$. For a Schwartz function $\varphi \in \mc S(\R^n)$ we have
\begin{align*}
| \langle | \cdot |^s \mathcal{F}(f_j), \varphi \rangle_{L^2(\R^n)} | &= | \langle f_j, \mathcal{F}^{-1} (| \cdot |^s \varphi ) \rangle_{L^2(\R^n)} |\\
&\leq \Vert f_j \Vert_{L^{\infty}(\R^n)} \Vert \mathcal{F}^{-1} (| \cdot |^s \varphi ) \Vert_{L^1(\R^n)} \ \rightarrow 0 \quad \text{as $j \rightarrow \infty$}.
\end{align*}
Together with the density of $\mc S(\R^n)$ in $L^2(\R^n)$ we infer $| \cdot |^s \mathcal{F}(f_j) \rightharpoonup 0$ in $L^2(\R^n)$ as $j \rightarrow \infty$. The same holds for the sequence $(| \cdot |^k \mathcal{F}(f_j))_j$. But the strong limit $f_j \rightarrow f$ in $X_s^k(\R^n)$ implies in particular the weak limits $| \cdot |^s \mathcal{F}(f_j) \rightharpoonup f$ and $| \cdot |^k \mathcal{F}(f_j) \rightharpoonup f$ in $L^2(\R^n)$, which is a contradiction to the uniqueness of weak limits.

\subsection{Alternative proof of the algebra property in $X_s^k(\R^n)$}
Let  $s,k \geq 0$ satisfy the conditions  \eqref{sk-range}. Since $X_s^k(\R^n)$ is a space of radial functions, we can give an elementary proof that $X_s^k(\R^n)$ is closed under multiplication, i.e., that we have
\begin{align*}
\Vert f h \Vert_{X_s^k(\R^n)} \lesssim \Vert f \Vert_{X_s^k(\R^n)} \Vert h \Vert_{X_s^k(\R^n)},
\end{align*}
for all $f,h \in X_s^k(\R^n)$. We first observe
\begin{align*}
\Vert f h \Vert_{X_s^k(\R^n)} \lesssim \Vert f h \Vert_{\dot{H}^{\lfloor s \rfloor} \cap \dot{H}^{k}(\R^n)},
\end{align*}
for all $f,h \in C^{\infty}_{c,r}(\R^n)$, where $\lfloor s \rfloor = \lfloor \frac{n}{2} \rfloor -1$. Next we use the following equivalence of norms for integer exponents $l \in \N_0$ 
\begin{align*}
\Vert f h \Vert_{\dot{H}^{l}} \simeq \sum_{\substack{\alpha \in \N_0^n \\ |\alpha| = l}} \Vert \partial^{\alpha} (fh) \Vert_{L^2(\R^n)} &\leq \sum_{\substack{\alpha \in \N_0^n \\ |\alpha| = l}} \sum_{\substack{\beta \in \N_0^n \\ \beta \leq \alpha}} \binom{\alpha}{\beta} \Vert \partial^{\beta} f \partial^{\alpha-\beta} h \Vert_{L^2(\R^n)},
\end{align*}
where $f, h \in C^{\infty}_{c,r}(\R^n)$. Hence, we estimate
\begin{align} \label{Inequalityfh}
\Vert \partial^{\beta} f \partial^{\alpha-\beta} h \Vert_{L^2(\R^n)} \leq \Vert | \cdot |^{-m} \partial^{\beta} f \Vert_{L^2(\R^n)} \Vert | \cdot |^{\frac{n}{2}-(\frac{n}{2} -m)} \partial^{\alpha-\beta} h \Vert_{L^{\infty}(\R^n)},
\end{align}
for $m \geq 0$ by H\"older's inequality. For $l=\lfloor s \rfloor$, we choose $m=s-|\beta|$. To control weighted derivatives in $L^{\infty}(\R^n)$, we use  \cite{WMglobal}, Proposition B.1 and apply Hardy's inequality to get
\begin{align*}
\Vert \partial^{\beta} f \partial^{\alpha-\beta} h \Vert_{L^2(\R^n)} &\leq \Vert | \cdot |^{-s + |\beta|} \partial^{\beta} f \Vert_{L^2(\R^n)} \Vert | \cdot |^{\frac{n}{2} - (\frac{n}{2}- s + |\beta|)} \partial^{\alpha-\beta} h \Vert_{L^{\infty}(\R^n)}\\
&\lesssim \Vert f \Vert_{\dot{H}^s(\R^n)} \Vert h \Vert_{\dot{H}^{\lfloor s \rfloor +\frac{n}{2}-s}(\R^n)}\\
&\lesssim \Vert f \Vert_{X_s^k(\R^n)} \Vert h \Vert_{X_s^k(\R^n)},
\end{align*}
as $\frac{1}{2} < \frac{n}{2} - s + |\beta| < \frac{n}{2}$ and $s \leq \lfloor s \rfloor + \frac{n}{2}-s \leq k$. \\
For $l=k$ one needs to handle the terms $(\partial^{\alpha} f) h$ and $f( \partial^{\alpha} h)$ appearing in the Leibniz rule separately. We have
\begin{align*}
\Vert (\partial^{\alpha} f) h \Vert_{L^2(\R^n)} \leq \Vert \partial^{\alpha} f \Vert_{L^2(\R^n)} \Vert h \Vert_{L^{\infty}(\R^n)} \lesssim \Vert f \Vert_{X_s^k(\R^n)} \Vert h \Vert_{X_s^k(\R^n)},
\end{align*}
by the continuous embedding $X_s^k(\R^n) \hookrightarrow L^{\infty}(\R^n)$ (see Lemma \ref{embeddinglemma}). The same holds for $f( \partial^{\alpha} h)$. Next, if $1 \leq |\beta| \leq k-1$ we can restrict ourself to $1 \leq |\beta| \leq \frac{k-1}{2}$ by symmetry and use inequality \eqref{Inequalityfh} with $m=\frac{n-2}{2}$ to get
\begin{align*}
\Vert \partial^{\beta} f \partial^{\alpha-\beta} h \Vert_{L^2(\R^n)}
&\lesssim \Vert f \Vert_{\dot{H}^{|\beta|+\frac{n-2}{2}}(\R^n)} \Vert h \Vert_{\dot{H}^{k - |\beta| +1}(\R^n)}\\
&\lesssim \Vert f \Vert_{X_s^k(\R^n)} \Vert h \Vert_{X_s^k(\R^n)},
\end{align*}
by Hardy's inequality, Proposition B.1 in \cite{WMglobal}, and interpolation.

\subsection{Proof of Lemma \ref{InXsk}} \label{Prooflemma42}
Let $f \in C^{\infty}_r(\R^n)$ such that $|\partial^{\alpha} f(x)| \lesssim \langle x \rangle^{-1-|\alpha|}$ holds for $\alpha \in \N_0^n$, $|\alpha| \leq k$ and all $x \in \R^n$. Let $\chi \in C^{\infty}_{c,r}(\R^n)$, $\chi \geq 0$ be a cutoff function satisfying 
\begin{equation}
\left\{
\begin{aligned} \chi(x) &= 1, \quad |x| \leq 1,\\
\chi(x) &= 0, \quad |x| \geq 2.
\end{aligned}
\right.
\end{equation}
We define the sequence $(f_j)_j \subset C^{\infty}_{c,r}(\R^n)$ by $f_j(x) := f(x) \chi_j(x)$ for $x \in \R^n$ and $j \in \N$, where $\chi_j(x) := \chi \bigl( \frac{x}{j} \bigl)$. Obviously, this sequence of functions converges to $f$ in $L^{\infty}(\R^n)$, as we have
\begin{align*}
\Vert f - f_j \Vert_{L^{\infty}(\R^n)} = \Vert f (1- \chi \bigl( \tfrac{\cdot}{j} \bigl) )\Vert_{L^{\infty}(\R^n \setminus B_j^n)} \leq \Vert f \Vert_{L^{\infty}(\R^n \setminus B_j^n)} \rightarrow 0, \quad \text{as $j \rightarrow \infty$}.
\end{align*}
Now we use the fact that 
\[ \| f \|_{\dot H^s(\R^n)} \lesssim  \| f \|_{\dot{W}^{\lceil s \rceil, p}(\R^n)} \simeq \sum_{\substack{\alpha \in \N_0^n \\ |\alpha| = \lceil s \rceil}} \Vert \partial^{\alpha} f \Vert_{L^p(\R^n)}, \]
for $\frac{1}{p} = \frac{1}{2} + \frac{\lceil s \rceil - s}{n} $, 
see also \eqref{WspEmb}. By the decay assumption on $f$ one finds that the sequence $(f_j)_j$ is a  Cauchy sequence in $\dot{W}^{\lceil s \rceil, p} (\R^n)$, if and only if $\frac{n}{2}-1 <s$, which is exactly our condition on $s$. In the same way we infer that $(f_j)_j$ is a Cauchy sequence of radial functions in $\dot{H}^k(\R^n)$ and thus also in $X_s^k(\R^n)$. As $X_s^k(\R^n)$ is complete, there exists $\tilde{f} \in X_s^k(\R^n)$ with $\lim_{j \rightarrow \infty} f_j = \tilde{f}$ in $X_s^k(\R^n)$. By the continuous embedding $X_s^k(\R^n) \hookrightarrow L^{\infty}(\R^n)$, see Lemma \ref{embeddinglemma}, we conclude $\tilde{f} =f$, which shows $f \in X_s^k(\R^n)$.

\subsection{Smoothing property of the free semigroup}\label{Smoothing_S0}

Recall that $\varphi$ from Proposition \ref{Upgrade} satisfies 
\begin{align} \label{DuhamelS0}
\varphi(\tau) = S_0(\tau) \mathcal{U}(\varphi_0, T) + \int_0^{\tau} S_0(\tau - \tau^{\prime}) (L_1 \varphi(\tau^{\prime}) + \mathcal{N}(\varphi(\tau^{\prime}))) d \tau^{\prime}, \quad \text{for $\tau \geq 0$}.
\end{align}
We show that $\varphi(\tau)$ belongs to all $X_s^l(\R^n)$ for $l \geq k$ and use the continuous embedding $X_s^l(\R^n) \hookrightarrow C_r^m(\R^n)$ with $l \geq \frac{n}{2}+m$. Let $f \in C^{\infty}_{c,r}(\R^n)$. Using the explicit form $S_0(\tau)$, see  \eqref{S0}, we get 
\begin{align*}
\Vert S_0(\tau) f \Vert_{\dot{H}^{k+1}(\R^n)} &= e^{-\frac{\tau}{2}} \Vert | \cdot |^{k+1} \mathcal{F} \bigl(( H_{\kappa(\tau)} \ast f ) (e^{-\frac{\tau}{2}} \cdot) \bigl) \Vert_{L^2(\R^n)}\\
&= e^{\frac{1}{2} (\frac{n}{2}-1-(k+1))\tau} \Vert | \cdot |^{k+1} \mathcal{F} \bigl( H_{\kappa(\tau)} \ast f \bigl) \Vert_{L^2(\R^n)}\\
&\lesssim e^{\frac{1}{2} (\frac{n}{2}-1-(k+1))\tau} \Vert | \cdot |^{k} \mathcal{F}(f) \Vert_{L^2(\R^n)} \Vert | \cdot | \mathcal{F}(H_{\kappa(\tau)}) \Vert_{L^{\infty}(\R^n)}\\
&\lesssim e^{\frac{1}{2} (\frac{n}{2}-1-(k+1))\tau} \kappa(\tau)^{-\frac{1}{2}} \Vert | \cdot |^{k} \mathcal{F}(f) \Vert_{L^2(\R^n)},
\end{align*}
for $\tau > 0$. Together with \eqref{MapPropS0}, we infer that 
\begin{align*}
\Vert S_0(\tau) f \Vert_{X_s^{k+1}(\R^n)} \lesssim e^{\frac{1}{2} (\frac{n}{2}-1-s)\tau} \kappa(\tau)^{-\frac{1}{2}} \Vert f \Vert_{X_s^k(\R^n)},
\end{align*}
holds for all $f \in X_s^k(\R^n)$ by density.
By \eqref{DuhamelS0} we infer 
\begin{align*}
\Vert \varphi(\tau) \Vert_{X_s^{k+1}(\R^n)} &\leq \Vert S_0(\tau) \mathcal{U}(\varphi_0, T) \Vert_{X_s^{k+1}(\R^n)} + \int_0^{\tau} \Vert S_0(\tau - \tau^{\prime}) (L_1 \varphi(\tau^{\prime}) + \mathcal{N}(\varphi(\tau^{\prime}))) \Vert_{X_s^{k+1}(\R^n)} d \tau^{\prime}\\
&\lesssim e^{\frac{1}{2} (\frac{n}{2}-1-s)\tau} \kappa(\tau)^{-\frac{1}{2}} \Vert \mathcal{U}(\varphi_0, T) \Vert_{X_s^{k}(\R^n)}\\
&+ \int_0^{\tau} e^{\frac{1}{2} (\frac{n}{2}-1-s)(\tau - \tau^{\prime})} \kappa(\tau - \tau^{\prime})^{-\frac{1}{2}} \Vert L_1 \varphi(\tau^{\prime}) + \mathcal{N}(\varphi(\tau^{\prime})) \Vert_{X_s^k(\R^n)} d \tau^{\prime}.
\end{align*}
As $\varphi$ belongs to $C([0,\infty), X_s^k(\R^n))$ the continuity of the operators $L_1$ and $\mathcal{N}$ imply the estimate
\begin{align*}
&\int_0^{\tau} e^{\frac{1}{2} (\frac{n}{2}-1-s)(\tau - \tau^{\prime})} \kappa(\tau - \tau^{\prime})^{-\frac{1}{2}} \Vert L_1 \varphi(\tau^{\prime}) + \mathcal{N}(\varphi(\tau^{\prime})) \Vert_{X_s^k(\R^n)} d \tau^{\prime}\\
&\leq \sup_{\tilde{\tau} \in [0, \tau]} \Vert L_1 \varphi(\tilde{\tau}) + \mathcal{N}(\varphi(\tilde{\tau})) \Vert_{X_s^k(\R^n)} \int_0^{\tau} e^{\frac{1}{2} (\frac{n}{2}-1-s)(\tau - \tau^{\prime})} \kappa(\tau - \tau^{\prime})^{-\frac{1}{2}} d \tau^{\prime}.
\end{align*}
Hence, $\varphi(\tau) \in X_s^{k+1}(\R^n)$ for $\tau \geq 0$ if 
\begin{align} \label{Integral1}
\int_0^{\tau} e^{\frac{1}{2} (\frac{n}{2}-1-s)(\tau - \tau^{\prime})} \kappa(\tau - \tau^{\prime})^{-\frac{1}{2}} d \tau^{\prime} < \infty.
\end{align}
A straightforward calculation shows that the integral in \eqref{Integral1} is equal to $B(\frac{1}{2}, \frac{s}{2}-\frac{n}{4}+\frac{1}{2})$, where $B$ is the standard beta function and its value is well-defined for $s > \frac{n}{2}-1$.
We inductively infer $\varphi(\tau) \in X_s^l(\R^n)$ for all $\tau \geq 0$ and $l \geq k$, which implies the claim.\\

\emph{Acknowledgment.} The first author would like to thank David Wallauch for fruitful discussions.

\pagestyle{plain}
	\bibliography{Literatur}
	\bibliographystyle{plain}

\end{document}